\newcommand*{\mailto}[1]{\href{mailto:#1}{\nolinkurl{#1}}}
\newcommand{\msc}[1]{\href{http://www.ams.org/msc/msc2010.html?t=&s=#1}{#1}}
\newtheorem{theorem}{Theorem}[section]
\newtheorem{lemma}[theorem]{Lemma}
\newtheorem{remark}[theorem]{Remark}
\newcommand{\R}{{\mathbb R}}
\newcommand{\N}{{\mathbb N}}
\newcommand{\C}{{\mathbb C}}
\newcommand{\OO}{{\mathcal O}}
\newcommand{\loc}{\mathrm{loc}}
\newcommand{\E}{\mathrm{e}}
\DeclareMathOperator{\re}{Re}
\newcommand{\nn}{\nonumber}
\newcommand{\be}{\begin{equation}}
\newcommand{\ee}{\end{equation}}
\newcommand{\ti}{\tilde}
\newcommand{\abs}[1]{\left\lvert #1 \right\rvert}
\newcommand{\norm}[1]{\left\lVert #1 \right\rVert}
\newcommand{\ve}{\varepsilon}
\numberwithin{equation}{section}
\newcommand{\hyp}[5]{\,\mbox{}_{#1}F_{#2}\!\left(
  \genfrac{}{}{0pt}{}{#3}{#4};#5\right)}
\newcommand{\dlmf}[1]{%
\cite[%
  \def\nextitem{\def\nextitem{, }}%
  \@for \el:=#1\do{\nextitem\href{http://dlmf.nist.gov/\el}{(\el)}}%
]{dlmf}%
}
\begin{document}
\title[Tranformation Operators]{Transformation operators for spherical Schr\"odinger operators}

\author[M. Holzleitner]{Markus Holzleitner}
%\address{Faculty of Mathematics\\ University of Vienna\\
%Oskar-Morgenstern-Platz 1\\ 1090 Wien\\ Austria}
\email{\mailto{markus.holzleitner1@gmail.com}}

%\author[A. Kostenko]{Aleksey Kostenko}
%\address{Faculty of Mathematics\\ University of Vienna\\
%Oskar-Morgenstern-Platz 1\\ 1090 Wien\\ Austria}
%\email{\mailto{duzer80@gmail.com};\mailto{Oleksiy.Kostenko@univie.ac.at}}
%\urladdr{\url{http://www.mat.univie.ac.at/~kostenko/}}

%\author[G. Teschl]{Gerald Teschl}
%\address{Faculty of Mathematics\\ University of Vienna\\
%Oskar-Morgenstern-Platz 1\\ 1090 Wien\\ Austria\\ and International 
%Erwin Schr\"odinger Institute for Mathematical Physics\\ 
%Boltzmanngasse 9\\ 1090 Wien\\ Austria}
%\email{\mailto{Gerald.Teschl@univie.ac.at}}
%\urladdr{\url{http://www.mat.univie.ac.at/~gerald/}}

\thanks{{\it Research supported by the Austrian Science Fund (FWF) 
under Grant No.\ W1245.}}

\keywords{Schr\"odinger equation, transformation operator, scattering}
\subjclass[2010]{Primary \msc{35Q05}, \msc{35Q41}, \msc{34L25}; Secondary \msc{81U40}, \msc{81Q15}}

\begin{abstract}
The present work aims at obtaining estimates for transformation operators for one-dimensional perturbed radial Schr\"odinger operators. It provides more details and suitable extensions to already existing results, that are needed in other recent contributions dealing with these kinds of operators.
\end{abstract}

\maketitle

%%%%%%%%%%%%%%%%%%%%%%%%%%%%%%%%%%%%%%%%%%%%%%%%%%%%%%%%%%%%%%%%%%%%%%%%%%%%%%%
\section{Introduction}
%%%%%%%%%%%%%%%%%%%%%%%%%%%%%%%%%%%%%%%%%%%%%%%%%%%%%%%%%%%%%%%%%%%%%%%%%%%%%%%
In general, transformation and transmutation for one dimensional Schr\"odinger or Sturm-Liouville operators on the whole or the half line have a long history due to their importance in inverse spectral theory, see e.g. \cite[Page 145--163]{bos} for an overview. 
The present work deals with transformation properties for the radial Schr\"odinger operators

\begin{equation} \label{Schr}
  H:= - \frac{d^2}{dx^2} + \frac{l(l+1)}{x^2} + q(x):=H_l+q,\quad x\in \R_+:=(0,\infty),
\end{equation}
where $l \geq -\frac12$ and $q$ should satisfy some further integrability conditions mentioned later. Operators of the form \eqref{Schr} appear naturally in higher dimensional models after a separation of variables, and therefore have received considerable attention (see, e.g., \cite{ckt1}, \cite{ckt2}, \cite{cfh}, \cite{kst}, \cite{kt}, \cite{kt2}, \cite{ktc}, \cite{kty}, \cite[Section 3.7]{ok} and \cite{wdln}). It's also worthwhile mentioning, that one field of recent research is concerned about proving dispersive estimates for the related Schr\"odinger equations, c.f. \cite{HKT}, \cite{HKT2}, \cite{ktt} and \cite{kotr}. In many of these contributions the existence and precise estimates for transformation operators for $H$ are crucial. There are some rather old publications available, that aim at proving these properties for $H$: \cite{volk} is concerned with transformation operators near $0$, and \cite{soh} with the situation near $\infty$, cf. also \cite{cc, HM}. Unfortunately, we realized, that these results don't cover all the situations that are considered in the recent articles mentioned before; thus the aim of the present work is to fill this gap, i.e. to give full and detailed proofs and also to provide appropriate extensions. The work should also be seen as a useful tool to stimulate further research for topics that deal with Bessel operators of the form $H$. Now let us discuss the main theorems that we want to establish.  By $\tau$, $\tau_l$ let us denote the differential symbols corresponding to $H$, $H_l$ respectively. 
We first focus on transformation near $0$: The intention is to construct a transformation operator, that maps a solution
$\phi_l(k^2,x)$, $k^2 \in \C_+$, of the equation

\be \label{eq:Besselfree}
\tau_l \phi_l(k^2,x)=k^2 \phi_l(k^2,x), 
\ee 
to a solution $\phi(k^2,x)$ of
\be \label{eq:Bessel}
\tau \phi(k^2,x)=k^2 \phi(k^2,x), 
\ee 
such that the properties of $\phi_l$ near $x=0$ are preserved. Concerning the asymptotic behavior of these solutions $\phi_l$, we refer e.g. to \cite[Section 2]{HKT2, ktt}. We want to express this transformation operator as an integral operator and prove an estimate for it. To fix some notation, for any Lebesgue measurable set $A\subset  \R_+$, $L^p (A, w(z))$ denotes the usual weighted $L^p$ space with positive weight $w(z)$. %, i.e. the associated norm is given by 
%\marginpar{\tiny AK: $z$ is a spectral parameter\\ and usually $z$ is complex valued... \\[2mm] $\R_+=?$}
%\[
%  \norm{f}_{L^p(A, w(r))}= 
%  \begin{cases}
%  \displaystyle \left(\int_{A} w(r) |f(r)|^p dr\right)^{1/p}, & 1\le p <\infty,\\
%   \displaystyle \sup_{r\in A} w(r) |f(r)|, & p=\infty,
%   \end{cases}
%\]
%with $dr$ is the usual Lebesgue measure.
Furthermore, by $p'$ we denote the corresponding dual index, i.e. $\frac1p+\frac{1}{p'}=1$.
The main theorem of the first section now reads as follows:
\begin{theorem}\label{mainthm1}
Let $L>0$ fixed and $0<y<x\le L$. Then 
\be\label{eq:to_GL}
\phi(k^2,x) = \phi_l(k^2,x) + \int_0^x B(x,y) \phi_l(k^2,y) dy =: (I+B)\phi_l(k^2,x),
\ee
where $B:\R_+^2\to \R$ is the so-called Gelfand--Levitan kernel. Concerning the conditions on the potential $q$ for \eqref{eq:to_GL} to hold, and the estimates for $B$, we need to distinguish three cases:
\begin{enumerate}
\item If $l\geq0$, $p>1$ and  $q\in L^p_{\loc} ([0,\infty))$, then \eqref{eq:to_GL} is valid and $B$ satisfies the following estimate:
%\marginpar{\tiny AK: Is $\alpha$ an arbitrary?\\ Diff behavior in $x$ at 0 and infinity}
\be \label{eq:GLest}
\abs{B(x,y)} \le \frac{(xy)^{\frac{1}{2p'}-\alpha}}{\alpha} \norm{q}_{L^p ((0,x])} x^{2\alpha} \exp \left(\frac{\ti{C} x^{1+\frac{1}{p'}}}{\alpha} \norm{q}_{L^p ((0,x])}\right),
\ee
where $0<\alpha<\frac{1}{2p'}$.
\item If $-\frac12<l<0$, $p>\frac{1}{2l+1}$ and $q\in L^p_{\loc} ([0,\infty))$, then \eqref{eq:to_GL} and \eqref{eq:GLest} are valid for $0<\alpha<l+\frac{1}{2p'}$. 
\item If $l=-\frac12$, $2<p<\infty$ and $q\in L^p_{\loc} ([0,\infty), z^{-\frac{p}{p'}})$, then \eqref{eq:to_GL} and 
%\marginpar{\tiny AK: What about $p=\infty$?}
\begin{align} 
\abs{B(x,y)} &\le \frac{(xy)^{\frac{1}{p'}-\alpha}}{\alpha} \norm{q}_{L^p ( 0,x], z^{-\frac{p}{p'}})} x^{2\alpha} (\max(1,L))^{\frac{1}{2p'}} \nn \\
&\times \exp \left(\frac{\ti{C} (\max(1,L))^{\frac{1}{2p'}} x^{1+\frac{1}{p'}}}{\alpha} \norm{q}_{L^p ((0,x], z^{-\frac{p}{p'}})} \right) \label{eq:GLest-12}
\end{align}
hold, where $0<\alpha<-\frac12+\frac{1}{p'}$.
\end{enumerate}
The constant $\ti{C}$ depends on $l$. 
\end{theorem}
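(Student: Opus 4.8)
The plan is to construct the Gelfand--Levitan kernel $B$ via the classical successive-approximation scheme for the integral equation it must satisfy, and then to carry the $L^p$-type estimates through that iteration. First I would derive, at least formally, the integral equation for $B$: plugging the ansatz \eqref{eq:to_GL} into \eqref{eq:Bessel}, using \eqref{eq:Besselfree}, and integrating by parts, one obtains that $B$ must satisfy a Goursat-type problem whose solution can be rewritten as an integral equation of the form $B(x,y) = B_0(x,y) + \int\!\!\int K[q](x,y,\cdot,\cdot)\, B(\cdot,\cdot)$, where the inhomogeneous term $B_0$ and the kernel of the integral operator are built from $q$ and from the known solutions $\phi_l$, $\theta_l$ (or the associated free Green's function / Riemann function of $H_l$). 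The key inputs here are the explicit asymptotics and bounds for the free solutions near $x=0$, in particular the estimate $\abs{\phi_l(k^2,x)} \lesssim x^{l+1}$ and the companion bound for the second solution, which supply the powers of $x$ and $y$ appearing in \eqref{eq:GLest}.

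Next I would set up the iteration $B = \sum_{n\ge 0} B_n$, with $B_{n+1}$ obtained from $B_n$ by applying the integral operator above, and prove by induction the bound
\be
\abs{B_n(x,y)} \le \frac{(xy)^{\frac{1}{2p'}-\alpha}}{\alpha}\, x^{2\alpha}\, \frac{1}{n!}\left(\frac{\ti C\, x^{1+\frac{1}{p'}}}{\alpha}\norm{q}_{L^p((0,x])}\right)^{n} \norm{q}_{L^p((0,x])}
\ee
in case (i), and analogously in cases (ii) and (iii). The exponent $\alpha$ enters as a regularization parameter: after using H\"older's inequality to separate $\norm{q}_{L^p}$ from the remaining factor, the leftover integral is of the form $\int_0^x z^{(\text{something})-1+2\alpha\,p'/\ldots}\,dz$ and converges precisely under the stated restriction on $\alpha$ (namely $\alpha<\frac{1}{2p'}$, resp. $\alpha<l+\frac{1}{2p'}$, resp. $\alpha<-\frac12+\frac{1}{p'}$), each application contributing a factor $x^{1+1/p'}/\alpha$ and producing the $1/n!$ from the iterated radial integration. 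Summing the series gives the exponential in \eqref{eq:GLest}; absolute and uniform convergence on $0<y<x\le L$ then justifies the formal manipulations and shows $(I+B)\phi_l$ genuinely solves \eqref{eq:Bessel} with the correct behavior at $0$. Case (iii), $l=-\tfrac12$, is special because the two free solutions are no longer a clean pair of powers (a logarithm appears), which is why the weight $z^{-p/p'}$ and the extra factor $(\max(1,L))^{1/(2p')}$ show up; here I would absorb the logarithmic factor into an arbitrarily small power of $z$ at the cost of the $L$-dependent constant, reducing case (iii) to the same estimate scheme.

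The main obstacle I anticipate is the $x=0$ endpoint: the kernel of the integral operator and the inhomogeneous term $B_0$ are genuinely singular there (reflecting the $l(l+1)/x^2$ term), so the naive Hölder estimate diverges, and the whole point of introducing $\alpha>0$ is to trade a bit of the available decay $(xy)^{1/(2p')}$ for convergence of the radial integrals while keeping the bound meaningful. Getting the bookkeeping of these competing powers of $x$, $y$, and $z$ exactly right through the induction — and checking that the admissible range of $\alpha$ is nonempty precisely under the stated hypotheses on $p$ and $l$ (which is where $p>1$, $p>\frac{1}{2l+1}$, and $2<p<\infty$ come from) — is the technical heart of the argument. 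A secondary, more routine obstacle is justifying the passage from the formal Goursat derivation to a rigorous statement under the low regularity $q\in L^p_{\loc}$, which I would handle by working with the integral equation throughout and only differentiating at the end, or by a density argument approximating $q$ by smooth potentials and passing to the limit using the uniform estimates just established.
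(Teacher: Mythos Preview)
Your overall strategy matches the paper's: derive a Goursat problem for $B$, convert it (after the change of variables $z=\frac{(x+y)^2}{4}$, $s=\frac{(x-y)^2}{4}$, $u=(z-s)^l B$) into an integral equation via Riemann's method, solve by successive approximation with H\"older estimates on each iterate, and finish by approximating $q$ by smooth potentials. So the skeleton is right.

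There is, however, one point where your diagnosis of the difficulty is off, and it matters for the execution. You locate the logarithm in case~(iii) only, attributing it to the pair of free solutions at $l=-\tfrac12$. In the paper the logarithmic singularity is present in \emph{all three cases} and comes from the Riemann--Green function itself: $v_1$, $v_2$ are explicit Gauss hypergeometric functions, and their estimates (Lemma~2.1) carry a factor $\log\bigl(\frac{(\xi-z)(\eta-s)}{(z-\eta)(\xi-s)}\bigr)$ near the characteristic $z=\eta$. It is this log, not a pure power, that forces the parameter $\alpha$: the paper handles it via a three-index H\"older split with exponents $\frac{p'}{1-2\alpha p'}$, $\frac{1}{2\alpha}$, $p$, together with the elementary bound $\lVert \log z\rVert_{L^\gamma((0,\tilde z])}\le C\gamma$ (Lemma~2.4), which produces exactly the factor $\alpha^{-1}$ and the range restrictions on $\alpha$. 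Your description of the leftover integral as a simple power $\int_0^x z^{(\cdots)}dz$ would not by itself generate the correct constants or explain why $\alpha$ must be strictly positive; you need the log--H\"older mechanism in every case. Case~(iii) is special for a different reason: the exponent bookkeeping forces the weighted norm $\lVert q\rVert_{L^p((0,x],z^{-p/p'})}$ and the harmless factor $(\max(1,L))^{1/(2p')}$, not an additional logarithm.
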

An important conclusion of this theorem is, that the closer the parameter $l$ is to $-\frac12$, the more we need to restrict our assumptions on the potential $q$. Moreover, in the case $l=-\frac12$, not even boundedness of $q$ seems to be enough to guarantee the desired estimates. 
The proof of this result will be discussed in the first section.
The aim of the second section is to verify a similar result near $\infty$, i.e. establishing the following theorem, where $f(k,x)$, $f_l(k,x)$ denote the Jost solutions of the corresponding equations \eqref{eq:Besselfree}, \eqref{eq:Bessel} respectively, which satisfy $f(k,x)\sim \mathrm{e}^{\mathrm{i} kx}$ as $x \to \infty$(for details, again cf. \cite[Section 2]{HKT2, ktt}):

\begin{theorem}\label{mainthm2}
Suppose $\int_1^\infty (x+x^{l})|q(x)| dx <\infty$. Introduce
\[
\ti{\sigma}_j(x):=\int_x^\infty y^j |q(y)|dy.
\]
and let $x<y<\infty$ and $0<\beta\le \frac12$. Then 
\be\label{eq:to_Mar}
f(k,x) = f_l(k,x) + \int_x^\infty K(x,y) f_l(k,y) dy =: (I+K)f_l(k,x),
\ee
where the so-called Marchenko kernel $K:\R^2\to \R$ satisfies the following estimates:
\begin{enumerate}
\item If $l>-\frac12$, then
\[
|K(x,y)| \le C_l \left(\frac{2}{x}\right)^{l}\ti{\sigma}_0\left(\frac{x+y}{2}\right)\exp \left(C_l [\ti{\sigma}_1(x)-\ti{\sigma}_1(\frac{x+y}{2})]\right).
\]
\item If $l=-\frac12$, then
\be
|K(x,y)| \le \frac{C_{-\frac12}}{\beta} \left(\frac{2}{x}\right)^{-\frac12+\beta}\ti{\sigma}_0\left(\frac{x+y}{2}\right)\exp \left(\frac{C_{-\frac12}}{\beta}[\ti{\sigma}_1(x)-\ti{\sigma}_1(\frac{x+y}{2})]\right).
\label{eq:MAest}
\ee 
\end{enumerate}
\end{theorem}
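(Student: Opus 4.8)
The plan is to mimic the classical Marchenko construction, adapting it to the Bessel background $H_l$ rather than the free operator. The starting point is that the Jost solutions of the perturbed and unperturbed equations are related by a Volterra-type integral equation. One first writes the ansatz $f(k,x) = f_l(k,x) + \int_x^\infty K(x,y) f_l(k,y)\,dy$ and plugs it into $\tau f = k^2 f$, using $\tau_l f_l = k^2 f_l$; after integrating by parts twice and using the asymptotics $f_l(k,x)\sim \E^{\I k x}$ together with the Wronskian-type normalization, one obtains a linear integral equation for $K(x,y)$ whose kernel involves the Green's function of $H_l$ and the potential $q$. The key analytic input here is a pointwise bound on the free Bessel kernel — essentially the statement that $|g_l(x,y)|$ (the relevant combination of $f_l$ and the second solution evaluated off-diagonal) is controlled by $C_l (x/y)^{l}$ or, in the borderline case $l=-\frac12$, by $C_{-\frac12}(x/y)^{-\frac12+\beta}/\beta$ for any $0<\beta\le\frac12$; this is exactly where the extra factor $(2/x)^{l}$ resp. $(2/x)^{-\frac12+\beta}$ in the final estimates comes from, and where the case distinction becomes unavoidable.

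Next I would set up the standard successive-approximation (Neumann) series $K=\sum_{n\ge 0} K_n$ for the integral equation, with $K_0$ the inhomogeneous term, which is already of the form $c\,g_l\!\big(x,\tfrac{x+y}{2}\big)\,\ti\sigma_0\!\big(\tfrac{x+y}{2}\big)$-type up to constants. The recursion expresses $K_{n+1}$ as an integral of $K_n$ against $q$ over the region $\{(s,t): s\ge x,\ |t-s|\le \dots\}$; the natural change of variables is $u=\tfrac{x+y}{2}$, $v=\tfrac{y-x}{2}$ (so that the "light-cone" structure $x<y$ becomes $v>0$), after which the domain of integration for the $n$-th iterate is a simplex in the shifted variables. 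One then proves by induction the bound
\[
|K_n(x,y)| \le C_l\left(\frac{2}{x}\right)^{l}\ti\sigma_0\!\left(\frac{x+y}{2}\right)\,
\frac{\big(C_l[\ti\sigma_1(x)-\ti\sigma_1(\tfrac{x+y}{2})]\big)^{n}}{n!},
\]
and the analogous bound with $l$ replaced by $-\frac12+\beta$ and an extra $\beta^{-1}$ per iteration in the borderline case. Summing the series gives the exponential factor $\exp(C_l[\ti\sigma_1(x)-\ti\sigma_1(\tfrac{x+y}{2})])$ and hence the claimed estimates \eqref{eq:MAest}; the hypothesis $\int_1^\infty (x+x^l)|q|\,dx<\infty$ guarantees that $\ti\sigma_0$, $\ti\sigma_1$ are finite and that the series converges locally uniformly, which also justifies differentiating term by term and confirms that $K$ indeed solves the integral equation, so that $(I+K)f_l$ solves \eqref{eq:Bessel} with the correct asymptotics at $\infty$.

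The main obstacle I anticipate is obtaining the sharp off-diagonal bound on the free Bessel Green's kernel uniformly in $k$ (with the correct power of $x/y$ and the correct blow-up rate $\beta^{-1}$ as $l\downarrow-\frac12$), and then carefully tracking how this power interacts with the weight $y^l$ appearing in the hypothesis on $q$ so that the induction closes with a clean factorial. In the ordinary case $l>-\frac12$ this is handled by known uniform estimates for Bessel functions (as recorded in \cite{HKT2, ktt} and the DLMF); the delicate point is the endpoint $l=-\frac12$, where the relevant solutions degenerate logarithmically and one must insert the auxiliary parameter $\beta$ and absorb a factor $1/\beta$ at each step — this is the exact analogue of the phenomenon already seen in Theorem \ref{mainthm1}(iii), and the bookkeeping there should serve as a template. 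A secondary technical point is verifying that all the interchanges of integration (Fubini) in the iteration are legitimate, which again follows from the integrability hypothesis on $q$ together with the finiteness of $\ti\sigma_0$ and $\ti\sigma_1$.
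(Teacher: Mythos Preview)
Your overall architecture --- pass to characteristic coordinates $\xi=(x+y)/2$, $\eta=(y-x)/2$, obtain an integral equation, run successive approximation, prove the inductive bound
\[
|\ti w_n(\xi,\eta)| \le \frac{C_l}{2}\,\ti\sigma_0(\xi)\,\frac{\big(C_l[\ti\sigma_1(\xi-\eta)-\ti\sigma_1(\xi)]\big)^n}{n!}
\]
(with $C_l$ replaced by $C_{-1/2}/\beta$ at the endpoint), and sum to an exponential --- is exactly the paper's strategy, and your endpoint bookkeeping for $l=-\tfrac12$ is correct.

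There is, however, a genuine gap at the step where you produce the integral equation. Plugging the ansatz into $\tau f=k^2 f$ and integrating by parts twice does \emph{not} give an integral equation for $K$; it gives a Goursat problem for a hyperbolic PDE in $(x,y)$ with the singular coefficient $l(l+1)(y^{-2}-x^{-2})$ and characteristic/asymptotic boundary data (equations~\eqref{eq:K}--\eqref{eq:Kbc} in the paper). To convert this into an integral equation one applies \emph{Riemann's method}, and the kernel that enters is the Riemann--Green function $v_3(z,s;\xi,\eta)$ of that hyperbolic operator --- not the ODE resolvent kernel of $H_l$ built from $f_l$ and a second solution, which is what your phrase ``the relevant combination of $f_l$ and the second solution'' describes. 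The latter is $k$-dependent and cannot directly serve as the kernel of a $k$-independent equation for $K$.

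The paper computes $v_3$ explicitly as a Gauss hypergeometric function,
\[
v_3(z,s;\xi,\eta)=\Big(\tfrac{z^2-\eta^2}{z^2-s^2}\cdot\tfrac{\xi^2-s^2}{\xi^2-\eta^2}\Big)^{l}\,
{}_2F_1\!\Big(-l,-l;1;\tfrac{(z^2-\xi^2)(\eta^2-s^2)}{(z^2-\eta^2)(\xi^2-s^2)}\Big),
\]
and the crucial pointwise bound (Lemma~\ref{lem:v3est}) is
\[
|v_3|\le C_l\Big(\tfrac{\xi^2}{\xi^2-\eta^2}\Big)^{l}\quad (l>-\tfrac12),\qquad
|v_3|\le \tfrac{C_{-1/2}}{\beta}\Big(\tfrac{\xi^2}{\xi^2-\eta^2}\Big)^{-\frac12+\beta}\quad (l=-\tfrac12),
\]
obtained from the behaviour of ${}_2F_1(-l,-l;1;t)$ as $t\to1^-$ (formulae \eqref{eq:Fnear11} and \eqref{eq:Fnear13}); the logarithmic divergence of ${}_2F_1(\tfrac12,\tfrac12;1;t)$ at $t=1$ is precisely the source of the factor $\beta^{-1}$. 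This is where the prefactor $(2/x)^l$ (resp.\ $(2/x)^{-1/2+\beta}$) in the final estimate comes from, after $\xi^2-\eta^2=xy$. Once this lemma is available, the iteration in Lemma~\ref{lem:estwn} proceeds exactly as you sketch, and a final smoothing/approximation argument (Lemma~\ref{lem:Kder}) handles nonsmooth $q$. So your main obstacle is not ``uniform-in-$k$ Bessel estimates'' but rather identifying and bounding the Riemann function of the associated Euler--Poisson--Darboux-type operator; the hypergeometric asymptotics in Appendix~\ref{sec:hypgeom} are the tool for that.
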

Here we end up with a similar situation as in the case of Theorem \ref{mainthm1}: The bigger the parameter $l$, the more restrictive the assumptions on $q$ need to be. The approach we use to obtain our results is in principle well known: first of all, one establishes a second order equation for the kernel, which can be solved using Riemann's method in combination with successive approximation. The crucial points are the estimates for the Riemann function and the iterates, which we improve at some points. Let us finish the introduction by briefly explaining the main novelties of this article: concerning the transformation operators near $0$, we are able to generalize the previous results from \cite{fad,cc, HM, volk}, where only continuous potentials $q\in C[0,L]$ were considered. 
Moreover we are able to fix some technical inconsistencies in the proofs of the estimates for $B$ and provide further details to make the presentation more accessible. It should also be mentioned that in \cite{HM}, by using a different method, more general classes of potentials could be included, however, only existence of the transformation operators was established, without providing explicit estimates.
%\footnote{R.\ Hryniv \& Ya.\ Mykytyuk, Math.\ Phys.\ Anal.\ Geom.\ {\bf 7}, 119--149 (2004)}.
%\marginpar{\tiny AK: Continuity is not a problem!\\ For $l=0$ there are works even for $q\in H^{-1}$, see footnote! }
%Concerning the case $l=-\frac12$, it seems that not even continuity suffices , and therefore one needs further decay assumptions near $0$, at least with the approach used so far.
For the transformation operators near $\infty$, the results in \cite{soh} only consider estimates for the case $l>0$, which we generalize to $-\frac12 \le l$. 
%\marginpar{\tiny AK: What about Coz \& Coudray?}
%\fbox{FIXME(MH): Incomplete! Probably more(better) references and history?}
%%%%%%%%%%%%%%%%%%%%%%%%%%%%%%%%%%%%%%%%%%%%%%%%%%%%%%%%%%%%%%%%%%%%%%%%%
\section{Transformation Operators near $0$}
\label{Isection}
%%%%%%%%%%%%%%%%%%%%%%%%%%%%%%%%%%%%%%%%%%%%%%%%%%%%%%%%%%%%%%%%%%%%%%%%%
As a starting point, we want to obtain an equation for the kernel $B(x,y)$ on a finite interval $0<y\le x \le L$. To this end we assume first that $B$ is $C^2(\R^2_+)$ and satisfies the estimates from Theorem \ref{mainthm1}, thus 
\be \label{eq:B2bcnew}
B(x,y)=
\begin{cases}
	\OO(y^{\frac{1}{2p'}-\alpha}), & l>-\frac12 \\
	\OO(y^{\frac{1}{p'}-\alpha}), & l=-\frac12 \\
\end{cases}
\quad \text{ as } y \to 0.
\ee
We start by differentiating \eqref{eq:to_GL} twice with respect to $x$ to obtain

\begin{align} \label{eq:phi1}
\phi''(z,x)=&\phi''_l(z,x)+\frac{\partial B(x,x)}{\partial x} \phi_l(z,x)+B(x,x)\phi'_l(z,x) \\
&+\frac{\partial B(x,y)}{\partial x} \Big| _{y=x} \phi_l(z,x) +\int_{0}^{x} \frac{\partial^2 B(x,y)}{\partial x^2}\phi_l(z,y) dy. \nonumber
\end{align}
On the other hand, using the facts that $\phi$ satisfies \eqref{eq:Bessel}, $\phi_l$ satisfies \eqref{eq:Besselfree} and plugging in \eqref{eq:to_GL} for $\phi$ , we also get
\be \label{eq:phi2}
\phi''(z,x)=\phi_l''(z,x)+q(x)\phi_l(z,x)+\int_{0}^{x} B(x,y)(\frac{l(l+1)}{x^2} + q(x)-z)\phi_l(z,y) dy.
\ee
Once more applying \eqref{eq:Besselfree} and integrating by parts twice leads to
\begin{align} \label{eq:phi3}
z \int_{0}^{x} B(x,y) \phi_l(z,y) dy=&\int_{0}^{x}B(x,y)\frac{l(l+1)}{y^2}\phi_l(z,y)dy+B(x,0)\phi'_l(z,0) \nonumber \\
&-B(x,x)\phi'_l(z,x)+\frac{\partial B(x,y)}{\partial y} \Big|_{y=x} \phi_l(z,x) \nonumber\\
&-\frac{\partial B(x,y)}{\partial y} \Big|_{y=0} \phi_l(z,0)-\int_{0}^{x} \frac{\partial^2 B(x,y)}{\partial y^2}\phi_l(z,y) dy.
\end{align}
Now plugging \eqref{eq:phi3} into \eqref{eq:phi2} and setting \eqref{eq:phi1} equal to \eqref{eq:phi2} gives us the following identity for the kernel $B(x,y)$:
\begin{align} \label{eq:B1}
&\frac{\partial B(x,x)}{\partial x}\phi_l(z,x)+\left( \frac{\partial B(x,y)}{\partial x}+\frac{\partial B(x,y)}{\partial y} \right)\Big|_{y=x} \phi_l(z,x) \nonumber-q(x)\phi_l(z,x)\\
&+B(x,0)\phi'_l(z,0)-\frac{\partial B(x,y)}{\partial y}\Big|_{y=0}\phi_l(z,0) \nonumber\\
 &+ \int_{0}^{x}\left( \frac{\partial^2 B}{\partial x^2} - \frac{\partial^2 B}{\partial y^2} + \frac{l(l+1)}{y^2}B - \frac{l(l+1)}{x^2}B-q(x) \right) \phi_l(z,y)dy=0
\end{align}
Hence, in order to ensure that the right-hand side of \eqref{eq:to_GL} satisfies equation \eqref{eq:Bessel}, it's sufficient that $B$ solves the following problem:
\begin{align} \label{eq:B2}
&\left( \frac{\partial^2 }{\partial x^2} - \frac{\partial^2 }{\partial y^2}  +  \frac{l(l+1)}{y^2} - \frac{l(l+1)}{x^2}-q(x) \right)B(x,y)=0, \quad 0<y<x \\
&\quad \frac{\partial B(x,x)}{\partial x}=\frac{q(x)}{2}, \quad \lim_{y \to 0} B(x,y) \phi'_l(x,y)=0=\lim_{y \to 0} B(x,y) y^{l}. \label{eq:B2bc}
\end{align}
The term $\frac{\partial B(x,y)}{\partial y}\Big|_{y=0}\phi_l(z,0)$ disappears, since $\phi_l(z,y)=\OO(y^{l+1})$ by the properties of $\phi_l$ mentioned e.g. in \cite[Section 2]{HKT2, ktt}, and $\frac{\partial B(x,y)}{\partial y}$ can be assumed to be bounded(cf. Lemma \ref{lem:Bder}). 
%We will indeed construct $B$ in a way such that
%\be \label{eq:B2bcnew}
%B(x,y)=
%\begin{cases}
%	\OO(y^{\frac{1}{2p'}-\alpha}), & l>-\frac12 \\
%	\OO(y^{\frac{1}{p'}-\alpha}), & l=-\frac12 \\
%\end{cases}
%\quad \text{ as } y \to 0.
%\ee
%and add this requirement to our boundary conditions \eqref{eq:B2bc} from before. 
The next step is to bring \eqref{eq:B2}--\eqref{eq:B2bc} into a simpler form. 
%As the resulting transformed equations in \cite{volk}(cf. formulas (1.4)--(1.6) there) contain a small error, we provide the details of these calculations in the following lemma:

%\begin{lemma} \label{lem:transf}
Let
\begin{align} \label{eq:Btr}
z:=\frac{(x+y)^2}{4}, \quad s:=\frac{(x-y)^2}{4}, \quad \text{ and } u(z,s)=(z-s)^{l} B(x,y). 
\end{align}
A straightforward calculation yields the following equation for $u(z,s)$:
%Then we obtain the following equation for $u(z,s)$:
\be \label{eq:u}
\frac{\partial^2 u}{\partial z \partial s}+\frac{l}{z-s}\frac{\partial u}{\partial z}-\frac{l}{z-s}\frac{\partial u}{\partial s}=\frac{1}{4 \sqrt{zs}}q(\sqrt{z}+\sqrt{s})u,
\ee
whereas the boundary conditions \eqref{eq:B2bc} transform according to
\be \label{eq:ubc}
\frac{\partial u}{\partial z}+\frac{lu}{z}=\frac{z^{l-\frac12}q(\sqrt{z})}{4}, \quad u(z,z)=
\begin{cases}
\OO(z^{l+\frac{1}{2p'}-\alpha}), & l>-\frac12 \\
\OO(z^{l+\frac{1}{p'}-\alpha}), & l=-\frac12 \\
\end{cases}
\quad \text{ as } z \to 0. 
\ee
To solve the equation \eqref{eq:u}--\eqref{eq:ubc}, we will use Riemann's method, a well known approach to treat linear hyperbolic partial differential equations of second order in two independent variables. For further information and applications we refer to the huge amount of literature, e.g. \cite{ch2}, \cite{ksg}, \cite{lev}. 
%For the time being let us assume that our potential $q$ and our function $u$ from Lemma \ref{lem:transf} are smooth enough, i.e. $C^2(\R_+) $(or $C^2(\R^2_+)$ resp.).
Let us continue by introducing the following operator defined on $C^2(\R^2_+)$:

\be\label{def:L}
Lu:= \frac{\partial^2 u}{\partial z \partial s}+\frac{l}{z-s}\frac{\partial u}{\partial z}-\frac{l}{z-s}\frac{\partial u}{\partial s}
\ee
and its formal adjoint, which can be computed, using integration by parts, as
\[
Mv:=L^{*}v=\frac{\partial^2 v}{\partial z \partial s}-\frac{l}{z-s}\frac{\partial v}{\partial z}+\frac{l}{z-s}\frac{\partial v}{\partial s}-\frac{2l}{(z-s)^2}v.
\]
Next let $0<\eta \le \xi \le L$ and $0<\ve<\delta$. We define the points $0'$, $A$, $B$, $B'$, $B_1$, $B_2$, $B_3$, $B_4$, $C$, $C_1$, $C_2$ and $P$ in the $z$-$s$-plane according to the following picture:

\begin{tikzpicture}[scale=5]
    \draw [|-|,thick] (0,2) node (saxis) [above] {$s$}
        |- (2,0) node (zaxis) [right] {$z$};
		\coordinate (C) at (1.1,0) ;
		\coordinate (B_4) at (1.2,0.9) ;
		\coordinate (B') at (1.1,0.9) ;
    \draw (0,0) -- (1.1,1.1) coordinate (B) node[above] {$B$};
    \draw (0.2,0) coordinate (a_3) -- (1.3,1.1) coordinate (B_3) node[above] {$B_3$};
		\draw (C) node[below] {$C$}-- (B);
		\draw (1.2,0) coordinate (C_2) node[below] {$C_2$} -- (1.2,1) coordinate (B_2) node[below,right] {$B_2$};
		\draw (1,0) coordinate (C_1) node[below] {$C_1$} -- (1,0.8) coordinate (B_1) node[below,left] {$B_1$};
		\draw (1.8,0) coordinate (A) node[below] {$A$} -- (1.8,1.1) coordinate (P) node[right,above] {$P$};
		\draw (1.1,1.1) coordinate (B) -- (1.8,1.1) coordinate (P);
    \fill[black] (C) circle (0.4pt);
		\fill[black] (C_1) circle (0.4pt);
		\fill[black] (C_2) circle (0.4pt);
		\fill[black] (A) circle (0.4pt);
		\fill[black] (B) circle (0.4pt);
		\fill[black] (P) circle (0.4pt);
		\fill[black] (B') circle (0.4pt) node[left] {$B'$};
    \fill[black] (B_1) circle (0.4pt);
		\fill[black] (B_2) circle (0.4pt);
		\fill[black] (B_3) circle (0.4pt);
		\fill[black] (B_4) circle (0.4pt) node[below,right] {$B_4$};
		\fill[black] (0.2,0) circle (0.4pt) node[below] {$0'$};
		\fill[black] (0,0) circle (0.4pt) node[below] {$0$};
    \draw [<->](0,-0.15) -- (0.2,-0.15) node[sloped,below,midway] {$\delta$};
		\draw [<->](0,-0.25) -- (1.1,-0.25) node[sloped,below,midway] {$\eta$};
		\draw [<->](0,-0.35) -- (1.8,-0.35) node[sloped,below,midway] {$\xi$};
		\draw [<->](1.35,1) -- (1.35,0.9) node[right,midway] {$\varepsilon$};
		\draw [<->](1,0.5) -- (1.1,0.5) node[sloped,below,midway] {$\varepsilon$};
		\draw [<->](1.1,0.4) -- (1.2,0.4) node[sloped,below,midway] {$\varepsilon$};
		\draw [<->](1.9,0) -- (1.9,1.1) node[right,midway] {$\eta$};
		
\end{tikzpicture}
%\fbox{FIXME(MH): Improve picture!}

By $G$ let us denote the region enclosed by the segment $\Gamma:=\overline{A P B 0}$. If all the appearing functions are smooth enough and well defined on $\overline{G}$, applying Green's Theorem leads to:
\begin{align} \nonumber
2 \iint_G \left( vLu-uMv \right) dz ds=& \oint_{\Gamma} \left( u\frac{\partial v}{\partial z}- v \frac{\partial u}{\partial z}+\frac{2l}{z-s}uv \right) dz\\
&- \left( u\frac{\partial v}{\partial s}- v \frac{\partial u}{\partial s}-\frac{2l}{z-s}uv \right) ds. \label{eq:Green}
\end{align}
We will choose $v$ such that the above formula simplifies.
However, the problem is, that $z=s$ and $\eta=z$ might lead to singularities for our chosen $v$, so we have to be careful and continue as follows: we divide $G$ into the parts $APB_3B_2C_2$ and $0'B_1 C_1$ and investigate these regions separately, thus isolating the singularities at $z=s$ and $\eta=z$, and afterward let $\ve$ and $\delta$ tend to $0$. Let's begin with $APB_3B_2C_2$ and apply Green's Theorem to get:

\begin{align} 
2 \iint_{APB_3B_2C_2} \Big( vLu&-uMv \Big) dz ds= \nonumber \\
\int_{C_2 A} \left( u\frac{\partial v}{\partial z}- v \frac{\partial u}{\partial z}+\frac{2l}{z}uv \right) dz &- \int_{AP}\left( u\frac{\partial v}{\partial s}- v \frac{\partial u}{\partial s}-\frac{2l}{\xi-s}uv \right) ds \nonumber \\
+ \int_{PB_3} \left( u\frac{\partial v}{\partial z}- v \frac{\partial u}{\partial z}+\frac{2l}{z-\eta}uv \right)dz &- \int_{B_2 C_2} \left( u\frac{\partial v}{\partial s}- v \frac{\partial u}{\partial s}-\frac{2l}{\eta+\ve-s}uv \right) ds \nonumber \\
+\int_{B_3 B_2} \Big( u \frac{\partial v}{\partial z}- v \frac{\partial u}{\partial z} &+ \frac{4l}{z-s}uv- u\frac{\partial v}{\partial s}+ v \frac{\partial u}{\partial s} \Big) dz. \label{eq:Green2}
\end{align}
Now we further evaluate some of the appearing integrals using integration by parts:
\begin{align}
\int_{C_2 A} \left( u\frac{\partial v}{\partial z}- v \frac{\partial u}{\partial z}+\frac{2l}{z}uv \right) dz&=uv \Big|_{C_2}^{A}-2 \int_{C_2 A} v \left( \frac{\partial u}{\partial z} -\frac{l}{z}u \right) dz \nonumber \\
\int_{AP}\left( u\frac{\partial v}{\partial s}- v \frac{\partial u}{\partial s}-\frac{2l}{\xi-s}uv \right) ds&= -uv \Big|_{A}^{P}+2 \int_{AP} u \left( \frac{\partial v}{\partial s} -\frac{l}{\xi-s}v \right) ds \nonumber \\
\int_{PB_3} \left( u\frac{\partial v}{\partial z}- v \frac{\partial u}{\partial z}+\frac{2l}{z-\eta}uv \right)dz&= -uv \Big|_{P}^{B_3} +2 \int_{P B_3} u \left( \frac{\partial v}{\partial z} +\frac{l}{z-\eta}v \right) ds. \label{eq:Green3}
\end{align}
To proceed, we introduce the Riemann-Green function $v_1$ 
%to simplify some of the expressions in \eqref{eq:Green2}. $v_1$ shall be 
as a solution to the following problem:
\begin{align}
	1)& \quad Mv_1=0 \nonumber \\
	2)& \quad \frac{\partial v_1}{\partial z} + \frac{l}{z-\eta} v_1=0 \quad \text{on } PB \nonumber\\
	3)& \quad \frac{\partial v_1}{\partial s} - \frac{l}{\xi-s} v_1=0. \quad \text{on } AP \nonumber\\
	4)& \quad v_1(P)=1 \label{eq:propv_1}
\end{align}
In \cite{ksg}, \cite{lev} an explicit formula was computed:
\be \label{eq:formulav_1}
v_1(z,s; \eta, \xi)=(\eta-z)^l (s-\xi)^l (s-z)^{-2l}\hyp21{-l,-l}{1}{\frac{(z-\xi)(s-\eta)}{(z-\eta)(s-\xi)}}.
\ee
%For the reader's convenience, let's give a short sketch of proof for the previous formula. By considering symmetry groups for the Euler-Poisson-Darboux operator $L$, one first sees by a lengthy but rather straightforward computation, that $L$ is invariant under the group action of $\GL(2, \C)$(so especially, under  $\SL(2, \C)$); i.e., if $u(z,s)$ is a solution of $L u=0$, then $ \displaystyle \ti{u}(z,s):=(bz+d)^l(bs+d)^lu\left(\frac{az+c}{d+bz}, \frac{as+c}{d+bs}\right)$ is also a solution, if $ad-bc \ne 0$. Also motivated by symmetry group considerations, one chooses the following ansatz for the solution $u$: $u(z,s)=z^{\mu} u_1(\frac{s}{z}) $, and it turns out, that $u_1$ is indeed a solution of the hypergeometric equation \eqref{eq:hypeq} with parameters $a=-\mu$, $b=-l$ and $c=1-\mu+l$. Next one sets $\mu=l$ and observes, that a solution $u$ of $Lu=0$ can be transformed to a solution $v$ of $Mv=0$ via $v(z,s)=(z-s)^{-2l}u(z,x)$ and thus, since $u$ is invariant under linear transformations, we act with the matrix $
%\begin{pmatrix}
%1 & 1\\
%-\eta & -\xi
%\end{pmatrix}
%$. This indeed leads to formula \eqref{eq:formulav_1} and we observe, that also the boundary conditions given in \eqref{eq:propv_1} are satisfied. For further details, especially for the connections between symmetry groups for certain PDEs and special functions, for the definition of the group action, etc..., we refer e.g. to \cite{mil1}--\cite{mil2}.
The proof of this formula heavily relies on the connection between symmetry groups for certain PDEs and special functions. For further information on this topic we refer e.g. to \cite{mil1}--\cite{mil2}.
To continue, employing \eqref{eq:propv_1} and \eqref{eq:Green3} leads to:

\begin{align}
2 \iint_{APB_3B_2C_2}  v_1Lu dz ds&= -2 \int_{C_2 A} v_1 \left( \frac{\partial u}{\partial z} -\frac{l}{z}u \right) dz +uv_1 \Big|_{C_2}^{A}+uv_1 \Big|_{A}^{P}-uv_1 \Big|_{P}^{B_3} \nonumber \\
&- \int_{B_2 C_2} \left( u\frac{\partial v_1}{\partial s}- v_1 \frac{\partial u}{\partial s}-\frac{2l}{\eta+\ve-s}uv_1 \right) ds \nonumber \\
+\int_{B_3 B_2} &\Big( u \frac{\partial v_1}{\partial z}- v_1 \frac{\partial u}{\partial z} + \frac{4l}{z-s}uv_1- u\frac{\partial v_1}{\partial s}+ v_1 \frac{\partial u}{\partial s} \Big) dz. \label{eq:Green4}
\end{align}
Next let's focus on the region enclosed by $0' B_1 C_1$. Similar considerations as before imply:
\begin{align}
2 \iint_{0' B_1 C_1} \Big( vLu&-uMv \Big) dz ds=uv \Big|_{0'}^{C_1}-2 \int_{0' C_1} v \left( \frac{\partial u}{\partial z} -\frac{l}{z}u \right) dz \nonumber \\
&- \int_{C_1 B_1} \left( u\frac{\partial v}{\partial s}- v \frac{\partial u}{\partial s}-\frac{2l}{\eta-\ve-s}uv \right) ds \nonumber \\
+\int_{B_1 0'} &\Big( u \frac{\partial v}{\partial z}- v \frac{\partial u}{\partial z} + \frac{4l}{z-s}uv- u\frac{\partial v}{\partial s}+ v \frac{\partial u}{\partial s} \Big) dz. \label{eq:Green5}
\end{align}
Using formula \eqref{eq:formulav_1} for $v_1$ and the form of a second linearly independent solution to \eqref{eq:hypeq}, we obtain a solution $v_2$, defined in $0' B_1 C_1$, to the following problem:
\begin{align}
	1)& \quad Mv_2=0 \nonumber \\
	2)& \quad v_2(z,z)=0 \nonumber \\
	3)& \quad v_1(z,\eta)-v_2(z,\eta)=\OO(1) \quad \text{as} \quad z \to \eta. \label{eq:propv_2}
\end{align}
$v_2$ also has an explicit representation(cf. \cite{lev} for details):
\begin{align}
v_2(z,s; \eta, \xi)=(-1)^{l}\frac{\Gamma(1+l)}{\Gamma(-l) \Gamma(2+2l)} &(z-s) (\eta-\xi)^{1+2l}(\eta-z)^{-l-1} (s-\xi)^{-l-1} \nonumber \\
\times &\hyp21{1+l,1+l}{2+2l}{\frac{(z-s)(\eta-\xi)}{(z-\eta)(s-\xi)}}. \label{eq:formulav_2}
\end{align}
Setting $v=v_2$ in \eqref{eq:Green5} we end up with
\begin{align}
2 \iint_{0' B_1 C_1} & v_2Lu dz ds=uv \Big|_{0'}^{C_1}-2 \int_{0' C_1} v_2 \left( \frac{\partial u}{\partial z} -\frac{l}{z}u \right) dz \nonumber \\
&- \int_{C_1 B_1} \left( u\frac{\partial v_2}{\partial s}- v_2 \frac{\partial u}{\partial s}-\frac{2l}{\eta-\ve-s}uv_2 \right) ds \nonumber \\
+\int_{B_1 0'} &\Big( u \frac{\partial v_2}{\partial z}- v_2 \frac{\partial u}{\partial z} + \frac{4l}{z-s}uv_2- u\frac{\partial v_2}{\partial s}+ v_2 \frac{\partial u}{\partial s} \Big) dz. \label{eq:Green6}
\end{align}
Now if we introduce
\[
v=\begin{cases} v_1, & z>\eta,\\ v_2, & z<\eta.\end{cases}
\]
and combine \eqref{eq:Green4}, \eqref{eq:Green6} with the boundary condition \eqref{eq:ubc}, this implies
\begin{align}
2 \iint_{0' B_1 C_1+APB_3B_2C_2}  vLu  dz ds=-\frac12 &\int_{0'C_1}v q(\sqrt{z}) z^{l-\frac12} dz - \frac12 \int_{C_2 A} v q(\sqrt{z}) z^{l-\frac12} dz \nonumber \\
+uv_1 \Big|_{C_2}^{A}+uv_1 \Big|_{A}^{P}&-uv_1 \Big|_{P}^{B_3}+uv \Big|_{0'}^{C_1}+\Delta_1+\Delta_2, \label{eq:Green7}
\end{align}

where

\begin{align}
\Delta_1 &:=\int_{C_2 B_2} \left( u\frac{\partial v_1}{\partial s}- v_1 \frac{\partial u}{\partial s}-\frac{2l}{\eta+\ve-s}uv_1 \right) ds \nonumber \\
&-\int_{C_1 B_1} \left( u\frac{\partial v_2}{\partial s}- v_2 \frac{\partial u}{\partial s}-\frac{2l}{\eta-\ve-s}uv_2 \right) ds \label{eq:delta1},
\end{align}

\begin{align}
 \Delta_2 &:=\int_{B_3 B_2}\Big( u \frac{\partial v_1}{\partial z}- v_1 \frac{\partial u}{\partial z} +\frac{4l}{z-s}uv_1- u\frac{\partial v_1}{\partial s}+ v_1 \frac{\partial u}{\partial s} \Big) dz \nn \\
&+\int_{B_1 0'} \Big( u \frac{\partial v_2}{\partial z}- v_2 \frac{\partial u}{\partial z} + \frac{4l}{z-s}uv_2- u\frac{\partial v_2}{\partial s}+v_2 \frac{\partial u}{\partial s} \Big) dz. \label{eq:delta2}
\end{align}
Before performing the limits $\ve$ and $\delta \to 0$ in \eqref{eq:Green7}, let us provide some auxiliary estimates for $v$. They have basically already been obtained in \cite{kty}, however, we will give a slightly more general version here and, for the reader's convenience, also repeat the main arguments:
\begin{lemma} \label{lem:vest} \cite[Lemma A.2]{kty}
Fix some $0\le s \le \eta$ and let $z_1(s)$, $z_2(s)$ be defined by the following equations: 

\be \label{def:z12}
-1=\frac{(z_1(s)-\xi)(s-\eta)}{(z_1(s)-\eta)(s-\xi)}, \quad -1=\frac{(z_2(s)-s)(\eta-\xi)}{(z_2(s)-\eta)(s-\xi)}
\ee

Then the functions $v_1$ and $v_2$ satisfy

\begin{align}
\abs{v_1(z,s;\eta,\xi)} &\le C_1 (z-\eta)^{l} (\xi-s)^{l} (z-s)^{-2l}, \quad z \in (z_1, \xi) \label{eq:estv_11} \\
\abs{v_1(z,s;\eta,\xi)} &\le C_2 (z-s)^{-2l} (\xi-z)^{l} (\eta-s)^l (\log {   \frac{(\xi-z)(\eta-s)}{(z-\eta)(\xi-s)} } +1), \quad  z \in (\eta, z_1) \label{eq:estv_12} \\
\abs{v_2(z,s;\eta,\xi)} &\le C_3 (\xi-\eta)^{1+2l} (z-s) (\xi-s)^{-l-1} (\eta-z)^{-l-1}, \quad z \in (0,z_2) \label{eq:estv_21} \\
\abs{v_2(z,s;\eta,\xi)} &\le C_4 (\xi-\eta)^{l} (z-s)^{-l} (\log {   \frac{(z-s)(\xi-\eta)}{(\eta-z)(\xi-s)} } +1), \quad z \in (z_2, \eta). \label{eq:estv_22}
\end{align}

\end{lemma}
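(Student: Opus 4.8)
The plan is to derive the estimates \eqref{eq:estv_11}--\eqref{eq:estv_22} directly from the explicit hypergeometric representations \eqref{eq:formulav_1} and \eqref{eq:formulav_2}, by controlling the size of the Gauss function $\hyp21{a}{c}{w}$ along the relevant range of its argument $w$. The key geometric observation is that for $0\le s\le\eta\le\xi$ and $z$ in the various subintervals, the argument $w=\frac{(z-\xi)(s-\eta)}{(z-\eta)(s-\xi)}$ (resp.\ its analogue for $v_2$) is real and the points $z_1(s),z_2(s)$ defined in \eqref{def:z12} are precisely where $w=-1$; so for $z$ between $\eta$ and $z_1$ the argument lies in $(-\infty,-1)$ or in a neighborhood of $1$ where a logarithmic singularity of $\hyp21{-l,-l}{1}{\cdot}$ appears, while for $z$ beyond $z_1$ (towards $\xi$) the argument stays in a compact subinterval of $[-1,1]$ on which the hypergeometric function is bounded by a constant $C_1$ depending only on $l$. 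First I would record the standard facts about $\hyp21{-l,-l}{1}{w}$: it is bounded on $[0,1-\delta]$, it has at worst a $\log$ singularity as $w\to1^-$ (this uses \dlmf{15.4.21} together with the Gauss summation / connection formulae, since $c-a-b=1+2l>0$), and it grows at most like a power near $w=-\infty$; the prefactor powers $(z-\eta)^l(s-\xi)^l(s-z)^{-2l}$ then absorb this growth, which is exactly the content of \eqref{eq:estv_11}--\eqref{eq:estv_12}.

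Concretely, for \eqref{eq:estv_11} with $z\in(z_1,\xi)$ I would argue that $w\in[-1,w_0]$ for some $w_0<1$ uniform over $s\in[0,\eta]$ (using monotonicity of $w$ in $z$ and the boundary values $w=0$ at $z=\xi$ and $w=-1$ at $z=z_1$), so $|\hyp21{-l,-l}{1}{w}|\le C_1$ and the bound follows by pulling the three algebraic factors out as written. For \eqref{eq:estv_12} with $z\in(\eta,z_1)$ the argument runs through $(-\infty,-1)$ as $z\downarrow\eta$; here I would apply the linear transformation $\hyp21{-l,-l}{1}{w}=(1-w)^{l}\,\hyp21{-l,1+l}{1}{\frac{w}{w-1}}$ (\dlmf{15.8.1}) to move the argument into $(0,1)$, where $\frac{w}{w-1}\to1$ as $z\downarrow\eta$, and then invoke the logarithmic bound near $1$; combining $(1-w)^{l}$ with the prefactor and rewriting $1-w=\frac{(\xi-z)(\eta-s)}{(z-\eta)(\xi-s)}\cdot(\text{sign})$ produces exactly the factor $(\xi-z)^l(\eta-s)^l$ and the $(\log(\cdots)+1)$ term. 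The estimates \eqref{eq:estv_21}--\eqref{eq:estv_22} for $v_2$ are handled in the same spirit starting from \eqref{eq:formulav_2}: on $z\in(0,z_2)$ the hypergeometric argument is bounded away from its singularities so the Gauss function is $\OO(1)$ and the algebraic prefactor gives \eqref{eq:estv_21}; on $z\in(z_2,\eta)$ one again transforms to bring the argument near $1$ and picks up the logarithm, yielding \eqref{eq:estv_22} after combining powers. Since the statement only claims the existence of constants $C_1,\dots,C_4$ depending on $l$ (and $L$ through the ranges of $z,s,\eta,\xi$), I do not need sharp constants, only uniform bounds.

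The main obstacle I expect is the bookkeeping near the confluence point $w=1$ (equivalently $z=\eta$ from the $v_1$ side, or the matching condition 3) in \eqref{eq:propv_2}), where one must be careful that the parameter $1+2l$ satisfies $1+2l>0$ — true since $l>-\tfrac12$, and the borderline case $l=-\tfrac12$ (where $c-a-b=0$) is precisely where the logarithm becomes genuine and is not an accident; so the argument must be organized so that the case distinctions in the final theorems ($l>-\tfrac12$ versus $l=-\tfrac12$) are already visible at this level. A secondary nuisance is verifying that the curves $s\mapsto z_1(s),z_2(s)$ are well defined and that the inequalities $z_1,z_2\in(\eta,\xi)$ resp.\ $(0,\eta)$ hold throughout, so that the four regions in \eqref{eq:estv_11}--\eqref{eq:estv_22} genuinely tile the strip; this is elementary from \eqref{def:z12} by solving the linear-fractional equations for $z$, but it needs to be stated cleanly before the hypergeometric estimates are applied. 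Everything else is a routine matter of reading off the power-law prefactors from \eqref{eq:formulav_1}--\eqref{eq:formulav_2} and citing the standard connection formulae, so I would keep those computations terse and refer to \cite{kty}, \cite{lev}, \cite{ksg} for the parts already in the literature, adding detail only where the slightly more general range of parameters requires it.
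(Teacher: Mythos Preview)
Your plan is essentially the paper's: insert the explicit formulas \eqref{eq:formulav_1}--\eqref{eq:formulav_2} and bound the Gauss factor by standard hypergeometric asymptotics, the algebraic prefactors then giving the powers in \eqref{eq:estv_11}--\eqref{eq:estv_22}. The only methodological difference is that for \eqref{eq:estv_12} the paper applies the expansion \eqref{eq:Fnearinfty} at $\sigma_1\to-\infty$ directly, whereas you route through the Pfaff transformation and the logarithmic singularity of $\hyp21{-l,1+l}{1}{t}$ at $t\to1^-$; these are interchangeable since $|\sigma_1|$ and $1-\sigma_1$ are comparable for $\sigma_1\le-1$.

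Two small slips to clean up. First, the quantity $\frac{(\xi-z)(\eta-s)}{(z-\eta)(\xi-s)}$ is $-\sigma_1$, not $1-\sigma_1$; a direct computation gives $1-\sigma_1=\frac{(\xi-\eta)(z-s)}{(z-\eta)(\xi-s)}$, so your Pfaff prefactor $(1-\sigma_1)^l$ combines with the algebraic front factor to produce $(\xi-\eta)^l(z-s)^{-l}$ rather than $(\xi-z)^l(\eta-s)^l(z-s)^{-2l}$ --- still an equivalent bound, but not literally ``exactly the factor'' you claim. Second, your remark that $c-a-b=1+2l>0$ gives a logarithm at $w=1$ is inverted: it is precisely this positivity that makes $\hyp21{-l,-l}{1}{w}$ bounded there (cf.\ \eqref{eq:Fnear11}). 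The logarithm in \eqref{eq:estv_12} comes from $\sigma_1\to-\infty$ (equivalently, from $c-a-b=0$ for the Pfaff-transformed function) and is present for every $l\ge-\tfrac12$, so the case distinction $l>-\tfrac12$ versus $l=-\tfrac12$ does not yet appear in this lemma.
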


\begin{proof}
The proof heavily relies on estimates for the hypergeometric function, collected in Appendix \ref{sec:hypgeom}. Let's denote the argument $\frac{(z-\xi)(s-\eta)}{(z-\eta)(s-\xi)}$ of the hypergeometric function in \eqref{eq:formulav_1} by $\sigma_1$. We start by proving \eqref{eq:estv_11}. In this case we have that $0\geq\sigma_1 \geq -1$ and $\hyp21{-l,-l}{1}{\sigma_1}$ is bounded, since for $k>l$, the expression $\abs{\sigma_1}^k \left( \frac{(-l)_k}{k!}\right)^2$ is monotone decreasing and converges to $0$, thus the series in \eqref{Gausshyp} is converging.
%, since the terms additionally alternate in sign. 
For \eqref{eq:estv_12}, we first consider the case $l \notin \N_{0}$. Noting that $\sigma_1 \le -1$ and employing \eqref{eq:Fnearinfty} and \eqref{eq:psiref}, we end up with the following equation:

\begin{align*}
\hyp21{-l,-l}{1}{\sigma_1}=\frac{ (-\sigma_1)^{l}}{\Gamma(-l)\Gamma(1+l)} &\sum_{k=0}^{\infty}\left( \frac{(-l)_k}{k!}\right)^2 \sigma_1^{-k} \\
&\times ( \log(-\sigma_1)+2\psi(1+k)-2\psi(k-l)+\pi \cot{\pi l} ).
\end{align*}
%We already know that $\displaystyle \sum_{k=0}^{\infty}\left( \frac{(-l)_k}{k!}\right)^2 \sigma_1^{-k}$ is finite. 
By \eqref{eq:psinearinfty}, we see that $\displaystyle \psi(1+k)-\psi(k-l)=\frac{l+1}{k-l}+\OO(k^{-2})$ and thus 
\[ 
\sum_{k=0}^{\infty}\left( \frac{(-l)_k}{k!}\right)^2 \sigma_1^{-k}( 2\psi(1+k)-2\psi(k-l))
\] 
admits $\displaystyle \sum_{k=0}^{\infty}  \frac{l+1}{k-l} \left( \frac{(-l)_k}{k!}\right)^2$ as a uniform bound. Therefore we can deduce:

\begin{align*}
\abs{\hyp21{-l,-l}{1}{\sigma_1}}=&\frac{\abs{\sigma_1}^l}{\abs{\Gamma(-l)\Gamma(1+l)}} \big| ( \log(-\sigma_1)+\pi \cot{\pi l} )\sum_{k=0}^{\infty}\left( \frac{(-l)_k}{k!}\right)^2 \sigma_1^{-k} \\
&+\sum_{k=0}^{\infty}\left( \frac{(-l)_k}{k!}\right)^2 \sigma_1^{-k}(2\psi(1+k)-2\psi(k-l)) \big| \\
&\le \frac{\abs{\sigma_1}^l}{\abs{\Gamma(-l)\Gamma(1+l)}} (C_1 \abs{\log(-\sigma_1)+\pi \cot{\pi l}} +C_2) \\
&\le C_2 \abs{\sigma_1}^l (\log(-\sigma_1)+1)
\end{align*}
In the case $l \in \N$, the hypergeometric function reduces to a polynomial and thus the proof is easy. The proof of the remaining estimates \eqref{eq:estv_21}--\eqref{eq:estv_22} is similar.
\end{proof}
We continue our investigation of formula \eqref{eq:Green7} with the following lemma:
\begin{lemma} \label{lem:delta1}
If $\ve \to 0$, we obtain
\be
\Delta_1 \to (A_2-A_1) \left( \frac{\xi-\eta}{\eta-s} \right)^{l} u \Big|_{C}^{B'} \label{eq:delta1lim}
\ee 
for some $A_1, A_2 \in \R$. 
\end{lemma}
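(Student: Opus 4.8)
The plan is to compute the limit of $\Delta_1$ as $\ve\to 0$ by carefully tracking the two boundary integrals in \eqref{eq:delta1}, which run along the near-vertical segments $C_2B_2$ and $C_1B_1$ lying just to the right and just to the left of the line $z=\eta$. Geometrically, as $\ve\to 0$ both segments collapse onto the common vertical segment $CB'$ (the portion of the line $z=\eta$ between $s=0$ and $s=\eta$... more precisely between the level of $C$ and the level of $B'$ as drawn), so the difference of the two integrands should survive as a jump across $z=\eta$. First I would parametrize $C_2B_2$ by $z=\eta+\ve$, $s\in[s_C,s_{B'}]$ and $C_1B_1$ by $z=\eta-\ve$, $s\in[s_C,s_{B'}]$, and rewrite \eqref{eq:delta1} as a single integral in $s$ of the difference of the two integrands.

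Next, the key analytic input is the behavior of $v_1$ and $v_2$ near the line $z=\eta$. From the explicit formulas \eqref{eq:formulav_1} and \eqref{eq:formulav_2}, the argument of the hypergeometric function, $\sigma := \frac{(z-\xi)(s-\eta)}{(z-\eta)(s-\xi)}$, blows up like a constant times $(z-\eta)^{-1}$ as $z\to\eta$; so one is in the regime covered by the large-argument asymptotics of ${}_2F_1$ used already in the proof of Lemma \ref{lem:vest}. Using those expansions (the connection formula \eqref{eq:Fnearinfty} together with \eqref{eq:psiref}), $v_1$ and $v_2$ each have a leading term proportional to $(z-\eta)^{l}\cdot(z-\eta)^{-l} = $ const near $z=\eta$ — that is, they tend to finite (nonzero) limits along $z=\eta$, and moreover the combination $v_1-v_2$ stays $\OO(1)$ by the defining property \eqref{eq:propv_2}(3). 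I would extract from \eqref{eq:formulav_1}–\eqref{eq:formulav_2} the precise constants: writing $v_1(\eta+\ve,s)\to A_2\,\big(\tfrac{\xi-\eta}{\eta-s}\big)^{l}\,(\text{something})$ and similarly for $v_2$, the $s$-dependence that emerges is exactly the factor $\left(\frac{\xi-\eta}{\eta-s}\right)^{l}$ claimed in \eqref{eq:delta1lim}, with $A_1,A_2$ the residual numerical constants (built from $\Gamma$-factors and the value of the ${}_2F_1$ at the relevant argument). The terms $v_i\,\partial_s u$ contribute nothing in the limit because $\partial_s u$ is bounded while the two contributions cancel, and likewise the $\frac{2l}{\eta\pm\ve - s}uv_i$ terms: on $C_2B_2$ the factor is $\frac{2l}{\eta+\ve-s}$ and on $C_1B_1$ it is $\frac{2l}{\eta-\ve-s}$, and since $s$ ranges only up to $s_{B'}<\eta$ both stay bounded and their difference is $\OO(\ve)$. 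So the only surviving piece is $\int u\,\partial_s v_i\,ds$, and integrating the resulting exact $s$-derivative over $[s_C,s_{B'}]$ produces the boundary evaluation $u\big|_{C}^{B'}$ against the coefficient $(A_2-A_1)\big(\frac{\xi-\eta}{\eta-s}\big)^{l}$.

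The main obstacle will be making the interchange of limit and integral rigorous on the segments $C_2B_2$, $C_1B_1$: near the endpoint $B'$ (equivalently, near $s=\eta$ on the diagonal side) the Riemann functions $v_1,v_2$ develop the logarithmic singularities recorded in \eqref{eq:estv_12} and \eqref{eq:estv_22}, and one must check these are integrable in $s$ uniformly in $\ve$ so that dominated convergence applies. I would handle this exactly as in Lemma \ref{lem:vest}: split the $s$-interval at the points $z_1^{-1}(\eta\pm\ve)$, $z_2^{-1}(\eta\pm\ve)$ where the two regimes of the hypergeometric estimates meet, use \eqref{eq:estv_11}/\eqref{eq:estv_21} on the bulk and the log-type bounds \eqref{eq:estv_12}/\eqref{eq:estv_22} near $B'$, and observe that $(s-z)^{-2l}$–type and logarithmic factors are $L^1$ in $s$ near the diagonal since $l<\tfrac12$ in all three cases of Theorem \ref{mainthm1} (and $u$ itself vanishes like a positive power on the diagonal by \eqref{eq:ubc}). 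Once the dominated-convergence step is in place, the pointwise limits of the integrands computed from \eqref{eq:formulav_1}–\eqref{eq:formulav_2} give \eqref{eq:delta1lim} directly, with $A_1$ coming from the $v_2$ side and $A_2$ from the $v_1$ side.
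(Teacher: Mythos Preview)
Your proposal contains a genuine gap at its central step. You assert that $v_1(\eta+\ve,s)$ and $v_2(\eta-\ve,s)$ ``tend to finite (nonzero) limits along $z=\eta$'', reasoning that the prefactor $(z-\eta)^l$ cancels a $(z-\eta)^{-l}$ coming from the large-argument behavior of the hypergeometric function. This is not correct: the asymptotics \eqref{eq:Fnearinfty} for $\hyp21{-l,-l}{1}{\sigma}$ as $\sigma\to-\infty$ produce $(-\sigma)^l\bigl(\log(-\sigma)+\text{const}\bigr)$, so after multiplying by the prefactors one finds
\[
v_1(\eta+\ve,s)=\frac{1}{\Gamma(-l)\Gamma(l+1)}\left(\frac{\xi-\eta}{\eta-s}\right)^{l}\log\!\left(\frac{(\eta-s)(\xi-\eta)}{\ve(\xi-s)}\right)+A_1\left(\frac{\xi-\eta}{\eta-s}\right)^{l}+\OO(\ve),
\]
and an identical expression for $v_2(\eta-\ve,s)$ with a different constant $A_2$. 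Both $v_1$ and $v_2$ therefore \emph{diverge} like $\log(1/\ve)$; the whole point of the lemma is that these logarithmic terms carry the \emph{same} coefficient and hence cancel in the difference, leaving only $(A_2-A_1)\bigl(\tfrac{\xi-\eta}{\eta-s}\bigr)^{l}$. Because of this, your dominated-convergence scheme cannot work as written: the individual integrands have no pointwise limit, and the boundary terms $uv_1\big|_{C_2}^{B_2}$, $uv_2\big|_{C_1}^{B_1}$ (which you do not discuss) are themselves $\OO(\log(1/\ve))$ and must be shown to cancel against each other.

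There is a second structural issue. You propose that ``the only surviving piece is $\int u\,\partial_s v_i\,ds$'', but $\partial_s v_i$ is \emph{more} singular than $v_i$ near $z=\eta$, not less. The paper's proof does the opposite: it integrates by parts at the outset to transfer the $s$-derivative from $v_i$ onto $u$, obtaining integrals of the form $\int v_i\bigl(\partial_s u+\tfrac{l}{\eta\pm\ve-s}u\bigr)ds$ plus boundary terms. Only then does one subtract, use the Lipschitz continuity of $u$ and $\partial_s u$ in $z$ to replace $z=\eta+\ve$ by $z=\eta-\ve$ up to $\OO(\ve)$, exploit the cancellation of the $\log(1/\ve)$ parts of $v_1$ and $v_2$, and finally integrate by parts once more (noting that $\bigl(\tfrac{\xi-\eta}{\eta-s}\bigr)^{l}\bigl(\partial_s u+\tfrac{lu}{\eta-s}\bigr)$ is an exact $s$-derivative) to obtain the endpoint evaluation \eqref{eq:delta1lim}. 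Also note that your side remark ``$l<\tfrac12$ in all three cases of Theorem~\ref{mainthm1}'' is false: case~(i) allows arbitrary $l\ge 0$.
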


\begin{proof}
We integrate by parts to obtain:

\begin{align}
\Delta_1=uv_1 \Big|_{C_2}^{B_2} - &uv_2 \Big|_{C_1}^{B_1}-2 \int_{C_2 B_2} v_1 \left( \frac{\partial u}{\partial s}+ \frac{l}{\eta+\ve-s}u \right) ds\nn \\
+&2 \int_{C_1 B_1} v_2 \left( \frac{\partial u}{\partial s}+ \frac{l}{\eta-\ve-s}u \right) ds. \label{eq:Delta1part1}
\end{align}
Using that $u\in C^2(\overline{G})$, and thus $\frac{\partial u}{\partial s}(.,s)$, $u(.,s)$ being locally Lipschitz continuous, we observe the following properties:
\begin{align}
\frac{\partial u}{\partial s} \Big|_{z=\eta-\ve}-\frac{\partial u}{\partial s} \Big|_{z=\eta+\ve}=\OO(\ve), \quad \frac{lu(\eta-\ve,s)}{\eta-\ve-s}-\frac{lu(\eta+\ve,s)}{\eta+\ve-s}=\OO(\ve). \label{eq:ueps}
\end{align}
Inserting \eqref{eq:ueps} into \eqref{eq:Delta1part1}, we obtain the following expression for 
$\Delta_1$:
\begin{align*}
uv_1 \Big|_{C_2}^{B_2} - uv_2 \Big|_{C_1}^{B_1} &+2\int_{0}^{\eta-\ve-\delta} \left( v_2(\eta-\ve,s)-v_1(\eta+\ve,s \right) \left(\frac{\partial u}{\partial s}+\frac{lu}{\eta-\ve-s}\right)\Big|_{z=\eta-\ve}ds \\
&-2 \int_{B_4 B_2} v_1 \left( \frac{\partial u}{\partial s} + \frac{l}{\eta+\ve-s}u \right) ds+\OO(\ve).
\end{align*}
Now we use \eqref{eq:formulav_1}, \eqref{eq:formulav_2} and \eqref{eq:Fnearinfty} to infer:
\[
v_1(\eta+\ve,s)=\frac{1}{\Gamma(-l)\Gamma(l+1)}\left( \frac{\xi-\eta}{\eta-s} \right)^{l} \cdot \log\left(\frac{(\eta-s)(\xi-\eta)}{\ve(\xi-s)} \right)+A_1 \left( \frac{\xi-\eta}{\eta-s} \right)^{l}+\OO(\ve),
\]

\[
v_2(\eta-\ve,s)=\frac{1}{\Gamma(-l)\Gamma(l+1)}\left( \frac{\xi-\eta}{\eta-s} \right)^{l} \cdot \log\left(\frac{(\eta-s)(\xi-\eta)}{\ve(\xi-s)} \right)+A_2 \left( \frac{\xi-\eta}{\eta-s} \right)^{l}+\OO(\ve).
\]
Thus we end up with
\begin{align*}
\Delta_1=uv_1 \Big|_{C_2}^{B_2} - uv_2 \Big|_{C_1}^{B_1}+2 \int_{C_1 B_1} \OO(\ve) &\left( \frac{\partial u}{\partial s}+\frac{lu}{\eta-\ve-s} \right) ds \\
+2 \int_{C_1 B_1}(A_2-A_1) \left( \frac{\xi-\eta}{\eta-s} \right)^{l} \left( \frac{\partial u}{\partial s}+\frac{lu}{\eta-\ve-s} \right) ds&+ \int_{B_4 B_2} \OO( \log(\ve) ) ds + \OO(\ve).
\end{align*}
Finally we let $\ve \to 0$, so that one more integration by parts leads us to:
\begin{align*}
\Delta_1=&uv_1 \Big|_{C_2 \to C}^{B_2 \to B} + uv_2 \Big|_{B_1 \to B}^{C_1 \to C} +2(A_2-A_1) \left( \frac{\xi-\eta}{\eta-s} \right)^{l} u \Big|_{C}^{B'} \\
+2&\int_{CB'} (A_2-A_1) u  \left( -\frac{\partial}{\partial s} \left( \frac{\xi-\eta}{\eta-s} \right)^{l} + \frac{l}{\eta-s} \left( \frac{\xi-\eta}{\eta-s} \right)^{l} \right) ds.
\end{align*}
Since the integral expression disappears and by observing that 
\[
(uv_2)(C_1)-(uv_1)(C_2) \xrightarrow{\ve \to 0} (A_2-A_1)\left( \frac{\xi-\eta}{\eta-s} \right)^{l} u(C)
\]
and
\[
(uv_1)(B_2)-(uv_2)(B_1) \xrightarrow{\ve \to 0} (A_1-A_2)\left( \frac{\xi-\eta}{\eta-s} \right)^{l} u(B'),
\]
(here we again use the asymptotics for $v_1$ and $v_2$ and the fact that the log-terms cancel), the claim follows.
\end{proof}

As a consequence, Lemma \ref{lem:delta1} leads to the following expression for \eqref{eq:Green7}, when we let $\ve \to 0$:

\begin{align}
2 \iint_{0' B_3 P A}  vLu  dz ds=-\frac12 &\int_{0'A}v q(\sqrt{z}) z^{l-\frac12} dz 
+2u(P)-u(0')v(0')-u(B_3)v(B_3) \nn \\
&+u(B')(A_2-A_1) \left( \frac{\xi-\eta}{\delta} \right)^{l}+\Delta_2. \label{eq:Green8}
\end{align}

It remains to perform the limit $\delta \to 0$. In the next lemma this is done for $\Delta_2$:
\begin{lemma} \label{lem:delta2}
If $\delta \to 0$, we obtain $\Delta_2 \to 0$.
\end{lemma}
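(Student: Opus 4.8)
\emph{Plan of proof.} The plan is to bound each of the two line integrals in \eqref{eq:delta2} separately. Both run along the segment $\{z-s=\delta\}$, which I parametrize by $z$ with $s=z-\delta$; after the limit $\varepsilon\to0$ has been taken (so that $B_1,B_2,B_4\to B'$, $C_1,C_2\to C$, $0'=(\delta,0)$ and $B_3=(\eta+\delta,\eta)$) both integrands are the restriction to this line of the Green integrand of \eqref{eq:Green}, namely $u(v_z-v_s)-v(u_z-u_s)+\tfrac{4l}{z-s}uv$, with $v=v_1$ on $B_3B'$ (where $z>\eta$) and $v=v_2$ on $B'0'$ (where $z<\eta$). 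The decisive point is that $u$ vanishes on the diagonal $\{z=s\}$: in the $(x,y)$-variables this line is the half-axis $y=0$, and the second boundary condition in \eqref{eq:B2bc}, $\lim_{y\to0}B(x,y)y^{l}=0$, says precisely that $u(z,z)=(xy)^{l}B(x,y)\big|_{y=0}=0$ for every $z$. Since moreover $u\in C^2(\overline{G})$, a first-order Taylor expansion off the diagonal gives $u(z,z-\delta)=\OO(\delta)$ uniformly on the whole segment, while $u_z-u_s$ stays bounded there.

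Granting this, the part of $B'0'$ with $z$ bounded away from $0$ and from $\eta$ is harmless: there the factor $(z-s)=\delta$ in \eqref{eq:formulav_2}, together with $F\big(1+l,1+l;2+2l;\tfrac{(z-s)(\eta-\xi)}{(z-\eta)(s-\xi)}\big)\to1$, gives $v_2=\OO(\delta)$ and $v_{2,z}-v_{2,s}=\OO(1)$, so that each term of the integrand is $\OO(\delta)$ — in $-v_2(u_z-u_s)$ because $v_2=\OO(\delta)$, in $u(v_{2,z}-v_{2,s})$ because $u=\OO(\delta)$, and in $\tfrac{4l}{z-s}uv_2=\tfrac{4l}{\delta}uv_2$ because of \emph{both} — and integrating over a length $\OO(1)$ yields $\OO(\delta)$. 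The portion of $B'0'$ with $z$ near $0$ is treated the same way, the remaining factors in \eqref{eq:formulav_2} staying bounded and $u$ being there even smaller; so that portion is $\OO(\delta)$ as well.

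The only delicate region is $z$ near $\eta$, where by \eqref{eq:estv_12}, \eqref{eq:estv_22} (or directly from \eqref{eq:formulav_1}, \eqref{eq:formulav_2} via \eqref{eq:Fnearinfty}) both $v_1$ on $B_3B'$ and $v_2$ on the adjacent sub-segment of $B'0'$ are of size $\OO(\delta^{-l}\log\tfrac1\delta)$, so that neither piece is individually small once $l$ is not small. Here I would combine the two pieces, using the matching condition \eqref{eq:propv_2} and the expansions of $v_1$ and $v_2$ near $z=\eta$ already written down in the proof of Lemma \ref{lem:delta1} (with $\varepsilon$ replaced by $|z-\eta|$): on either side of $\eta$ the two branches share the singular leading term $\tfrac1{\Gamma(-l)\Gamma(l+1)}\big(\tfrac{\xi-\eta}{\eta-s}\big)^{l}\log(\cdots)$, and the genuinely $\delta$-singular multiple of it has already been split off in passing from \eqref{eq:Green7} to \eqref{eq:Green8} as the explicit term $u(B')(A_2-A_1)\big(\tfrac{\xi-\eta}{\delta}\big)^{l}$; what is left of the two pieces near $z=\eta$ is then controlled, once more using $u=\OO(\delta)$ on the segment, and tends to $0$. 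Adding the contributions of the three regions gives $\Delta_2\to0$. The main obstacle is precisely this behaviour at $z=\eta$: everywhere else the vanishing of $u$ on the diagonal does all the work, but near $z=\eta$ one must exploit the precise matching of the two Riemann branches $v_1$ and $v_2$ together with the prior extraction of the $\big(\tfrac{\xi-\eta}{\delta}\big)^{l}$ term.
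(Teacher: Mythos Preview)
There is a real gap in the argument, and it lies precisely in your opening move. You write that $u\in C^2(\overline{G})$ and hence, by Taylor off the diagonal, $u(z,z-\delta)=\OO(\delta)$ with $u_z-u_s$ bounded. But $u(z,s)=(z-s)^{l}B(x,y)$, and the factor $(z-s)^{l}$ is not $C^1$ at the diagonal unless $l$ is a nonnegative integer; differentiating transversally produces a term $2l(z-s)^{l-1}B$, which for $-\tfrac12\le l<0$ blows up as $s\to z$. So $u_z-u_s$ is \emph{not} bounded in general, and the Taylor bound $u=\OO(\delta)$ fails for $l<0$. The paper's invocation of ``$u\in C^2(\overline{G})$'' in Lemma~\ref{lem:delta1} is only used on the vertical segments $\overline{C_iB_i}$, which stay at distance $\geq\delta$ from the diagonal; it does not hold up to $z=s$.

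The correct bound on the line $z-s=\delta$ is $|u|\le C\,\delta^{\,l+\frac{1}{2p'}-\alpha}$ (respectively $\delta^{\,l+\frac{1}{p'}-\alpha}$ for $l=-\tfrac12$), coming directly from \eqref{eq:B2bcnew} via $u=(xy)^{l}B$ and $y\sim\delta$. This is exactly the content of the second part of \eqref{eq:ubc}, and it is what the paper uses. With this bound the whole cancellation apparatus you set up near $z=\eta$ becomes unnecessary: each of $\Delta_{2,1}$ and $\Delta_{2,2}$ tends to $0$ on its own. For $\Delta_{2,1}$ the paper keeps $\ve=\delta/2$, so that on $\overline{B_3B_2}$ one has $z-\eta\in[\delta/2,\delta]$; the argument of the hypergeometric function in \eqref{eq:formulav_1} then stays bounded, giving $|v_1|\lesssim\delta^{-l}$ and $|\partial_z v_1|\lesssim\delta^{-l-1}$, so that (after one integration by parts) every term is $\OO(\delta^{\frac{1}{2p'}-\alpha})$. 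For $\Delta_{2,2}$ the factor $(z-s)=\delta$ in \eqref{eq:formulav_2} together with $|u|\lesssim\delta^{l+\frac{1}{2p'}-\alpha}$ leaves only $\int_\delta^{\eta-\delta/2}(\eta-z)^{-l-1}dz$, which is at worst $\OO(\delta^{-l})$; the powers again balance. Your belief that ``neither piece is individually small once $l$ is not small'' is an artefact of the too-weak estimate $u=\OO(\delta)$: once the $\delta^{l}$ factor in $u$ is restored, the $\delta^{-l}$ from $v_1$ (or from integrating $(\eta-z)^{-l-1}$) is exactly cancelled. Finally, the term $u(B')(A_2-A_1)\bigl(\tfrac{\xi-\eta}{\delta}\bigr)^{l}$ in \eqref{eq:Green8} is the limit of $\Delta_1$ (Lemma~\ref{lem:delta1}), not something extracted from $\Delta_2$; nothing has been ``split off'' from $\Delta_2$ in passing from \eqref{eq:Green7} to \eqref{eq:Green8}.
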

\begin{proof}
Here we only sketch the proof in a way such that the main argument should be clear.
Let's first divide the integral into two parts:

\begin{align*}
\Delta_2 &=\int_{B_3 B'}\Big( u \frac{\partial v_1}{\partial z}- v_1 \frac{\partial u}{\partial z} +\frac{4l}{z-s}uv_1- u\frac{\partial v_1}{\partial s}- v_1 \frac{\partial u}{\partial s} \Big) dz \nn \\
&+\int_{B' 0'} \Big( u \frac{\partial v_2}{\partial z}- v_2 \frac{\partial u}{\partial z} + \frac{4l}{z-s}uv_2- u\frac{\partial v_2}{\partial s}- v_2 \frac{\partial u}{\partial s} \Big) dz=:\Delta_{2,1}+\Delta_{2,2}
\end{align*}
and focus on the part $\Delta_{2,1}$ first. Each summand in this expression needs to be treated separately, however, since the calculations are similar, we will only focus on $v_1 \frac{\partial u}{\partial z}$. Without loss of generality, we can also set $\ve=\frac{\delta}{2}$, compute the integral along $\overline{B_3 B_2}$ and then let $\delta \to 0$. An integration by parts gives: 
\begin{align*}
\int_{B_3 B_2} v_1 \frac{\partial u}{\partial z} ds= (u v_1) \big|_{B_3}^{B_2}-\int_0^{\frac{\delta}{2}} (u \frac{\partial v_1}{\partial z})(\eta+\frac{\delta}{2}+t,\eta-\frac{\delta}{2}+t) dt.
\end{align*}
Next we provide an estimate for $\frac{\partial v_1}{\partial z}$ along $\overline{B_3 B_2}$. A straightforward calculation using \eqref{eq:Gausshypder} gives:
\begin{align}
\frac{\partial v_1}{\partial z}&=-l(\eta-z)^{l-1} (s-\xi)^l (s-z)^{-2l}\hyp21{-l,-l}{1}{\frac{(z-\xi)(s-\eta)}{(z-\eta)(s-\xi)}} \nn \\
&+2l(\eta-z)^l (s-\xi)^{l} (s-z)^{-2l-1}\hyp21{-l,-l}{1}{\frac{(z-\xi)(s-\eta)}{(z-\eta)(s-\xi)}} \nn \\
&+(\eta-z)^l (s-\xi)^{l} (s-z)^{-2l} (-l)^2 \frac{(s-\eta)(\xi-\eta)}{(s-\xi)(z-\eta)^2}  \nn \\
&\times \hyp21{-l+1,-l+1}{2}{\frac{(z-\xi)(s-\eta)}{(z-\eta)(s-\xi)}}, \label{est:v_1der}
\end{align}
and thus, since 
\[
\sup_{(z,s) \in \overline{B_3 B_2}}\abs{\frac{(z-\xi)(s-\eta)}{(z-\eta)(s-\xi)}}=\sup_{z \in (\eta+\frac{\delta}{2}, \eta+\delta)} \abs{\frac{(z-\xi)(z-\delta-\eta)}{(z-\eta)(z-\delta-\xi)}}<C,
\]
the hypergeometric-function-terms appearing in \eqref{est:v_1der} are bounded uniformly(cf. the calculations in Lemma \ref{lem:vest} and Appendix \ref{sec:hypgeom} ), which results in
%the following upper bound follows by taking \eqref{eq:formulav_1}, \eqref{eq:Fnear11}--\eqref{eq:Fnear13} and $s=z-\delta$ into account:
%\begin{align*}
%\sup_{(z,s) \in \overline{B_3 B_2}} \abs{\frac{\partial v_1}{\partial z}(z,s)} &\le C \delta^{-l-1} \sup_{z \in (\eta+\frac{\delta}{2}, \eta+\delta)}\abs{\hyp21{-l,-l}{1}{\frac{(z-\xi)(z-\delta-\eta)}{(z-\eta)(z-\delta-\xi)}}} \\
%\end{align*}
\[
\sup_{(z,s) \in \overline{B_3 B_2}} \abs{\frac{\partial v_1}{\partial z}(z,s)}\le  C_l \delta^{-l-1} 
\]
%\begin{cases}
%C_l, & l > -\frac12,\\
%C \log(\frac{1}{\delta}), & l=-\frac12.
%\end{cases}
Consequently, in combination with \eqref{eq:ubc} and the fact, that the length of $\overline{B_3 B_2}$ is proportional to $\delta$, we obtain:
\[
\abs{\int_{B_3 B_2} v_1 \frac{\partial u}{\partial z} ds}= 
\begin{cases}
\OO(\delta^{\frac{1}{2p'}-\alpha}), & l>-\frac12, \\
\OO(\delta^{\frac{1}{p'}-\alpha} ), & l=-\frac12, 
% \OO(\delta^{\frac{1}{p'}-\alpha} \log(\frac{1}{\delta})), & l=-\frac12, 
\end{cases}
\]
which goes to $0$ as $\delta \to 0$.  
For $\Delta_{2,2}$, we will again only have a look at the term $ \frac{4l}{z-s}uv_2$, since similar considerations also apply for the other ones. Using \eqref{eq:formulav_2} and \eqref{eq:ubc} in combination with the observation, that the hypergeometric-function-term in \eqref{eq:formulav_2} is bounded for sufficiently small $\delta>0$, leads us to:
\[
\abs{\int_{B_3 B'} \frac{4l}{z-s}uv_2 dz}
\]
\[
\le
\begin{cases}
C_l \delta^{l+\frac{1}{2p'}-\alpha} \int_{\delta}^{\eta-\frac{\delta}{2}}(\eta-z)^{-l-1}dz \le C_l  \delta^{l+\frac{1}{2p'}-\alpha} \delta^{-l} , &l \geq 0, \\
C_0 \delta^{\frac{1}{2p'}-\alpha} \int_{\delta}^{\eta-\frac{\delta}{2}}(\eta-z)^{-1}dz \le C_0  \delta^{\frac{1}{2p'}-\alpha} (-\log(\delta)) , &l = 0, \\
C_l \delta^{l+\frac{1}{2p'}-\alpha} \int_{\delta}^{\eta-\frac{\delta}{2}}(\eta-z)^{-l-1}dz \le C_l  \delta^{l+\frac{1}{2p'}-\alpha} (\eta-\frac{\delta}{2})^{-l}, &0>l>-\frac12, \\
C_{-\frac12} \delta^{-\frac12+\frac{1}{p'}-\alpha} \int_{\delta}^{\eta-\frac{\delta}{2}}(\eta-z)^{-\frac12}dz \le C_{-\frac12}  \delta^{-\frac12+\frac{1}{p'}-\alpha} (\eta-\frac{\delta}{2})^{\frac12}, &l=-\frac12.
\end{cases}
\]
Letting $\delta$ tend to $0$ finishes the proof of the lemma.
%where $p'$ and $\alpha$ are defined according to the different values of $l$ in Theorem \ref{mainthm1}. 
\end{proof}

%\begin{remark}
%I wasn't able to get the result of Lemma \ref{lem:delta2} without the assumption that u converges to $0$ as $\delta \to 0$. This result is needed though to obtain a useful integral equation for $u$. In \cite{volk} the estimates for $u$ contain an error, as mentioned in \cite{kty} in the appendix(page 21). But I think that the estimates for $u$ stated in \cite{kty} are also not good enough to guarantee this result(e.g. in Thm. 3.1., Remark 3.2. on page 6 it was stated, that $u(z,s)=\OO((z-s)^{\frac12+l-\ve}), \quad 0<\ve<\frac12$, and the exponent $\frac12+l-\ve$ could also be negative if $l<0$, especially in the case $l=-\frac12$). That's one reason why I think that we need more restrictions on $q$.
%\end{remark}

If we let $\delta$ tend to $0$ and apply Lemma \ref{lem:delta2}, the expression \eqref{eq:Green8} provides us the following integral equation for $u$: 

\be \label{eq:intequ}
u(\xi, \eta)= \frac14 \int_{0}^{\xi} v q(\sqrt{z}) z^{l-\frac12} dz + \frac14 \iint_{0 B P A}  (zs)^{-\frac12} vq(\sqrt{z}+\sqrt{s})u  dz ds
\ee
Now we want to show that under our assumptions from Theorem \ref{mainthm1}, this equation has indeed a solution. To do so we use the successive approximation method, which will lead to the subsequent result:
\begin{theorem} \label{thm:estu}
Under the conditions on $q$ stated in Theorem \ref{mainthm1}, there is a unique continuous function  $u(\xi,\eta)$ that solves \eqref{eq:intequ} and satisfies 
\begin{align} \nn
\abs{u(\xi,\eta)}&\le\frac{(\xi-\eta)^{l+\frac{1}{2p'}-\alpha}}{\alpha} (\sqrt{\xi}+\sqrt{\eta})^{2\alpha} \norm{q}_{L^p ((0,\sqrt{\xi}+\sqrt{\eta}])}  \\
	&\times \exp \left(\frac{\ti{C}(\sqrt{\xi}+\sqrt{\eta})^{1+\frac{1}{p'}}}{\alpha} \norm{q}_{L^p ((0,\sqrt{\xi}+\sqrt{\eta}])}\right),\quad  l>-\frac12, \nn \\
\abs{u(\xi,\eta)}&\le\frac{(\xi-\eta)^{l+\frac{1}{p'}-\alpha}}{\alpha} (\sqrt{\xi}+\sqrt{\eta})^{2\alpha} (\max(1,L))^{\frac{1}{2p'}} \norm{q}_{L^p( (0,\sqrt{\xi}+\sqrt{ \eta} ], z^{-\frac{p}{p'}} )}\nn\\
&\times \exp \left(\frac{\ti{C}(\sqrt{\xi}+\sqrt{\eta})^{1+\frac{1}{p'}}(\max(1,L))^{\frac{1}{2p'}}}{\alpha} \norm{q}_{L^p( (0,\sqrt{\xi}+\sqrt{ \eta} ], z^{-\frac{p}{p'}} )} \right), \quad l=-\frac12. \label{eq:uest}
\end{align}
The constant $\ti{C}$ depends on $l$.% and $p$, but not on the potential $q$.
\end{theorem}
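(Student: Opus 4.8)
The plan is to solve the linear integral equation \eqref{eq:intequ} by successive approximation and then to read \eqref{eq:uest} off uniform estimates for the iterates. Write \eqref{eq:intequ} as $u=u_0+\mathcal{T}u$, where $u_0$ is the source term coming from the Cauchy data on the line $s=0$,
\[
u_0(\xi,\eta):=\frac14\int_0^\xi v\,q(\sqrt z)\,z^{l-\frac12}\,dz,
\]
and $\mathcal{T}$ is the integral operator
\[
(\mathcal{T}w)(\xi,\eta):=\frac14\iint_{0BPA}(zs)^{-\frac12}\,v\,q(\sqrt z+\sqrt s)\,w(z,s)\,dz\,ds,
\]
with $v=v(z,s;\eta,\xi)$ the Riemann function assembled above from $v_1$ and $v_2$. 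I would set $u_n:=\mathcal{T}^n u_0$ and define $u:=\sum_{n\ge0}u_n$; once the series is shown to converge, $u$ is by construction a continuous solution of \eqref{eq:intequ}, and the same estimates will make it the only one.

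The first step is to estimate the source term $u_0$. Substituting $w=\sqrt z$ turns it into $u_0(\xi,\eta)=\frac12\int_0^{\sqrt\xi}v(w^2,0;\eta,\xi)\,q(w)\,w^{2l}\,dw$. I would split $(0,\sqrt\xi)$ at the points $\sqrt{z_2}$, $\sqrt\eta$, $\sqrt{z_1}$ of \eqref{def:z12}, insert the pointwise bounds \eqref{eq:estv_11}--\eqref{eq:estv_22} for $v_1$ and $v_2$ on the corresponding subintervals (on $s=0$ the only surviving singularity is the integrable logarithm at $z=\eta$, plus a power of $w$ at one endpoint whose sign depends on $l$; note that \eqref{eq:estv_11}--\eqref{eq:estv_22} already carry the powers of $\xi-\eta$ needed below), and then apply H\"older's inequality with the exponents $p,p'$ and the weight prescribed in Theorem \ref{mainthm1}: no weight for $l>-\frac12$, the weight $z^{-p/p'}$ for $l=-\frac12$. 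This extracts the factor $\norm{q}_{L^p((0,\sqrt\xi])}$ (resp.\ $\norm{q}_{L^p((0,\sqrt\xi],z^{-p/p'})}$); what remains is an $L^{p'}$-norm of $v$ against a power of $w$, which is finite \emph{exactly} under the stated restriction relating $p$ and $l$, and the number $\alpha$ is precisely the slack one leaves in the borderline exponent --- which produces the $\frac1\alpha(\sqrt\xi+\sqrt\eta)^{2\alpha}$ prefactor and reproduces the power $(\xi-\eta)^{l+\frac1{2p'}-\alpha}$ (resp.\ $(\xi-\eta)^{l+\frac1{p'}-\alpha}$, together with the harmless factor $(\max(1,L))^{1/(2p')}$ coming from bounding a quantity $\le L$ when $l=-\frac12$). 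The outcome is \eqref{eq:uest} with the exponential deleted and $n=0$.

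The second step is an induction on $n$: writing $\rho:=\sqrt\xi+\sqrt\eta$, the claim is that $\abs{u_n(\xi,\eta)}$ is bounded by the right-hand side of \eqref{eq:uest} with the exponential deleted, times $\frac1{n!}\bigl(\frac{\ti{C}\,\rho^{1+1/p'}}{\alpha}\norm{q}_{L^p((0,\rho])}\bigr)^n$, with the obvious weighted modification for $l=-\frac12$. In the step $n\to n+1$ one substitutes the bound for $u_n$ into $u_{n+1}=\mathcal{T}u_n$: since $\sqrt z+\sqrt s\le\rho$ throughout $0BPA$, the slowly varying prefactors may be frozen at their values at $(\xi,\eta)$, and after passing to $(w,t)=(\sqrt z,\sqrt s)$ --- which converts $(zs)^{-1/2}\,dz\,ds$ into $4\,dw\,dt$ and $q(\sqrt z+\sqrt s)$ into $q(w+t)$ --- a further use of H\"older together with \eqref{eq:estv_11}--\eqref{eq:estv_22} reduces the integral to a one-dimensional Volterra-type integral in $w+t$; evaluating it reproduces $(\xi-\eta)$ with the same power and supplies (up to a constant) the extra factor $\frac1{n+1}\cdot\frac{\ti{C}\,\rho^{1+1/p'}}{\alpha}\norm{q}_{L^p((0,\rho])}$, the $1/(n+1)$ being the standard gain from integrating the radial factor carried by $u_n$. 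Summing the series gives \eqref{eq:uest}; the $1/n!$-bound makes $\sum u_n$ converge uniformly on compact subsets of $\{0<\eta\le\xi\le L\}$, so $u$ is continuous (each $u_n$ being continuous in $(\xi,\eta)$), and uniqueness follows because a difference $w=u-\ti u$ of two continuous solutions satisfies $w=\mathcal{T}^n w$ for all $n$, whence $\abs{w}\le\text{const}\cdot\frac1{n!}(\cdots)^n\to0$.

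The step I expect to be the main obstacle is the interaction of these two H\"older estimates with the pointwise bounds for $v_1$ and $v_2$: one has to track the loci where the Riemann function (or its $z$-derivative, already needed in the proof of Lemma \ref{lem:delta2}) degenerates --- the Cauchy line $s=0$, the characteristic $z=\eta$, and the vertex $z=s=0$ --- check that the hypotheses relating $p$ and $l$ are exactly what keeps the resulting $L^{p'}$-integrals finite, and extract the $\alpha$-slack so that the borderline power integrates to $\frac1\alpha(\cdot)^{2\alpha}$ and the iteration closes with a clean $1/n!$. The weighted bookkeeping for $l=-\frac12$ (the weight $z^{-p/p'}$ and the auxiliary factor $\max(1,L)$) is a further, more routine, source of care.
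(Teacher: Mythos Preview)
Your proposal is correct and follows the paper's approach: the paper carries out exactly this successive approximation, with the six-piece splitting (called $I_1$--$I_6$ for $u_0$ and $J_1$--$J_6$ for the higher iterates) dictated by \eqref{eq:estv_11}--\eqref{eq:estv_22}, and the $1/(n{+}1)$ gain in the induction coming from integrating the factor $(\sqrt z+\sqrt s)^{n(1+1/p')}$ carried by $u_n$ over the $s$-variable.

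One clarification on the point you yourself flag as the main obstacle: the prefactor $1/\alpha$ does \emph{not} come from a borderline power being barely integrable --- the non-logarithmic pieces $I_1,I_2,I_4,I_5$ are handled by a plain $(p,p')$ H\"older and produce no $\alpha$ at all. It arises entirely from the logarithmic singularity of $v$ at $z=\eta$ (the pieces $I_3,I_6$ and their analogues $J_3,J_6$). The paper isolates it via a \emph{three}-exponent H\"older with indices $p,\ \frac{1}{2\alpha},\ \frac{p'}{1-2\alpha p'}$, combined with the auxiliary estimate $\|\log\|_{L^{1/(2\alpha)}((0,\tilde z])}\le \tfrac{C}{2\alpha}$ (Lemma~\ref{lem:logint}). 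Your phrase ``slack in the borderline exponent'' describes the equivalent shortcut of absorbing $\lvert\log u\rvert\le \tfrac{C}{\alpha}u^{-2\alpha}$ pointwise before a two-exponent H\"older; either route yields the same $\tfrac{1}{\alpha}(\sqrt\xi+\sqrt\eta)^{2\alpha}$.
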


We intend to represent $u$ as a series $u=u_0+\sum_{n=1}^{\infty} u_n$, where the $u_n$'s are defined recursively as follows:

\begin{align} 
u_0(\xi,\eta)&:= \frac14 \int_{0}^{\xi} v(z,0) q(\sqrt{z}) z^{l-\frac12} dz \nn \\
u_{n+1}(\xi,\eta)&:= \frac14 \iint_{0 B P A}  (zs)^{-\frac12} v(z,s)q(\sqrt{z}+\sqrt{s})u_n(z,s)  dz ds. \label{def:un}
\end{align}
The crucial point here is to find appropriate estimates for the iterates, such that the series that defines $u$ converges. This will be done carefully in several steps.
We start with the following lemma, which will act as a useful tool to estimate certain integral expressions:
\begin{lemma} \label{lem:logint}
Let $\gamma>1$ and $0\le \ti{z}\le 1$. Then we have:
\be
\norm{\log(z)}_{L^{\gamma}((0,\ti{z}])} \le C \gamma,
\ee
where the constant $C$ is independent from $\overline{z}$.
\end{lemma}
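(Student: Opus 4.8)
The plan is to reduce at once to the endpoint $\ti z = 1$, recognize the resulting integral as a Gamma function, and then control $\Gamma(\gamma+1)^{1/\gamma}$ by a multiple of $\gamma$. First I would note that since the integrand $\abs{\log z}^{\gamma}$ is nonnegative and $(0,\ti z]\subseteq(0,1]$ whenever $0\le\ti z\le1$, enlarging the domain only increases the integral:
\[
\int_0^{\ti z}\abs{\log z}^{\gamma}\,dz\le\int_0^1\abs{\log z}^{\gamma}\,dz .
\]
This already makes the bound independent of $\ti z$, so it suffices to estimate the right-hand side, on which $\abs{\log z}=-\log z$.

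Next, the substitution $z=\E^{-t}$, $dz=-\E^{-t}\,dt$, maps $(0,1]$ to $[0,\infty)$ and turns the integral into a standard one:
\[
\int_0^1\abs{\log z}^{\gamma}\,dz=\int_0^\infty t^{\gamma}\E^{-t}\,dt=\Gamma(\gamma+1),
\]
so that $\norm{\log(z)}_{L^{\gamma}((0,\ti z])}\le\Gamma(\gamma+1)^{1/\gamma}$. It then remains to show $\Gamma(\gamma+1)^{1/\gamma}\le C\gamma$ for all $\gamma>1$ with an absolute constant $C$. I would do this via the elementary inequality $\Gamma(\gamma+1)\le\gamma^{\gamma}$ for $\gamma\ge1$: setting $g(\gamma):=\gamma\log\gamma-\log\Gamma(\gamma+1)$ one has $g(1)=0$ and $g'(\gamma)=\log\gamma+1-\psi(\gamma+1)$, which is positive because $\psi(\gamma+1)<\log(\gamma+1)\le\log\gamma+\log2<\log\gamma+1$ for $\gamma\ge1$; hence $g\ge0$ on $[1,\infty)$, i.e. $\Gamma(\gamma+1)\le\gamma^{\gamma}$. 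Taking $\gamma$-th roots yields $\norm{\log(z)}_{L^{\gamma}((0,\ti z])}\le\gamma$, so the lemma holds with $C=1$. (Alternatively one could invoke Stirling's bound $\Gamma(\gamma+1)\le C_0\sqrt{\gamma}\,(\gamma/\E)^{\gamma}$ and observe that $C_0^{1/\gamma}\gamma^{1/(2\gamma)}$ stays bounded for $\gamma\ge1$.)

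The monotonicity reduction and the change of variables are immediate; the only step needing genuine — though very mild — work is the final one, namely bounding $\Gamma(\gamma+1)^{1/\gamma}$ by $O(\gamma)$ uniformly in $\gamma>1$, which is handled either by the convexity argument above or by Stirling.
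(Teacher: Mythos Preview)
Your proof is correct. Both you and the paper start with the same substitution $t=-\log z$, but you then diverge: you immediately enlarge the domain to $(0,1]$, obtain the complete Gamma value $\Gamma(\gamma+1)$, and finish with the elementary bound $\Gamma(\gamma+1)\le\gamma^{\gamma}$ (yielding $C=1$). The paper instead keeps the $\ti z$-dependence, identifies the integral as the \emph{incomplete} Gamma function $\Gamma(\gamma+1,-\log\ti z)$, invokes its large-argument asymptotics to obtain the sharper intermediate estimate $\norm{\log z}_{L^{\gamma}((0,\ti z])}\le C\,(-\log\ti z)\,\ti z^{1/\gamma}$, and only then maximizes over $\ti z\in(0,1]$ to arrive at $C\gamma$. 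Your route is more self-contained and avoids special-function asymptotics; the paper's route produces a $\ti z$-dependent bound along the way, though this extra information is not actually used in the applications (Lemmas~\ref{lem:u_0est}--\ref{lem:unest}), so nothing is lost by your simplification.
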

\begin{proof}
We first use the transformation $u=-\log(z)$ to get
\[
\int_{0}^{\ti{z}}(-\log(z))^{\gamma} dz= \int_{-\log(\ti{z})}^{\infty} u^{\gamma} \E^{-u} du =\Gamma(\gamma+1, -\log(\ti{z})),
\]
where $\Gamma(a, z)$ denotes the incomplete Gamma function, cf. \dlmf{8.2.2}. Furthermore $\Gamma(a, z)$ enjoys the following asymptotics(cf. \dlmf{8.11.2}):
\[
\Gamma(a, z)=z^{a-1} \E^{-z}+ \OO(z^{-1}), \quad z \to \infty,
\]
which lead to the estimate
\[
\norm{\log(z)}_{L^{\gamma}((0,\ti{z}])} \le C (-\log(\ti{z})) \ti{z}^{\frac{1}{\gamma}} \le C\gamma,
\]
where in the last inequality we used that the local maximum of $(-\log(\ti{z})) \ti{z}^{\frac{1}{\gamma}}$ on $[0,1]$ is a multiple of $\gamma$.
\end{proof}

In the next three lemmas we investigate the iterates $u_n$ and thus start with $u_0$:

\begin{lemma} \label{lem:u_0est}
The following estimates hold:

\begin{align} \label{eq:estu_0}
\abs{u_0(\xi,\eta)} &\le \frac{C}{\alpha} \norm{q}_{L^p( (0,\sqrt{\xi}+\sqrt{ \eta} ] )}(\sqrt{\xi}+\sqrt{\eta})^{2\alpha}(\xi-\eta)^{l+\frac{1}{2p'}-\alpha}, \quad  l>-\frac12 \\
\abs{u_0(\xi,\eta)} &\le \frac{C}{\alpha} \norm{q}_{L^p( (0,\sqrt{\xi}+\sqrt{ \eta} ], z^{-\frac{p}{p'}} )}(\max(1,L))^{\frac{1}{2p'}}(\sqrt{\xi}+\sqrt{\eta})^{2\alpha}\\
&\times (\xi-\eta)^{-\frac12+\frac{1}{p'}-\alpha}, \quad l=-\frac12, \nn
\end{align}

where the constant $C$ depends on $l$.
\end{lemma}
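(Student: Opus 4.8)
The plan is to bound $u_0(\xi,\eta) = \frac14 \int_0^\xi v(z,0) q(\sqrt z) z^{l-\frac12}\,dz$ by inserting the pointwise estimates for $v=v(z,0;\eta,\xi)$ from Lemma \ref{lem:vest} (with $s=0$), splitting the integral at the threshold $z_1 = z_1(0)$ into the ``algebraic'' region $z\in(z_1,\xi)$, where \eqref{eq:estv_11} gives $|v|\le C_1(z-\eta)^l(\xi)^l z^{-2l}$, and the ``logarithmic'' region $z\in(\eta,z_1)$, where \eqref{eq:estv_12} gives the extra $\log$-factor. (On $(0,\eta)$ one has $v=v_2$, but with $s=0$ one checks $z_2(0)=0$ so that region is empty, or is handled by \eqref{eq:estv_22} with a log-factor exactly as in the second region.) In each region I would apply Hölder's inequality with exponents $p,p'$ to separate $\|q\|_{L^p}$ from an $L^{p'}$-norm of the remaining weight, and then estimate that weight integral explicitly.

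The key computational steps are as follows. First, in the algebraic region: after Hölder, one must bound $\big\| (z-\eta)^l \xi^l z^{-2l} z^{l-\frac12}\big\|_{L^{p'}((z_1,\xi])} = \xi^l \big\| (z-\eta)^l z^{-l-\frac12}\big\|_{L^{p'}}$; substituting $z = \eta + (\xi-\eta)t$ or simply bounding $z\ge\eta$ and integrating $(z-\eta)^{l}$-type powers produces, after collecting exponents, a factor of the form $(\xi-\eta)^{l+\frac1{p'}-\frac12+\frac1{p'}}$ — but the sharp statement requires introducing the free parameter $\alpha$, i.e. one writes $z^{-l-\frac12} = z^{-l-\frac12+\alpha}\cdot z^{-\alpha}$ (for $l>-\frac12$) or the analogous split in the $l=-\frac12$ weighted case, bounds $z^{-\alpha}$ by a constant times $(\sqrt\xi+\sqrt\eta)^{-2\alpha}$ pulled out, wait — actually $\xi^{\text{power}}$ gets absorbed into $(\sqrt\xi+\sqrt\eta)^{2\alpha}$ — and keeps the borderline-integrable power $z^{-l-\frac12+\alpha}$, whose $L^{p'}$ integral over $(0,\xi]$ converges precisely because $\alpha<l+\frac1{2p'}$ (resp. $\alpha<-\frac12+\frac1{p'}$), yielding the $\frac1\alpha$ prefactor and the exponent $l+\frac1{2p'}-\alpha$. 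Second, in the logarithmic region, the same Hölder step leaves an extra $\|\log(\cdots)+1\|$ inside the $L^{p'}$ norm; here I would invoke Lemma \ref{lem:logint} to absorb the logarithm at the cost of a harmless constant (the $\gamma = p'$ factor from that lemma being a fixed $l$-dependent constant once $p$ is fixed, or more carefully tracked if uniformity in $p$ is wanted), reducing it to the same algebraic estimate. The factor $x^{2\alpha}=(\sqrt\xi+\sqrt\eta)^{2\alpha}$ arises by replacing all $\xi$-powers and $z$-powers-at-the-upper-limit by $(\sqrt\xi+\sqrt\eta)$, and in the $l=-\frac12$ case the weight $z^{-p/p'}$ in the norm forces the extra $(\max(1,L))^{1/(2p')}$ through a bound of the form $z^{1/p'}\le (\sqrt\xi+\sqrt\eta)^{1/p'}\le (\max(1,L))^{1/(2p')}(\sqrt\xi+\sqrt\eta)^{\text{something}}$ — basically comparing $\|q\|_{L^p}$ to $\|q\|_{L^p(z^{-p/p'})}$ on an interval of length $\le 2L$ up to that constant.

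I expect the main obstacle to be the bookkeeping of exponents in the $l=-\frac12$ case together with the weight $z^{-p/p'}$: one has to route the powers of $z$ so that the Hölder-dual integral is against the weighted measure, simultaneously extract $(\xi-\eta)^{-\frac12+\frac1{p'}-\alpha}$, the prefactor $\frac1\alpha$, the factor $(\sqrt\xi+\sqrt\eta)^{2\alpha}$, and the $(\max(1,L))^{1/(2p')}$, without losing the correct constraint $0<\alpha<-\frac12+\frac1{p'}$ (which is exactly what guarantees the dual integral $\int_0^\xi z^{(-l-\frac12+\alpha)p' }\cdots\,dz$ converges). The $l>-\frac12$ case is more routine, but still requires care that the split power $z^{-l-\frac12+\alpha}$ is genuinely $L^{p'}$-integrable near $0$, i.e. $(-l-\frac12+\alpha)p'>-1$, equivalently $\alpha> -\frac1{2p'}$ hmm — rather $\alpha < l + \frac1{2p'}$ after rearranging — which is precisely the hypothesis; and that in the sub-case $l\ge 0$ versus $-\frac12<l<0$ the allowed range of $\alpha$ (namely $\alpha<\frac1{2p'}$ vs.\ $\alpha<l+\frac1{2p'}$) comes out correctly from requiring both the $z^{-2l}$ factor and the endpoint behavior to be controlled. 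Once these exponent constraints are verified the estimates \eqref{eq:estu_0} follow by simply collecting terms.
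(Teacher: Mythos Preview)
Your plan has the right overall shape (insert the pointwise bounds from Lemma~\ref{lem:vest} with $s=0$, split, apply H\"older), but there are two genuine gaps.

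\medskip
\textbf{First, the region $(0,\eta)$ is not trivial.} Your claim that ``$z_2(0)=0$ so that region is empty'' is false: from \eqref{def:z12} with $s=0$ one computes $z_2(0)=\dfrac{\xi\eta}{2\xi-\eta}\in(0,\eta)$. Hence on $(0,\eta)$ one needs \emph{both} estimates \eqref{eq:estv_21} and \eqref{eq:estv_22}. The piece on $(0,z_2(0))$ governed by \eqref{eq:estv_21} has a genuinely different structure --- the bound carries a factor $(\xi-\eta)^{1+2l}(\eta-z)^{-l-1}(\xi)^{-l-1}$ --- and the paper treats it separately (the term $I_4$), even distinguishing the sub-cases $(-l-1)p'+1\lessgtr 0$. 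This is not absorbed by ``the same log-estimate as in the second region''.

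\medskip
\textbf{Second, the source of the $\tfrac1\alpha$ prefactor is misidentified.} In the non-logarithmic pieces ($I_1,I_2,I_4,I_5$ in the paper) no $\tfrac1\alpha$ appears at all; after the ratio estimate $\frac{(z-\eta)^l}{z^l}\le C_l\frac{(\xi-\eta)^l}{\xi^l}$ (and its analogue), a straight two-index H\"older gives a bound $C(\xi-\eta)^{l+\frac{1}{2p'}}$, and one simply inserts $\big(\frac{(\sqrt\xi+\sqrt\eta)^2}{\xi-\eta}\big)^\alpha\ge 1$ to get the factor $(\sqrt\xi+\sqrt\eta)^{2\alpha}(\xi-\eta)^{-\alpha}$. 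The $\tfrac1\alpha$ comes \emph{only} from the logarithmic pieces $I_3,I_6$, and not via a borderline-convergent power integral as you suggest. There the paper uses a \emph{three-index} H\"older inequality with exponents $\frac{p'}{1-2\alpha p'},\ \frac{1}{2\alpha},\ p$, isolating the factor
\[
\Big(\int \log^{1/(2\alpha)}\!\big(\tfrac{\sqrt\eta}{z-\sqrt\eta}\big)\,dz\Big)^{2\alpha},
\]
and then applies Lemma~\ref{lem:logint} with $\gamma=\tfrac{1}{2\alpha}$, which yields the bound $C\gamma=\tfrac{C}{2\alpha}$. Your proposed two-index H\"older followed by Lemma~\ref{lem:logint} with $\gamma=p'$ would only give a constant depending on $p$, not the required $\tfrac1\alpha$; and your alternative mechanism (splitting $z^{-l-\frac12}=z^{-l-\frac12+\alpha}\cdot z^{-\alpha}$ and integrating a borderline power) does not produce $\tfrac1\alpha$ either --- the resulting constant is $\frac{1}{(\alpha-l-\tfrac12)p'+1}$, which is not of the form $\tfrac{C}\alpha$ uniformly in $l,p$.
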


\begin{proof}
First we split the integral for $u_0$ into two parts:
\[
\frac14 \int_{0}^{\xi} v(z,0) q(\sqrt{z}) z^{l-\frac12} dz=\frac14 \int_{0}^{\eta} v(z,0) q(\sqrt{z}) z^{l-\frac12} dz+\frac14 \int_{\eta}^{\xi} v(z,0) q(\sqrt{z}) z^{l-\frac12} dz
\]	
and estimate each part separately. We then estimate the integrals from $\eta$ to $\xi$ and from $0$ to $\eta$ respectively and use \eqref{eq:estv_11}--\eqref{eq:estv_22} to further decompose it into three more parts:

\begin{align*}
& \quad \frac14 \int_{\eta}^{\xi} \abs{v(z,0) q(\sqrt{z})} z^{l-\frac12} dz \le C_2 \eta^l \int_{\eta}^{z_1(0)} \frac{(\xi-z)^l}{z^{l+\frac12}} \abs{q(\sqrt{z})} dz \\
+C_1 \xi^l \int_{z_1(0)}^{\xi} &\frac{(z-\eta)^l}{z^{l+\frac12}} \abs{q(\sqrt{z})} dz
+C_2 \eta^l \int_{\eta}^{z_1(0)} \frac{(\xi-z)^l}{z^{l+\frac12}}  \log{ \left( \frac{(\xi-z) \eta }{(z-\eta) \xi}\right) } \abs{q(\sqrt{z})} dz \\
     & =:I_1+I_2+I_3
\end{align*}
and
\begin{align*}
 \frac14 \int_{0}^{\eta} &\abs{v(z,0) q(\sqrt{z})} z^{l-\frac12} dz \le C_4 \frac{(\xi-\eta)^{1+2l}}{\xi^{1+l}} \int_{0}^{z_2(0)} \frac{z^{l+\frac12}}{(\eta-z)^{l+1}} \abs{q(\sqrt{z})} dz\\
+C_5 (\xi-\eta)^l &\int_{z_2(0)}^{\eta} z^{-\frac12} \abs{q(\sqrt{z})} dz \eta^l+ C_5 (\xi-\eta)^l \int_{z_2(0)}^{\eta} \log{ \left( \frac{z(\xi-\eta) }{(\eta-z) \xi}\right) } \abs{q(\sqrt{z})} dz \\
&=:I_4+I_5+I_6,
\end{align*}
where $\displaystyle z_1(0)$ and $z_2(0)$ are given via \eqref{def:z12}. To bound $I_1$, we first note that
$\displaystyle \frac{\xi-\eta}{2\eta} \le \frac{\xi-z}{z} \le \frac{\xi-\eta}{\eta}$, i.e. $ \displaystyle \frac{(\xi-z)^l}{z^l} \le C_l \frac{(\xi-\eta)^l}{\eta^l}$, where $C_l=\max(1,2^{-l})$. 
Hence in the case $l>-\frac12$:
\begin{align*}
\abs{I_1} &\le 2C_l C_2 (\xi-\eta)^l \int_{\eta}^{z_1(0)}z^{-\frac12} \abs{q(\sqrt{z})} dz\\
%=2C_l C_2 (\xi-\eta)^l 2 \int_{\sqrt{\eta}}^{\sqrt{z_1(0)}} \abs{q(z)} dz \\
&\le 2C_l C_2 (\xi-\eta)^l  \norm{q}_{L^p( (0,\sqrt{\xi} ] )} \left( \int_{\sqrt{\eta}}^{\sqrt{z_1(0)}} 1 dz \right)^{\frac{1}{p'}}\\
%&=\ 2C_l C_2 (\xi-\eta)^l  \norm{q}_{L^p( (0,\sqrt{\xi} ] )}\left( \left( \frac{2\xi\eta}{\xi+\eta}\right)^{\frac12}-\eta^{\frac12} \right)^{\frac{1}{p'}}\\
&\le \ 2 C_l C_2 (\xi-\eta)^l\left(\frac{\eta}{\xi+\eta}(\xi-\eta) \right)^{\frac{1}{2p'}} \norm{q}_{L^p( (0,\sqrt{\xi} ] )} \\
&\le \ 2C_l C_2 (\sqrt{\xi}+\sqrt{\eta})^{2\alpha}(\xi-\eta)^{l+\frac{1}{2p'}-\alpha} \norm{q}_{L^p( (0,\sqrt{\xi}+\sqrt{ \eta} ] )},
\end{align*}
where we have used H\"older's inequality, the elementary estimate $\sqrt{a-b} \geq \sqrt{a}-\sqrt{b}$ for $a\geq b$ and the fact that $\displaystyle \left( \frac{(\sqrt{\xi}+\sqrt{\eta})^2}{\xi-\eta} \right)^{\alpha} \geq 1$. In the case $l=-\frac12$ we proceed as follows:
\begin{align*}
\abs{I_1} &\le 2C_{-\frac12} C_2 (\xi-\eta)^{-\frac12}  \int_{\sqrt{\eta}}^{\sqrt{z_1(0)}} z^{\frac{1}{p'}} z^{-\frac{1}{p'}} \abs{q(z)} dz \\
&\le 2C_{-\frac12} C_2 (\xi-\eta)^{-\frac12} \left( \int_{\sqrt{\eta}}^{\sqrt{z_1(0)}} z dz \right)^{\frac{1}{p'}} \norm{ q}_{L^p( (0,\sqrt{\xi} ] , z^{-\frac{p}{p'}})} \\
&\le 2C_{-\frac12} C_2 (\sqrt{\xi}+\sqrt{\eta})^{2 \alpha} (\xi-\eta)^{-\frac12+\frac{1}{p'}-\alpha}  \norm{ q}_{L^p( (0,\sqrt{\xi}+\sqrt{ \eta} ], z^{-\frac{p}{p'}})}.
\end{align*}
The calculations for $I_2$ are similar, we just use $ \displaystyle \frac{(z-\eta)^l}{z^l} \le C_l \frac{(\xi-\eta)^l}{\xi^l}$ instead. Let's continue with $I_3$, again in the case $l>-\frac12$ first:
\begin{align*}
&\abs{I_3} \le C_l C_1 (\xi-\eta)^l \int_{\eta}^{z_1(0)} z^{-\frac12} \log\left(\frac{\eta}{z-\eta}\right)  \abs{q(\sqrt{z})} dz \\
&=2C_l C_1 (\xi-\eta)^l \int_{\sqrt{ \eta}}^{\sqrt{z_1(0)}} \log\left(\frac{\eta}{z^2-\eta}\right) \abs{q(z)} dz \\
&\le 2C_l C_1 (\xi-\eta)^l \int_{\sqrt{ \eta}}^{\sqrt{z_1(0)}} \log\left(\frac{\sqrt{\eta}}{z-\sqrt{\eta}}\right) \abs{q(z)} dz \\
&\le 2C_l C_1 (\xi-\eta)^l \norm{q}_{L^p( (0,\sqrt{\xi} ] )} \left(  \int_{\sqrt{ \eta}}^{\sqrt{z_1(0)}} dz \right)^{\frac{1}{p'}-2\alpha} \\
&\times \left( \int_{\sqrt{ \eta}}^{\sqrt{z_1(0)}} \log^{\frac{1}{2\alpha}}\left(\frac{\sqrt{\eta}}{z-\sqrt{\eta}}\right)dz \right)^{2\alpha} \\
&\le 2C_l C_1 (\xi-\eta)^l (\xi-\eta)^{\frac{1}{2p'}-\alpha} \left( \sqrt{\eta} \int_{0}^{\sqrt{\frac{\xi-\eta}{\xi+\eta}}} (-\log(u))^{\frac{1}{2\alpha}}du\right)^{2\alpha} \norm{q}_{L^p( (0,\sqrt{\xi} ] )} \\
&\le 2C_l C_1 \frac{(\sqrt{\xi}+\sqrt{\eta})^{2\alpha}}{ \alpha}(\xi-\eta)^{l+\frac{1}{2p'}-\alpha}  \norm{q}_{L^p( (0,\sqrt{\xi}] )}, 
%\le \frac{C_1}{\alpha} (\xi-\eta)^{l+\frac{1}{2p'}-\alpha}  \norm{q}_{L^p( (0,\sqrt{\xi}+\sqrt{ \eta} ] )}, 
\end{align*}
where in the fourth step we used H\"older's inequality with indices $\frac{p'}{1-2\alpha p'}$, $\frac{1}{2\alpha}$ and $p$, in the fifth step we did a linear transformation inside the logarithmic integral,  %($u=\frac{z-\sqrt{\eta}}{\sqrt{\eta}}$)
and in the penultimate step we applied Lemma \ref{lem:logint}.
In the case $l=-\frac12$, we have to make similar changes as for $I_1$, namely the fourth step will read as follows:
\[
\left(  \int_{\sqrt{ \eta}}^{\sqrt{z_1(0)}} z^{\frac{1}{p'}\frac{p'}{1-p'\alpha}}dz \right)^{\frac{1}{p'}-\alpha} \left( \int_{\sqrt{ \eta}}^{\sqrt{z_1(0)}} \log^{\frac{1}{\alpha}}\left(\frac{\sqrt{\eta}}{z-\sqrt{\eta}}\right)dz \right)^{\alpha}\norm{ q}_{L^p( (0,\sqrt{\xi} ] , z^{-\frac{p}{p'}})},
\]
while the first integral can be further estimated:
\[
\left(  \int_{\sqrt{ \eta}}^{\sqrt{z_1(0)}} z^{\frac{1}{p'}\frac{p'}{1-p'\alpha}}dz \right)^{\frac{1}{p'}-\alpha} \le z_1(0)^{\frac{\alpha}{2}} \left( \int_{\sqrt{\eta}}^{\sqrt{z_1(0)}} z dz \right)^{\frac{1}{p'}-\alpha},
\]
and now we can proceed as for $I_1$.
% and the logarithmic integral can of course be treated the same way as in the case $l>-\frac12$. 
For $I_4$ in the case $l>-\frac12$ we can now deduce the following inequality:

\begin{align*}
\abs{I_4} &\le C_3 (\xi-\eta)^{1+2l} \xi^{-1-l} \int_{0}^{z_2(0)} z^{l+1-\frac{1}{2p'}} z^{\frac{1}{2p'}-\frac12} (\eta-z)^{-l-1 } \abs{q(\sqrt{z})} dz  \\
 &\le C_3 (\xi-\eta)^{1+2l} \xi^{-1-l} z_2(0)^{l+1-\frac{1}{2p'}} \left( \int_{0}^{z_2(0)} (\eta-z)^{p'(-l-1)}dz \right)^{\frac{1}{p'}} \\
&\times \left( \int_{0}^{z_2(0)} z^{p(\frac{1}{2p'}-\frac12)} \abs{q(\sqrt{z})}^p  dz \right)^{\frac1p} 
\end{align*}
To further estimate this expression, we need to distinguish cases. If $(-l-1)p'+1<0$, we get: 
\begin{align*}
&\abs{I_4} \le \frac{C_3}{\abs{(-l-1)p'+1}^{\frac{1}{p'}}} (\xi-\eta)^{1+2l} \xi^{-1-l} (\xi \eta)^{l+1-\frac{1}{2p'}}\\
&\times (2\xi-\eta)^{-l-1+\frac{1}{2p'}} \eta^{-l-1+\frac{1}{p'}}(\xi-\eta)^{-l-1+\frac{1}{p'}} (2\xi-\eta)^{l+1-\frac{1}{p'}} \norm{q}_{L^p( (0,\sqrt{\xi}] )}\\
&=\frac{C_3}{\abs{(-l-1)p'+1}^{\frac{1}{p'}}} (\xi-\eta)^{l+\frac{1}{2p'}} \left( \frac{\xi-\eta}{2\xi-\eta}\right)^{\frac{1}{2p'}} \left( \frac{\eta}{\xi}\right)^{\frac{1}{2p'}}\norm{q}_{L^p( (0,\sqrt{\xi}] )} \\
&\le \frac{C_3}{\abs{(-l-1)p'+1}^{\frac{1}{p'}}} (\sqrt{\xi}+\sqrt{\eta})^{2\alpha} (\xi-\eta)^{l+\frac{1}{2p'}-\alpha}  \norm{q}_{L^p( (0,\sqrt{\xi}+\sqrt{ \eta} ] )},
\end{align*}
A similar reasoning in the case $(-l-1)p'+1>0$ yields the same result. 
%\begin{align*}
%&\abs{I_4} \le \frac{C_3}{\abs{(-l-1)p'+1}^{\frac{1}{p'}}} (\xi-\eta)^{1+2l} \xi^{-1-l} (\xi \eta)^{l+1-\frac{1}{2p'}}\\
%&\times (2\xi-\eta)^{-l-1+\frac{1}{2p'}} \eta^{-l-1+\frac{1}{p'}} \norm{q}_{L^p( (0,\sqrt{\xi}] )}\\
%&=\frac{C_3}{\abs{(-l-1)p'+1}^{\frac{1}{p'}}} (\xi-\eta)^{l+\frac{1}{2p'}} \left( \frac{\xi-\eta}{2\xi-\eta}\right)^{l+1-\frac{1}{2p'}} \left( \frac{\eta}{\xi}\right)^{\frac{1}{2p'}} \norm{q}_{L^p( (0,\sqrt{\xi}] )} \\
%&\le \frac{C_3}{\abs{(-l-1)p'+1}^{\frac{1}{p'}}} (\sqrt{\xi}+\sqrt{\eta})^{2\alpha} (\xi-\eta)^{l+\frac{1}{2p'}-\alpha} \norm{q}_{L^p( (0,\sqrt{\xi}+\sqrt{ \eta} ] )}.
%\end{align*}
%Since $p'$ only depends on $l$, we will also in this case end up with a constant only depending on $l$. 
One also has to treat the case $(-l-1)p'+1=0$, but we omit the details here.
Since $p'$ only depends on $l$, we will end up with a constant only depending on $l$. 
%Because of the expression $\frac{1}{\abs{(-l-1)p'+1}^{\frac{1}{p'}}}$(which can have a singularity for certain values of $p'$), we will have to consider constants, that depend on $p$. 
%(one of the inner integrals would result in the logarithm, but this wouldn't cause any further difficulties). 
%\begin{align*}
%\abs{I_4} &\le C_3(\xi-\eta)^{1+2l} \xi^{-1-l} (\xi \eta)^{l+1-\frac{1}{2p'}} (2\xi-\eta)^{-l-1+\frac{1}{2p'}} \abs{\log\left(\frac{\xi-\eta}{2\xi-\eta}\right)}^{\frac{1}{p'}} \norm{q}_{L^p( (0,\sqrt{\xi}] )}\\
%&\le C_3 (\xi-\eta)^{l+\frac{1}{2p'}} \left( \frac{\eta}{\xi}\right)^{\frac{1}{2p'}} \left( \frac{\xi-\eta}{2\xi-\eta}\right)^{\frac{1}{2p'}} \abs{\log\left(\frac{\xi-\eta}{2\xi-\eta}\right)}^{\frac{1}{p'}} \norm{q}_{L^p( (0,\sqrt{\xi}] )} \\
%&\le C_3 (\sqrt{\xi}+\sqrt{\eta})^{2\alpha} (\xi-\eta)^{l+\frac{1}{2p'}-\alpha} \norm{q}_{L^p( (0,\sqrt{\xi}+\sqrt{ \eta} ] )}.
%\end{align*}
In the $-\frac12$-case, with similar changes as in $I_1-I_3$, we get an additional factor $z(0)^\frac{1}{2p'}$.
%, which can be absorbed by the constant. 
For the remaining part of this lemma, we will only focus on the computations in the case $l>-\frac12$ in order to avoid writing down the same changes all the time. Next, for $I_5$ we get:

\begin{align*}
\abs{I_5} \le C_4 (\xi-\eta)^l \int_{\sqrt{z_2(0)}}^{\sqrt{\eta}} &\abs{q(z)} dz \le C_4 (\xi-\eta)^l  \norm{q}_{L^p( (0,\sqrt{\xi} ] )} \left( \int_{\sqrt{z_2(0)}}^{\sqrt{\eta}} 1 dz \right)^{\frac{1}{p'}}\\
 &\le \ C_4 (\sqrt{\xi}+\sqrt{\eta})^{2\alpha}(\xi-\eta)^{l+\frac{1}{2p'}-\alpha} \norm{q}_{L^p( (0,\sqrt{\xi}+\sqrt{ \eta} ] )}.
\end{align*}

It remains to look at $I_6$, and with similar arguments as for $I_3$, we obtain:

\begin{align*}
&\abs{I_6} \le C_4 (\xi-\eta)^l \int_{\sqrt{z_2(0)}}^{\sqrt{\eta}} z^{-\frac12} \log \left( \frac{\sqrt{\eta}}{\sqrt{\eta}-z} \right) \abs{q(z)} dz \\
&\le C_4 (\xi-\eta)^l \left(  \int_{\sqrt{ z_2(0)}}^{\sqrt{\eta}} dz \right)^{\frac{1}{p'}-2\alpha} \left( \int_{\sqrt{z_2(0)}}^{\sqrt{\eta}} \log^{\frac{1}{2\alpha}}\left(\frac{\sqrt{\eta}}{\sqrt{\eta}-z}\right)dz \right)^{2\alpha}\norm{q}_{L^p( (0,\sqrt{\xi} ] )}\\
&\le C_4 (\xi-\eta)^l (\xi-\eta)^{\frac{1}{2p'}-\alpha} \left( \sqrt{\eta} \int_{0}^{\sqrt{\frac{\xi-\eta}{2\xi-\eta}}} (-\log(u))^{\frac{1}{2\alpha}}du\right)^{2\alpha} \norm{q}_{L^p( (0,\sqrt{\xi} ] )} \\
&\le \frac{C_4 (\sqrt{\xi}+\sqrt{\eta})^{2\alpha}}{\alpha} (\xi-\eta)^{l+\frac{1}{2p'}-\alpha}  \norm{q}_{L^p( (0,\sqrt{\xi}+\sqrt{ \eta} ] )}.
\end{align*}

\end{proof}
In the next lemma we are concerned with proving an inequality for $u_1$:
\begin{lemma} \label{lem:u_1est}
The following estimates hold:

\begin{align} 
	\abs{u_1(\xi,\eta)} &\le  \frac{\ti{C}}{\alpha^2} \norm{q}_{L^p( (0,\sqrt{\xi}+\sqrt{ \eta} ] )}^2 \nn \\
	&\times (\sqrt{\xi}+\sqrt{\eta})^{1+\frac{1}{p'}+2\alpha}(\xi-\eta)^{l+\frac{1}{2p'}-\alpha},\quad l>-\frac12 \nn \\
	\abs{u_1(\xi,\eta)} &\le \frac{\ti{C}}{\alpha^2} \norm{ q}_{L^p( (0,\sqrt{\xi}+\sqrt{ \eta} ], z^{-\frac{p}{p'}})}^2 (\max(1,L))^{\frac{2}{2p'}} \nn \\
	&\times (\sqrt{\xi}+\sqrt{\eta})^{1+\frac{2}{p'}+2\alpha}(\xi-\eta)^{-\frac12+\frac{1}{p'}-\alpha}, \quad l=-\frac12. \label{eq:estu_1}
\end{align}

The constant $\ti{C}$ depends on $l$ and it may differ from $C$ in Lemma \ref{lem:u_0est}. 
\end{lemma}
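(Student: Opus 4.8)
The plan is to estimate $u_1$ by inserting the bound on $u_0$ from Lemma \ref{lem:u_0est} into the defining integral \eqref{def:un}, namely
\[
u_1(\xi,\eta) = \frac14 \iint_{0BPA} (zs)^{-\frac12} v(z,s) q(\sqrt z + \sqrt s) u_0(z,s)\, dz\, ds,
\]
and then carrying out the same kind of splitting of the integration region and of the Riemann function $v$ that was used for $u_0$. First I would record the pointwise bound $|u_0(z,s)| \le \frac{C}{\alpha} \|q\|_{L^p((0,\sqrt z + \sqrt s])} (\sqrt z + \sqrt s)^{2\alpha} (z-s)^{l + \frac{1}{2p'} - \alpha}$ (in the case $l > -\tfrac12$; the $l=-\tfrac12$ case is analogous with the weighted norm and the extra $(\max(1,L))^{1/2p'}$ factor). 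Since the $L^p$-norm of $q$ over $(0,\sqrt z + \sqrt s]$ is monotone and $\sqrt z + \sqrt s \le \sqrt\xi + \sqrt\eta$ throughout the region $0BPA$, I can replace it by the constant $\|q\|_{L^p((0,\sqrt\xi+\sqrt\eta])}$ and likewise bound $(\sqrt z + \sqrt s)^{2\alpha}$ by $(\sqrt\xi+\sqrt\eta)^{2\alpha}$, pulling these outside the integral. What remains is an integral of $(zs)^{-1/2} |v(z,s)| |q(\sqrt z + \sqrt s)| (z-s)^{l + \frac{1}{2p'}-\alpha}$ over the characteristic triangle.

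Next I would split this remaining integral exactly as in the proof of Lemma \ref{lem:u_0est}: fix $s$, split the inner $z$-integral at $\eta$ (or rather at the appropriate characteristic through the point), and on each piece use the relevant estimate \eqref{eq:estv_11}--\eqref{eq:estv_22} for $v_1$ or $v_2$, producing a finite number of terms (the analogues of $I_1,\dots,I_6$), some of which carry an extra logarithmic factor. For each such term I would apply Hölder's inequality in the inner variable — with the same triple of exponents $\frac{p'}{1-2\alpha p'}$, $\frac{1}{2\alpha}$, $p$ when a logarithm is present, and the simpler pair $p'$, $p$ otherwise — together with Lemma \ref{lem:logint} to control the $L^{1/2\alpha}$-norm of the logarithm, contributing the factor $\frac1\alpha$. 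This accounts for one power of $\frac1\alpha$ and one factor $\|q\|_{L^p}$; combined with the $\frac1\alpha$ and the $\|q\|_{L^p}$ already extracted from $u_0$, this yields the claimed $\frac{\ti C}{\alpha^2}\|q\|_{L^p}^2$. The elementary estimates $\sqrt{a-b}\ge\sqrt a - \sqrt b$, $\frac{\xi-z}{z}\asymp\frac{\xi-\eta}{\eta}$ on the relevant ranges, and $\big((\sqrt\xi+\sqrt\eta)^2/(\xi-\eta)\big)^\alpha\ge1$ will again be used to convert powers of $\xi$, $\eta$, $\xi-z$, etc., into the target powers $(\sqrt\xi+\sqrt\eta)^{\cdot}(\xi-\eta)^{l+\frac{1}{2p'}-\alpha}$.

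The bookkeeping of the power of $(\sqrt\xi+\sqrt\eta)$ is the step I would be most careful about. Relative to $u_0$, there is the explicit factor $(\sqrt\xi+\sqrt\eta)^{2\alpha}$ pulled out of $u_0$, plus the $(zs)^{-1/2}$ weight and the extra $(z-s)^{l+\frac{1}{2p'}-\alpha}$ from $u_0$, which after the Hölder step and the various substitutions should combine to give an additional overall factor $(\sqrt\xi+\sqrt\eta)^{1+\frac{1}{p'}}$ (this is precisely the factor that appears raised to the $n$-th power in the series and is responsible for the exponential in Theorem \ref{thm:estu}). Matching this up term by term — and checking that no term produces a worse power — is the main obstacle; everything else is a routine repetition of the computations already performed for $u_0$. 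I would therefore present the argument by reducing to those computations: state the substitution $|u_0|\le\cdots$, note that the region and the $v$-splitting are identical, and then treat in detail only one representative term (say the analogue of $I_3$, since it carries the logarithm and thus the $\frac1\alpha$), indicating that the others are handled the same way, as was done in Lemma \ref{lem:u_0est}.
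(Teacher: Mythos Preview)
Your proposal is correct and follows essentially the same route as the paper: insert the $u_0$ bound, split the double integral via the Riemann-function estimates \eqref{eq:estv_11}--\eqref{eq:estv_22} into the six pieces $J_1,\dots,J_6$ analogous to $I_1,\dots,I_6$, and on the logarithmic pieces apply H\"older with exponents $\tfrac{p'}{1-2\alpha p'},\tfrac{1}{2\alpha},p$ together with Lemma~\ref{lem:logint}. Two small differences worth noting: the paper keeps the factor $\norm{q}_{L^p((0,\sqrt z+\sqrt s])}$ inside the H\"older step rather than pulling it out by monotonicity (functionally equivalent here, though the ``inside'' version is what generalises cleanly to Lemma~\ref{lem:unest}), and besides the log-term $J_3$ the paper also works out $J_4$ in detail, since that piece again requires the case distinction on the sign of $(-l-1)p'+1$ --- you should not expect it to be a verbatim repetition of $I_4$ because of the extra $(z-s)^{l+\frac{1}{2p'}-\alpha}$ factor.
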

%{
%\setlength{\abovedisplayskip}{3pt}
%\setlength{\abovedisplayshortskip}{3pt}
%\setlength{\belowdisplayskip}{1pt}
%\setlength{\belowdisplayshortskip}{1pt}

\begin{proof}
Similarly as in Lemma \ref{eq:estu_1}, we split the corresponding integral:

\begin{align*}
&\frac14 \iint_{0 B P A}  (zs)^{-\frac12} v(z,s)q(\sqrt{z}+\sqrt{s})u_0(z,s)  dzds=\\
 &\frac14 \int_{0}^{\eta} \int_{\eta}^{\xi} (zs)^{-\frac12} v(z,s)q(\sqrt{z}+\sqrt{s})u_0(z,s)  dzds\\
 +&\frac14 \int_{0}^{\eta} \int_{s}^{\eta} (zs)^{-\frac12} v(z,s)q(\sqrt{z}+\sqrt{s})u_0(z,s)  dzds.
\end{align*}	
Let us denote $\ti{C_i}:=C C_i$, where $C$ is the constant obtained in Lemma \ref{eq:estu_1}, and the $C_i$'s again are taken from Lemma \ref{lem:vest}.
Using the results from Lemma \ref{lem:vest} and Lemma \ref{lem:u_0est} we end up with:

\begin{align*}
&\frac14 \int_{0}^{\eta} \int_{\eta}^{\xi} (zs)^{-\frac12} \abs{v(z,s)q(\sqrt{z}+\sqrt{s})u_0(z,s)}  dzds \\
&\le \ti{C_2} \int_{0}^{\eta} s^{-\frac12} \int_{\eta}^{z_1(s)} z^{-\frac12} \abs{q(\sqrt{z}+\sqrt{s})} (z-s)^{l+\frac{1}{2p'}-\alpha} \\
&\times(\xi-z)^{l} (\eta-s)^l (z-s)^{-2l} dzds \\
&+ \ti{C_1} \int_{0}^{\eta} s^{-\frac12} \int_{z_1(s)}^{\xi} z^{-\frac12} \abs{q(\sqrt{z}+\sqrt{s})} (z-s)^{l+\frac{1}{2p'}-\alpha} \\
&\times(z-\eta)^{l} (\xi-s)^{l} (z-s)^{-2l} dzds\\
&+ \ti{C_2} \int_{0}^{\eta} s^{-\frac12} \int_{\eta}^{z_1(s)} z^{-\frac12} \abs{q(\sqrt{z}+\sqrt{s})} (z-s)^{l+\frac{1}{2p'}-\alpha} (\xi-z)^{l} (\eta-s)^l \\
& \times (z-s)^{-2l} \log {   \frac{(\xi-z)(\eta-s)}{(z-\eta)(\xi-s)} } dz ds =:J_1+J_2+J_3,
\end{align*}

and similarly:
\begin{align*}
&\frac14 \int_{0}^{\eta} \int_{s}^{\eta} (zs)^{-\frac12} \abs{v(z,s)q(\sqrt{z}+\sqrt{s})u_0(z,s)}  dzds \\
&\le \ti{C_3} \int_{0}^{\eta} s^{-\frac12} \int_{s}^{z_2(s)} z^{-\frac12} \abs{q(\sqrt{z}+\sqrt{s})} (z-s)^{l+1+\frac{1}{2p'}-\alpha}\\
&\times  (\xi-\eta)^{1+2l} (\eta-z)^{-l-1} (\xi-s)^{-l-1} dzds \\
&+ \ti{C_4} \int_{0}^{\eta} s^{-\frac12} \int_{z_2(s)}^{\eta} z^{-\frac12} \abs{q(\sqrt{z}+\sqrt{s})} (z-s)^{l+\frac{1}{2p'}-\alpha} (\xi-\eta)^{l} (z-s)^{-l}  dzds\\
&+ \ti{C_4} \int_{0}^{\eta} s^{-\frac12} \int_{\eta}^{z_1(s)} z^{-\frac12} \abs{q(\sqrt{z}+\sqrt{s})} (z-s)^{l+\frac{1}{2p'}-\alpha} (\xi-\eta)^{l} (z-s)^{-l} \\
&\times \log {   \frac{(z-s)(\xi-\eta)}{(\eta-z)(\xi-s)} } dz ds =:J_4+J_5+J_6,
\end{align*}
where $z_1(s)$ and $z_2(s)$ are again given via \eqref{def:z12}. For the brevity of presentation, we omit the details for $J_1$ and $J_2$ and start by considering $J_3$. A similar reasoning as in the Lemma \ref{eq:estu_1} gives $\displaystyle \left( \frac{\xi-z}{z-s}\right)^l \le C_l \left( \frac{\xi-\eta}{\eta-s}\right)^l$.
Next, as for $I_3$, we use H\"older's inequality with indices $\frac{p'}{1-2\alpha p'}$, $\frac{1}{2\alpha}$ and $p$, the inequality $z_1(s)\le z_1(0)$, and Lemma \ref{lem:logint} to arrive at: 
\begin{align*}
\abs{J_3}\le &\ti{C_3} (\xi-\eta)^l \int_{0}^{\eta} (\sqrt{\xi}+\sqrt{s})^{\frac{1}{p'}}s^{-\frac12} \left( \int_{\sqrt{\eta}}^{\sqrt{z_1(s)}}dz \right)^{{\frac{1}{p'}-2\alpha}}  \\
&\times \left( \int_{\sqrt{ \eta}}^{\sqrt{z_1(0)}} \log^{\frac{1}{2\alpha}}\left(\frac{\sqrt{\eta}}{z-\sqrt{\eta}}\right)dz \right)^{2\alpha} \\
&\times \left(\int_{\sqrt{\eta}}^{\sqrt{z_1(s)}} \norm{q}_{L^p((0,z+\sqrt{s}])}^p \abs{q(z+\sqrt{s})}^p dz \right)^{\frac1p}ds \\
& \le \frac{\ti{C_3}}{(1+\frac{1}{p'})\alpha}(\xi-\eta)^{l+\frac{1}{2p'}-\alpha} (\sqrt{\xi}+\sqrt{\eta})^{1+\frac{1}{p'}+2\alpha} \norm{q}_{L^p( (0,\sqrt{\xi}+\sqrt{\eta}])}^2.
\end{align*}
In the $l=-\frac12$-case, we also only remark, that in the end one gets a factor $(\sqrt{\xi}+\sqrt{\eta})^{1+\frac{2}{p'}+2\alpha}$ instead of $(\sqrt{\xi}+\sqrt{\eta})^{1+\frac{1}{p'}+2\alpha}$
%, with similar changes as in the previous Lemma \ref{lem:u_0est}. 
From now on we restrict ourselves to provide details only for the case $l>-\frac12$. The tiny modifications in the $l=-\frac12$-case will always be similar to Lemma \ref{lem:u_0est}. We continue with $J_4$, and here, again for brevity, only consider the case $(-l-1)p'+1<0$. We use $ z_2(s)-s=\frac{(\xi-s)(\eta-s)}{2\xi-\eta-s}$ and H\"older's inequality we infer:
\begin{align*}
\abs{J_4} &\le \ti{C_4} (\xi-\eta)^{1+2l} \int_{0}^{\eta} (\sqrt{\xi}+s)^{2\alpha} s^{-\frac12} (\xi-s)^{-l-1} (\xi-s)^{l+1-\alpha} (\eta-s)^{l+1-\alpha} \\
&\times (2\xi-\eta-s)^{-l-1+\alpha} \left(\int_{0}^{z_2(s)} (\eta-z)^{(-l-1)p'}dz \right)^{\frac{1}{p'}} \\
&\times \left( \int_{0}^{z_2(s)} z^{p(\frac{1}{2p'}-\frac12)} \abs{q(\sqrt{z}+\sqrt{s})}^p \norm{q}_{L^p ((0,\sqrt{z}+\sqrt{s}])}^p dz \right)^{\frac1p} ds.
\end{align*}
We further estimate this expression by evaluating the inner integrals. After that, we use $ \eta-z_2(s)=\frac{(\xi-\eta)(\eta-s)}{2\xi-\eta-s}$ and we group the remaining terms in an appropriate way:
\begin{align*}
&\abs{J_4} \le \frac{\ti{C_4}}{\abs{(-l-1)p'+1}^{\frac{1}{p'}}} (\sqrt{\xi}+\sqrt{\eta})^{2\alpha} (\xi-\eta)^{1+2l} \int_{0}^{\eta} s^{-\frac12} (\xi-s)^{-\alpha} (\eta-s)^{l+1-\alpha} \\
&\times (2\xi-\eta-s)^{-l-1+\alpha} (\eta-s)^{-l-1+\frac{1}{p'}} (\xi-\eta)^{-l-1+\frac{1}{p'}}\\
&\times (2\xi-\eta-s)^{l+1-\frac{1}{p'}} \norm{q}_{L^p( (0,\sqrt{\xi}+\sqrt{\eta}])}^2 ds \\
&\le \frac{\ti{C_4}}{\abs{(-l-1)p'+1}^{\frac{1}{p'}}}  (\xi-\eta)^{l+\frac{1}{2p'}-\alpha}(\sqrt{\xi}+\sqrt{\eta})^{2\alpha} \norm{q}_{L^p( (0,\sqrt{\xi}+\sqrt{\eta}])}^2 \\
&\times \int_{0}^{\eta} (\sqrt{\xi}+\sqrt{s})^{\frac{1}{p'}} s^{-\frac12} \left( \frac{\eta-s}{(\sqrt{\xi}+\sqrt{s})^2} \right)^{\frac{1}{2p'}} \left( \frac{\eta-s}{2\xi-\eta-s} \right)^{\frac{1}{2p'}-\alpha} \left( \frac{\xi-\eta}{2\xi-\eta-s}\right)^{\frac{1}{2p'}} ds.
\end{align*}
The last expression immediately leads to:
\[
\abs{J_4} \le \frac{\ti{C_3}}{(1+\frac{1}{p'})\alpha \abs{(-l-1)p'+1}^{\frac{1}{p'}}}(\xi-\eta)^{l+\frac{1}{2p'}-\alpha} (\sqrt{\xi}+\sqrt{\eta})^{1+\frac{1}{p'}+2\alpha} \norm{q}_{L^p( (0,\sqrt{\xi}+\sqrt{\eta}])}^2.
\]
%&\le \ti{C_4} (\xi-\eta)^{l} \int_{0}^{\eta} s^{-\frac12}  (\xi-s)^{\frac{1}{2p'}-\alpha} (\eta-s)^{\frac{1}{2p'}-\alpha} \\
%&\times (2\xi-\eta-s)^{-\frac{1}{2p'}+\alpha} \int_{\sqrt{s}}^{\sqrt{\xi}+\sqrt{\eta}}  \abs{q(z)}\norm{q}_{L^p ((0,z])}dzds \\
%&\le \ti{C_4} (\xi-\eta)^l  \int_{0}^{\sqrt{\eta}} (\xi-s^2)^{\frac{1}{2p'}-\alpha}  \int_{s}^{\sqrt{\xi}+\sqrt{\eta}} \norm{q}_{L^p((0,z])}\abs{q(z)} dzds\\
%&\le \ti{C_4}(\xi-\eta)^{l+\frac{1}{2p'}-\alpha} (\sqrt{\xi}+\sqrt{\eta})^{1+\frac{1}{p'}} \norm{q}_{L^p( (0,\sqrt{\xi}+\sqrt{\eta}])}^2,
%where we used $ z_2(s)-s=\frac{(\xi-s)(\eta-s)}{2\xi-\eta-s}$ in the first and $\displaystyle \eta-z_2(s)=\frac{(\xi-\eta)(\eta-s)}{2\xi-\eta-s}$ in the second inequality.
We omit the details for $J_5$ and $J_6$. Concerning $J_6$, we only remark that we follow the same procedure as for $J_3$, at one point though we have to use the estimate $z_2(s) \geq s$ in order to get $s$ as the lower bound of the inner integral.
\end{proof}
The next lemma treats $u_n$:
\begin{lemma} \label{lem:unest}
The following estimates hold:
\begin{align} 
	\abs{u_n(\xi,\eta)} &\le  \frac{\ti{C}^n}{\alpha^{n+1}n!} \norm{q}_{L^p( (0,\sqrt{\xi}+\sqrt{ \eta} ] )}^{n+1} \nn \\
	&\times (\sqrt{\xi}+\sqrt{\eta})^{n(1+\frac{1}{p'})+2\alpha}(\xi-\eta)^{l+\frac{1}{2p'}-\alpha},\quad l>-\frac12 \nn \\
	\abs{u_n(\xi,\eta)} &\le \frac{\ti{C}^n}{\alpha^{n+1}n!} \norm{ q}_{L^p( (0,\sqrt{\xi}+\sqrt{ \eta} ], z^{-\frac{p}{p'}})}^{n+1} (\max(1,L))^{\frac{n+1}{2p'}} \nn \\
	&\times (\sqrt{\xi}+\sqrt{\eta})^{n(1+\frac{1}{p'})+2\alpha}(\xi-\eta)^{-\frac12+\frac{1}{p'}-\alpha}, \quad l=-\frac12. \label{eq:estu_n}
\end{align}
%\[ \label{eq:estu_n}
%\abs{u_n(\xi,\eta)}
%\]
%\[
%\le \begin{cases}
%	\frac{\ti{C}^n}{ \alpha^{n+1} n!} \norm{q}_{L^p( (0,\sqrt{\xi}+\sqrt{ \eta} ] )}^{n+1}(\sqrt{\xi}+\sqrt{\eta})^{n(1+\frac{1}{p'})+2\alpha}(\xi-\eta)^{l+\frac{1}{2p'}-\alpha}, & l>-\frac12  \\
%	\frac{\ti{C}^n}{\alpha^{n+1} n!}  \norm{ q}_{L^p( (0,\sqrt{\xi}+\sqrt{ \eta} ], z^{-\frac{p}{p'}})}^{n+1} (\max(1,L))^{\frac{n+1}{2p'}} \\
%	\times (\sqrt{\xi}+\sqrt{\eta})^{n(1+\frac{1}{p'})}(\xi-\eta)^{-\frac12+\frac{1}{p'}-\alpha}, & l=-\frac12. \nn
%\end{cases}
%\]
The constant is identical to the one obtained in Lemma \ref{lem:u_1est}.
\end{lemma}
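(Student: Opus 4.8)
The plan is to argue by induction on $n$. Lemma \ref{lem:u_0est} gives the case $n=0$ (with the constant there denoted $C$) and Lemma \ref{lem:u_1est} the case $n=1$; from $n=1$ on, the inductive step is essentially a repetition of the proof of Lemma \ref{lem:u_1est} with $u_0$ replaced by $u_n$. So, assuming \eqref{eq:estu_n}, I would substitute this bound into \eqref{def:un}, decompose the domain $0BPA$ into exactly the pieces that produced $J_1,\dots,J_6$ in the proof of Lemma \ref{lem:u_1est}, estimate $\abs{v}$ on each piece by Lemma \ref{lem:vest}, apply H\"older with the same exponents $\tfrac{p'}{1-2\alpha p'},\tfrac1{2\alpha},p$ (or just $p,p'$ on the terms without a logarithm), and bound every logarithmic factor by Lemma \ref{lem:logint}, which again contributes one factor $\tfrac1\alpha$. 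From here on I would only carry out the case $l>-\tfrac12$, the case $l=-\tfrac12$ requiring exactly the same weighted-norm modifications and the same additional factor $(\max(1,L))^{1/(2p')}$ per iteration that already appeared in Lemmas \ref{lem:u_0est} and \ref{lem:u_1est}.

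The factorial comes from two places, and identifying them is the heart of the step. First, the $q$-dependent part of the innermost integral now reads (after the change of variable used in the proof of Lemma \ref{lem:u_1est}) $\norm{q}_{L^p((0,r])}^{(n+1)p}\abs{q(r)}^p$, which integrates exactly:
\[
\int_0^t \norm{q}_{L^p((0,r])}^{(n+1)p}\abs{q(r)}^p\,dr=\frac{1}{n+2}\,\norm{q}_{L^p((0,t])}^{(n+2)p},
\]
so that this integral contributes $\tfrac{1}{(n+2)^{1/p}}\,\norm{q}_{L^p((0,t])}^{n+2}$ and raises the power of the norm by one. Second, the inductive bound carries the factor $(\sqrt z+\sqrt s)^{n(1+1/p')+2\alpha}$, which after being pulled out and integrated against $s^{-1/2}\,ds$ in the outer variable, using
\[
\int_0^\eta(\sqrt\xi+\sqrt s)^{n(1+\frac1{p'})+2\alpha+\frac1{p'}}s^{-1/2}\,ds\le\frac{2}{n(1+\frac1{p'})+2\alpha+\tfrac1{p'}+1}\,(\sqrt\xi+\sqrt\eta)^{(n+1)(1+\frac1{p'})+2\alpha},
\]
simultaneously produces the correct new power of $\sqrt\xi+\sqrt\eta$ and a factor $\le\tfrac{C}{n+1}$. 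Multiplying the gains from the two integrations (and the $\tfrac1\alpha$ from the logarithm), the $u_n$-bound gets multiplied by at most $\tfrac{\ti{C}}{\alpha(n+1)}$; absorbing the harmless factors $(1+\tfrac1{p'})^{-n}\le1$, the $2^{-1/p}$-type constants from the norm integral, and the various $\beta$-type and comparison constants into $\ti{C}$, this is precisely \eqref{eq:estu_n} with $n$ replaced by $n+1$ and the same $\ti{C}$ as in Lemma \ref{lem:u_1est}.

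The step that demands the most care is the uniformity of all these constants in $n$. One has to verify that the H\"older constants, the constant in Lemma \ref{lem:logint} (which depends only on the fixed exponent $\tfrac1{2\alpha}$), the $\beta$-type integrals such as $\int_0^{z_2(s)}(\eta-z)^{(-l-1)p'}\,dz$, and the elementary comparisons $\tfrac{\xi-z}{z-s}\le C_l\tfrac{\xi-\eta}{\eta-s}$, $z_1(s)\le z_1(0)$, $z_2(s)\ge s$ used to reduce everything to $s$-independent limits are all bounded independently of $n$, so that the only genuine $n$-dependence is the explicit $\tfrac1{\alpha^{n+1}n!}$, the power $n+1$ on $\norm{q}$, and the power $n(1+\tfrac1{p'})$ on $\sqrt\xi+\sqrt\eta$. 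It is worth stressing that the norm-power integral by itself would only give a decay $((n+1)!)^{-1/p}$, which is too weak when $p>1$; the full $\tfrac1{n!}$ is obtained only by combining it with the $\tfrac1{n+1}$-type decay extracted from integrating the growing power of $\sqrt z+\sqrt s$.
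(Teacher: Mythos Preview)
Your proposal is correct and follows exactly the paper's approach: induction on $n$, the same decomposition into the pieces $J_1,\dots,J_6$, the same H\"older exponents $\tfrac{p'}{1-2\alpha p'},\tfrac{1}{2\alpha},p$, Lemma~\ref{lem:logint} for the logarithmic factor, and extraction of the $\tfrac{1}{n+1}$ from the outer integral $\int_0^\eta(\sqrt\xi+\sqrt s)^{n(1+1/p')+\cdots}s^{-1/2}\,ds$. One small correction to your closing remark: the $s$-integration \emph{alone} already produces the full factor $\tfrac{1}{n+1}$ at each step and hence the $\tfrac{1}{n!}$; the additional gain $(n+2)^{-1/p}$ from the exact evaluation of $\int\norm{q}^{(n+1)p}\abs{q}^p$ is not needed and the paper simply discards it by bounding $\norm{q}_{L^p((0,\sqrt z+\sqrt s])}\le\norm{q}_{L^p((0,\sqrt\xi+\sqrt\eta])}$ and pulling it outside, so your phrase ``only by combining'' overstates its role.
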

%}
\begin{proof}
We do a similar integral splitting as before, and, as an example, only provide details for the inequality for $J_3^{n}$. We will proceed inductively:
\begin{align*}
\abs{J_3^{n+1}} &\le \frac{\ti{C}^n C_3 (\xi-\eta)^l}{\alpha^{n+1} n!} \int_{0}^{\eta} s^{-\frac12} \int_{\sqrt{\eta}}^{\sqrt{z_1(s)}} \log\left(\frac{\sqrt{\eta}}{z-\sqrt{\eta}}\right) \\
&\times \abs{q(z+\sqrt{s})} \norm{q}_{L^p ((0,z+\sqrt{s}])}^{n+1}(\sqrt{z}+\sqrt{s})^{n(1+\frac{1}{p'})}(z-s)^{\frac{1}{2p'}} dzds \\
&\le \frac{\ti{C}^n C_3 (\xi-\eta)^l}{\alpha^{n+1} n!} \int_{0}^{\eta} (\sqrt{\xi}+\sqrt{s})^{n(1+\frac{1}{p'})+\frac{1}{p'}}s^{-\frac12} \left( \int_{\sqrt{\eta}}^{\sqrt{z_1(s)}}dz \right)^{{\frac{1}{p'}-2\alpha}}  \\
&\times \left( \int_{\sqrt{ \eta}}^{\sqrt{z_1(0)}} \log^{\frac{1}{2\alpha}}\left(\frac{\sqrt{\eta}}{z-\sqrt{\eta}}\right)dz \right)^{2\alpha} \\
&\times \left(\int_{\sqrt{\eta}}^{\sqrt{z_1(s)}} \norm{q}_{L^p((0,z+\sqrt{s}])}^{p(n+1)} \abs{q(z+\sqrt{s})}^p dz \right)^{\frac1p}ds. \\
\end{align*}
With an analogous reasoning as for $J_3$, we obtain
\[
\abs{J_3^{n+1}} \le \frac{\ti{C}^{n+1} (\xi-\eta)^{l+\frac{1}{2p'}-\alpha}}{\alpha^{n+2} (n+1)!} (\sqrt{\xi}+\sqrt{\eta})^{(n+1)(\frac{1}{p'}+1)+2\alpha} \norm{q}_{L^p( (0,\sqrt{\xi}+\sqrt{ \eta} ] )}^{n+2}.
\]
\end{proof}
We are now in the position to finish the proof of Theorem \ref{thm:estu}:
\begin{proof}[Proof of Theorem~\ref{thm:estu}]
Everything now follows from the Lemmas \ref{lem:vest}--\ref{lem:unest}, since $\sum_{n=0}^{\infty} \abs{u_n}$ converges uniformly on compact sets.
\end{proof}
%\begin{remark}
%Of course it would be nicer to get the exponential series as a uniform bound, but I unfortunately also wasn't able to establish this $:($
%\end{remark}
We continue now with some remarks, which aim at relating previously obtained results to this work:
\begin{remark} \label{rem:rem1}
It has already been mentioned in \cite[Appendix]{kty}, that the estimates for $u$ in \cite{volk} contain an error. Indeed, if they were true, we would have the inequality $\abs{B(x,x)}\le C x^{2-2\rho}$ for any $0\le \rho <1$, which is impossible for $\rho<\frac12$ due to $\frac{\partial B(x,x)}{\partial x}=\frac{q(x)}{2}$(\eqref{eq:B2bc}), because not even a constant potential $q(x)=1$ would satisfy the condition. That's the main reason, why we and the authors of \cite{kty} have been very careful in the proofs and also provided many details regarding the technical estimates. Moreover, our computations also allow to generalize to potentials lying in some $L^p$-space.
\end{remark}
\begin{remark} \label{rem:rem2}
In the $-\frac12$ case, however, it seems, that not even continuous(or bounded) potentials suffice, and we imposed some extra decay condition near $0$, mentioned in Theorem \ref{mainthm1}. 
\end{remark}
\begin{remark} \label{rem:rem3}
%In \cite{ktt} and \cite{HKT2} it was conjectured 
It would of course be very convenient, if Theorem \ref{mainthm1} continues to hold for any $q\in L^1_{\loc} ([0,\infty))$. However, it seems that to treat the logarithmic singularities e.g. in $I_3$ and $I_6$, one has to work with H\"older's inequality, which of course isn't available for locally integrable potentials.
\end{remark}
If we are now able to prove that, in addition to Theorem \ref{thm:estu}, $u$ is also a $C^2$ function, then we can indeed conclude that it satisfies \eqref{eq:u}--\eqref{eq:ubc}. This will be discussed next:
\begin{lemma} \label{lem:Bder}
Let $q\in C^{1}([0, L])$. Then $B(x,\cdot) \in C^2([0,x])$.
\end{lemma}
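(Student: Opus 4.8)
The goal is to upgrade the integral equation \eqref{eq:intequ} from producing merely a continuous solution $u$ (Theorem \ref{thm:estu}) to producing a $C^2$ solution when $q\in C^1([0,L])$, and then to translate this back through the substitution \eqref{eq:Btr} into $C^2$-regularity of $B(x,\cdot)$. The plan is to work directly with \eqref{eq:intequ}, which after reintroducing the kernels $v_1,v_2$ reads $u(\xi,\eta)=\frac14\int_0^\xi v q(\sqrt z)z^{l-1/2}\,dz+\frac14\iint_{0BPA}(zs)^{-1/2}vq(\sqrt z+\sqrt s)u\,dz\,ds$. First I would bootstrap: Theorem \ref{thm:estu} already gives a continuous $u$ with the precise boundary decay in \eqref{eq:ubc}, and since $q$ is now continuous, the integrand in the double integral is continuous; differentiating the integral equation once in $\xi$ and once in $\eta$ (using that the domain $0BPA$ has boundary edges $z=\xi$, $s=0$, and the diagonal $z=s$, plus the edge $z-s=\eta$, all of which contribute boundary terms on differentiation) shows $u\in C^1$. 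Here one has to be slightly careful because $v_1,v_2$ depend on the parameters $(\eta,\xi)$ as well, so differentiating in $\eta$ or $\xi$ also hits the Riemann function; the estimates in Lemma \ref{lem:vest} together with their derivative analogues (the kind of computation already done for $\frac{\partial v_1}{\partial z}$ in the proof of Lemma \ref{lem:delta2}) keep these terms integrable near the singular edges.

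Next, with $q\in C^1$, I would differentiate a second time. Differentiating the single integral $\frac14\int_0^\xi v\,q(\sqrt z)z^{l-1/2}\,dz$ twice is where the $C^1$ hypothesis on $q$ is genuinely used: the first $\xi$-derivative produces the boundary term $\frac14 v(\xi,\cdot)q(\sqrt\xi)\xi^{l-1/2}$ plus an integral, and to differentiate this boundary term again one needs $q'$ to exist. Similarly, in the double integral the second derivative produces terms involving $q(\sqrt z+\sqrt s)$ evaluated on the boundary edges and, after a further differentiation, its derivative. The upshot is a system of integral equations for $u$, $u_\xi$, $u_\eta$, $u_{\xi\eta}$ (and the pure second derivatives $u_{\xi\xi}$, $u_{\eta\eta}$, which by \eqref{eq:u} are expressible through the mixed one and lower-order terms), all with weakly singular kernels of the same type as in \eqref{eq:intequ}; the successive-approximation argument of Theorem \ref{thm:estu} applies verbatim to this enlarged system and yields continuity of all these derivatives, hence $u\in C^2$ on the relevant region. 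Equivalently, one checks that $u$ solves \eqref{eq:u}--\eqref{eq:ubc} classically.

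Finally I would transfer regularity back to $B$. Since $B(x,y)=(z-s)^{-l}u(z,s)$ with $z=\frac{(x+y)^2}{4}$, $s=\frac{(x-y)^2}{4}$, and the map $(x,y)\mapsto(z,s)$ is a smooth diffeomorphism on $\{0<y<x\le L\}$, the weight $(z-s)^{-l}=(xy)^{-l}$ is smooth and nonvanishing there, so $B(x,\cdot)\in C^2((0,x])$ follows immediately from $u\in C^2$; near $y=0$ the boundary behavior \eqref{eq:ubc} together with the established decay of $u$ and its derivatives (each differentiation in $s$ costs one power, consistent with \eqref{eq:B2bcnew}) gives $C^2$ up to the endpoint, so $B(x,\cdot)\in C^2([0,x])$ as claimed.

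The main obstacle is the second step: showing that differentiating the integral equation twice does not destroy the convergence of the successive-approximation scheme. The danger is that each derivative lowers the power of the singular factors $(z-s)^{-2l}$, $(\eta-z)^{-l-1}$, etc. appearing in $v_1,v_2$ (cf.\ the bound $\le C_l\delta^{-l-1}$ for $\partial v_1/\partial z$), which could push a locally integrable singularity over the edge into non-integrability, especially for $l$ close to $-\frac12$. I expect this to be controllable precisely because the boundary decay \eqref{eq:ubc} of $u$ (and the analogous decay of its first derivatives) improves as one approaches the diagonal and the axis, exactly compensating the loss from the Riemann-function derivatives — this is the same delicate balance that already made Lemmas \ref{lem:delta1} and \ref{lem:delta2} work, so the estimates there serve as the template.
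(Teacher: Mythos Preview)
Your plan is workable in principle but takes a much harder road than the paper. The paper does \emph{not} bootstrap through the integral equation \eqref{eq:intequ} and never touches the Riemann functions $v_1,v_2$ for this lemma. Instead it passes to the \emph{linear} characteristic coordinates $\tilde z=\tfrac{x+y}{2}$, $\tilde s=\tfrac{x-y}{2}$ (not the squared ones from \eqref{eq:Btr}), so that \eqref{eq:B2} becomes $\tilde u_{\tilde z\tilde s}+\tfrac{4l(l+1)\tilde z\tilde s}{(\tilde z^2-\tilde s^2)^2}\tilde u=-q(\tilde z+\tilde s)\tilde u$ for $\tilde u(\tilde z,\tilde s)=B(x,y)$. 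Integrating this naively over the characteristic triangle and transforming back to $(x,y)$ yields a plain Volterra-type integral equation for $B$ with the explicit kernel $q(\tilde x)+l(l+1)\bigl(\tfrac{1}{\tilde x^2}-\tfrac{1}{\tilde y^2}\bigr)$. From this equation one reads off $C^2$-regularity of $B$ directly once $q\in C^1$: differentiating the right-hand side twice only produces $q'$ and smooth boundary terms, with no singular parameter-derivatives to control. The Riemann functions were needed only for the sharp weighted \emph{estimates} of Theorem~\ref{thm:estu}; for the qualitative $C^2$ statement under the much stronger hypothesis $q\in C^1$ they are simply bypassed.

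Your route, by contrast, forces you to differentiate $v_1,v_2$ in the parameters $(\xi,\eta)$, and near $z=\eta$ these parameter-derivatives behave like $(z-\eta)^{-1}$, so the integrability is genuinely borderline rather than a routine reuse of Lemmas~\ref{lem:delta1}--\ref{lem:delta2}. One concrete slip: you say the pure second derivatives $u_{\xi\xi},u_{\eta\eta}$ are ``by \eqref{eq:u} expressible through the mixed one and lower-order terms'', but \eqref{eq:u} gives only $u_{\xi\eta}$ in terms of $u_\xi,u_\eta,u$; the pure second derivatives must come from differentiating \eqref{eq:intequ} twice in the same variable, which is precisely where the worst Riemann-function singularities enter and cannot be sidestepped via the PDE.
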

\begin{proof}
This proof closely follows the arguments from \cite{soh}(cf. end of page 6). Let the corresponding kernel $B(x,y)$ be given by \eqref{eq:Btr}. We start by establishing an integral equation for $B$. We thus introduce the new coordinates $\ti{z}:=\sqrt{z}=\frac{x+y}{2}, \quad \ti{s}:=\sqrt{s}=\frac{x-y}{2},$ and the function $\ti{u}(\ti{z},\ti{s}):=B(x,y)=B(\ti{z}+\ti{s},\ti{z}-\ti{s})$, so that \eqref{eq:B2} transforms to 
\[
\frac{\partial^2 \ti{u}}{\partial \ti{z} \partial \ti{s}}+\frac{ 4l(l+1)\ti{z}\ti{s}}{(\ti{z}^2-\ti{s}^2)^2}\ti{u}=-q(\ti{z}+\ti{s})\ti{u}.
\] 
Now we integrate with respect to $\ti{z}$ and $\ti{s}$ and transform back to $x$ and $y$ coordinates($\ti{x}=\ti{z}+\ti{s}, \quad \ti{y}=\ti{z}-\ti{s}$) and obtain the following equation for $B$:
%(the first term origins from \eqref{eq:B2bc}, the upper bounds of the inner integrals come from the inequalities $\frac{\ti{x}+\ti{y}}{2} \le \frac{x+y}{2}$ and $\ti{y} \le \ti{x}$): 
\begin{align*}
B(x,y)= &\int_{0}^{\frac{x+y}{2}} q(\ti{x})d \ti{x} +\frac12 \left( \int_{0}^{\frac{x+y}{2}} d \ti{x} \int_0^{\ti{x}} +\int_{\frac{x+y}{2}}^{x} d \ti{x} \int_{0}^{x+y-\ti{x}} \right) \\
& \times \left[ q(\ti{x})+l(l+1)\left(\frac{1}{\ti{x}^2} - \frac{1}{\ti{y}^2} \right) \right] B(\ti{x}, \ti{y}) d\ti{y}, \quad 0<y \le x.
\end{align*}
This immediately shows that $B$ obtains second derivatives, if $q$ is differentiable. 
\end{proof}
So far we have shown that for a smooth potential $q$ the transformation operators exist. Now we suppose the assumptions on $q$ from Theorem \ref{mainthm1} and proceed as follows: Approximate the function $q$ by a sequence of smooth functions $q_n$, such that for any $x\in(0,L]$, $q_n$ converges to $q$ in the $L^p((0,x])$-norm(or $L^p((0,x], z^{-\frac{1}{p'}})$-norm in the $-\frac12$-case). Let $B_n(x,y)$, $B(x,y)$ be the kernels obtained from the potentials $q_n$, $q$ respectively via Theorem \ref{mainthm1}. Then from \eqref{eq:GLest} we can conclude that $B_n$ converges to $B$ uniformly on $[0,L]^2$. This proves Theorem \ref{mainthm1}. 
%%%%%%%%%%%%%%%%%%%%%%%%%%%%%%%%%%%%%%%%%%%%%%%%%%%%%%%%%%%%%%%%%%%%%%%%%
\section{Transformation Operators near $\infty$}
%%%%%%%%%%%%%%%%%%%%%%%%%%%%%%%%%%%%%%%%%%%%%%%%%%%%%%%%%%%%%%%%%%%%%%%%%
Completely analogous computations as in the beginning of the previous section lead to the following set of equations for the transformation operator $K$:
\begin{align} \label{eq:K}
&\left( \frac{\partial^2 }{\partial x^2} - \frac{\partial^2 }{\partial y^2}  +  \frac{l(l+1)}{y^2} - \frac{l(l+1)}{x^2}-q(x) \right)K(x,y)=0, \quad 0<x<y \\
&\quad \frac{\partial K(x,x)}{\partial x}=-\frac{q(x)}{2}, \quad \lim_{y \to \infty} K(x,y) =0=\lim_{y \to \infty} \frac{\partial K(x,y)}{\partial y}. \label{eq:Kbc}
\end{align}
The next step is to put the problem into integral form using the same change of variables as in Lemma \ref{lem:Bder}, i.e. $\xi:=\frac{x+y}{2}, \eta:=\frac{y-x}{2}, \quad w(\xi,\eta):=K(x,y)=K(\xi-\eta,\xi+\eta)$, so that \eqref{eq:K} transforms to 

\begin{align}
\frac{\partial^2 w}{\partial \xi \partial \eta}&+\frac{ 4l(l+1)\xi \eta}{(\xi^2-\eta^2)^2}w=-q(\xi-\eta)w \label{eq:w} \\
w(\xi,0)=\frac12 \int_{x}^{\infty} &q(z) dz, \quad \lim_{\xi \to \infty} w(\xi,\eta)=0, \quad \eta>0. \label{eq:Kbc}
\end{align}
Again, as in the previous section, for the time being we assume $q$ to be differentiable. We furthermore introduce the Riemann function $v_3$ as a solution to the problem

\begin{align} \label{eq:v3}
\frac{\partial^2 v_3}{\partial z \partial s}+\frac{ 4l(l+1)zs}{(z^2-s^2)^2}v_3&=0, \quad 0<s<\eta<\xi<z<\infty \\
v_3(z,s;\xi,\eta) \Big|_{z=\xi}&=1 \quad s\in[0,\eta], \nn \\
v_3(z,s;\xi,\eta) \Big|_{s=\eta}&=1 \quad z\in[\xi,\infty). \nn
\end{align}
Using the transformation $\ti{z}=z^2$, $\ti{s}=s^2$ and defining $\ti{v_3}:=(\ti{z}-\ti{s})^l v_3$, we see that $\ti{v_3}$ satisfies the equation $L\ti{v_3}=0$( $L$ is again defined by \eqref{def:L}). Similar considerations as for $v_1$ lead to the following explicit formula for $v_3$:
\be \label{eq:formulav_3}
v_3(z,s; \eta, \xi)= \left( \frac{z^2-\eta^2}{z^2-s^2} \cdot \frac{\xi^2-s^2}{\xi^2-\eta^2} \right)^l \hyp21{-l,-l}{1}{\frac{z^2-\xi^2}{z^2-\eta^2}\cdot\frac{\eta^2-s^2}{\xi^2-s^2}}.
\ee
Let us proceed with a more detailed analysis of $v_3$:
\begin{lemma} \label{lem:v3est}
The Riemann function $v_3$ satisfies the following estimate:
\begin{align} \label{eq:estv_3}
\abs{v_3(z,s;\xi,\eta)} &\le C_l \left( \frac{\xi^2}{\xi^2-\eta^2} \right)^l , \quad l>-\frac12 \\
\abs{v_3(z,s;\xi,\eta)} &\le \frac{C_{-\frac12}}{\beta}  \left( \frac{\xi^2}{\xi^2-\eta^2} \right)^{-\frac12+\beta} ,  \quad l=-\frac12
\end{align}
where $0<s<\eta<\xi<z<\infty$ and $0<\beta\le \frac12$.
\end{lemma}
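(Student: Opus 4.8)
The plan is to argue straight from the explicit representation \eqref{eq:formulav_3}, so that the whole lemma reduces to elementary estimates for the two rational functions appearing in it plus a uniform bound for $\hyp21{-l,-l}{1}{\sigma}$ on $[0,1)$. Write $v_3=R^{l}\,\hyp21{-l,-l}{1}{\sigma_3}$ with
\[
R:=\frac{(z^2-\eta^2)(\xi^2-s^2)}{(z^2-s^2)(\xi^2-\eta^2)},\qquad \sigma_3:=\frac{(z^2-\xi^2)(\eta^2-s^2)}{(z^2-\eta^2)(\xi^2-s^2)}.
\]
On the region $0<s<\eta<\xi<z$ I would first record three algebraic facts. (a) $\sigma_3$ is the product of the two numbers $\frac{z^2-\xi^2}{z^2-\eta^2}$ and $\frac{\eta^2-s^2}{\xi^2-s^2}$, each in $[0,1)$, so $\sigma_3\in[0,1)$ and the hypergeometric argument never reaches the branch point. (b) Clearing denominators, the numerator of $1-\sigma_3$ collapses to $(z^2-s^2)(\xi^2-\eta^2)$, giving the key identity $1-\sigma_3=R^{-1}$. (c) $1\le R\le\frac{\xi^2}{\xi^2-\eta^2}$: the lower bound because $R-1=\frac{(z^2-\xi^2)(\eta^2-s^2)}{(z^2-s^2)(\xi^2-\eta^2)}\ge0$, the upper bound because $z^2-\eta^2\le z^2-s^2$ and $\xi^2-s^2\le\xi^2$.

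For the hypergeometric factor, the case $l>-\tfrac12$ is the easy one. Here $a=b=-l$, $c=1$ satisfy $c-a-b=1+2l>0$, so Gauss's summation theorem gives $\hyp21{-l,-l}{1}{1}=\Gamma(1+2l)/\Gamma(1+l)^2<\infty$; since the Maclaurin coefficients $\bigl((-l)_k/k!\bigr)^2$ are nonnegative, $\sigma\mapsto\hyp21{-l,-l}{1}{\sigma}$ is nondecreasing on $[0,1]$ and hence bounded there by $C_l:=\Gamma(1+2l)/\Gamma(1+l)^2$ (for $l\in\N_0$ it is just a polynomial). Thus $|v_3|=R^{l}\,\hyp21{-l,-l}{1}{\sigma_3}\le C_l R^{l}$, and inserting the two-sided bound (c) yields the first estimate in \eqref{eq:estv_3}. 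The endpoint $l=-\tfrac12$ is where the real work is: now $1+2l=0$ and the series diverges at $\sigma_3=1$. Instead I would use the connection formula for $\hyp21{\frac12,\frac12}{1}{\sigma_3}$ at the regular singular point $1$ — the same type of expansion, with the $\psi$-function estimates, already used in the Appendix for Lemma~\ref{lem:vest} — to get $\hyp21{\frac12,\frac12}{1}{\sigma_3}\le C\bigl(1+\log\tfrac{1}{1-\sigma_3}\bigr)$ for $\sigma_3\in[0,1)$. By (b), $\log\tfrac{1}{1-\sigma_3}=\log R$, and the elementary inequality $\log t\le t^{\beta}/(e\beta)$ for $t\ge1$, $\beta>0$, with $t=R$ turns this into $\hyp21{\frac12,\frac12}{1}{\sigma_3}\le\frac{C}{\beta}R^{\beta}$; hence $|v_3|=R^{-1/2}\hyp21{\frac12,\frac12}{1}{\sigma_3}\le\frac{C}{\beta}R^{-1/2+\beta}$, and inserting (c) once more gives the second estimate in \eqref{eq:estv_3}.

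The main obstacle is exactly this endpoint $l=-\tfrac12$: near $\sigma_3=1$ the hypergeometric function is only logarithmically bounded, and the auxiliary parameter $\beta\in(0,\tfrac12]$ in the statement — together with the $\tfrac1\beta$ loss — is there precisely to trade that logarithm for the harmless power $(1-\sigma_3)^{-\beta}=R^{\beta}$. A secondary but genuinely needed point is the careful proof of the two-sided bound $1\le R\le\xi^2/(\xi^2-\eta^2)$ and the bookkeeping of the signs of the exponents when passing from $R$ to $\xi^2/(\xi^2-\eta^2)$; once the identity $1-\sigma_3=R^{-1}$ is in hand, the rest is routine. (That \eqref{eq:formulav_3} indeed solves the Riemann problem \eqref{eq:v3} — after $\ti z=z^2$, $\ti s=s^2$ it becomes $L\,\ti{v_3}=0$, as in the construction of $v_1$ — is taken as given.)
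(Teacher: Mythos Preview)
Your argument is essentially the paper's: both identify the prefactor as $(1-\sigma_3)^{-l}$ via the identity $1-\sigma_3=R^{-1}$, bound the hypergeometric factor by Gauss's summation \eqref{eq:Fnear11} for $l>-\tfrac12$ and by the logarithmic asymptotics \eqref{eq:Fnear13} for $l=-\tfrac12$, and then replace $R$ by $\xi^2/(\xi^2-\eta^2)$. Your write-up is in fact more explicit than the paper's, which simply says ``immediately leads to \eqref{eq:estv_3}'' at the last step; your use of $\log t\le t^{\beta}/(e\beta)$ is exactly what the paper means by ``$(1-t)^{\beta}\log\frac{1}{1-t}$ is a constant multiple of $\frac{1}{\beta}$''.

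There is, however, a genuine gap in the last step --- one shared verbatim by the paper. Your two-sided bound $1\le R\le \xi^2/(\xi^2-\eta^2)$ only gives $R^{l}\le \big(\xi^2/(\xi^2-\eta^2)\big)^{l}$ when the exponent is \emph{nonnegative}; for $-\tfrac12<l<0$ (and for $l=-\tfrac12$ with $\beta<\tfrac12$) the monotonicity of $t\mapsto t^{l}$ reverses, and neither side of (c) yields the desired inequality. This is not merely a bookkeeping issue: as $z\to\xi^{+}$ one has $\sigma_3\to0$, $R\to1$ and hence $v_3\to1$, while the right-hand side $C_l\big(\xi^2/(\xi^2-\eta^2)\big)^{l}$ can be made arbitrarily small by taking $\eta\uparrow\xi$, so no constant $C_l$ depending only on $l$ can work uniformly in that range. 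What your argument \emph{does} establish cleanly is $|v_3|\le C_l\,R^{l}$ (resp.\ $\le \frac{C}{\beta}R^{-1/2+\beta}$), which is the bound actually used in the iteration of Lemma~\ref{lem:estwn}; the further replacement by $\big(\xi^2/(\xi^2-\eta^2)\big)^{l}$ is only valid when $l\ge0$ (resp.\ $\beta=\tfrac12$).
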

\begin{proof}
Set $t:=\frac{z^2-\xi^2}{z^2-\eta^2}\cdot\frac{\eta^2-s^2}{\xi^2-s^2}$. Clearly $0<t<1$. A short calculation also shows $1-t=\frac{z^2-s^2}{z^2-\eta^2} \cdot \frac{\xi^2-\eta^2}{\xi^2-s^2}$, thus we also have $0<1-t<1$. It remains to look at asymptotics for $(1-t)^{-l} \hyp21{-l,-l}{1}{t}$, where $0 \le t \le 1$. In the case $l \ne -\frac12$, by employing \eqref{eq:Fnear11} we obtain
\[
\abs{(1-t)^{-l} \hyp21{-l,-l}{1}{t}} \le C_l (1-t)^{-l}
%, & l > 0 \\
%C_l, & -\frac12<l\le 0,
%\end{cases}
\]
which immediately leads to \eqref{eq:estv_3}. Moreover, in the case $l=-\frac12$, using \eqref{eq:Fnear13}, we get
\[
(1-t)^{\frac12} \hyp21{\frac12,\frac12}{1}{t}=\OO\left((1-t)^{\frac12} \log\left(\frac{1}{1-t}\right) \right), \quad t \to 0,
\]
and thus again \eqref{eq:estv_3} follows, since on the intervall $[0,1]$ the expression $(1-t)^{\beta} \log\left(\frac{1}{1-t}\right)$ is a constant multiple of $\frac{1}{\beta}$. 
\end{proof}
Next, if we apply Riemann's method to \eqref{eq:w}, we end up with the subsequent integral equation for $w$:
\begin{align} \label{eq:intequinfty}
w(\xi,\eta)=\frac12 \int_{\xi}^{\infty} v_3(z,0;\xi,\eta)q(z)dz+\int_{\xi}^{\infty} \int_{0}^{\eta} q(z-s)v_3(z,s;\xi,\eta) w(z,s) ds dz. 
\end{align}
For simplicity let us start with the case $l>-\frac12$.
Instead of $w$, we will consider an integral equation for the function $\ti{w}:= \left( \frac{\xi^2}{\xi^2-\eta^2} \right)^{-l} w(\xi,\eta)$. Thus in the sequel we are concerned with the following expression:
\begin{align*}
\ti{w}(\xi,\eta)&=\frac12 \left( \frac{\xi^2}{\xi^2-\eta^2} \right)^{-l} \int_{\xi}^{\infty} v_3(z,0;\xi,\eta)q(z)dz \\
&+\int_{\xi}^{\infty} \int_{0}^{\eta} q(z-s)v_3(z,s;\xi,\eta) \ti{w}(z,s) ds dz. 
\end{align*}
Again we intend to apply successive approximation and set $\ti{w}=\sum_{n=0}^{\infty} \ti{w}_n$, where the $\ti{w}_n$'s are defined recursively as follows:
\begin{align} 
\ti{w}_0(\xi,\eta)&:= \left( \frac{\xi^2}{\xi^2-\eta^2} \right)^{-l} \frac12 \int_{\xi}^{\infty} v_3(z,0;\xi,\eta)q(z)dz \nn \\
\ti{w}_{n+1}(\xi,\eta)&:= \left( \frac{\xi^2}{\xi^2-\eta^2} \right)^{-l}\int_{\xi}^{\infty} \int_{0}^{\eta} q(z-s)v_3(z,s;\xi,\eta) \ti{w}_n(z,s) ds dz. \label{eq:wn}
\end{align}
%The iterates $w_n$ in the case $l \le 0$ are obviously defined in the same way, just without the factor $\left( \frac{\xi^2}{\xi^2-\eta^2} \right)^{-l}$.
In the case $l=-\frac12$, we will consider $\ti{w}:= \left( \frac{\xi^2}{\xi^2-\eta^2} \right)^{\frac12-\beta} w(\xi,\eta)$ instead, the definitions in \eqref{eq:wn} will change in an obvious way. 
We will end up with the following theorem:
\begin{theorem} \label{thm:estw}
Under the conditions on $q$ stated in Theorem \ref{mainthm2}, there is a unique continuous function  $w(\xi,\eta)$ that solves \eqref{eq:intequinfty} and satisfies  
%\be\label{eq:west}
%\abs{w(\xi,\eta)} \le \frac{C_l}{2} \left( \frac{\xi^2}{\xi^2-\eta^2} \right)^l \ti{\sigma}_0(\xi)\E^{C_l[\ti{\sigma}_1(\xi-\eta)-\ti{\sigma}_1(\xi)]}%& l>0, \\
\begin{align}
	\abs{w(\xi,\eta)} &\le\frac{C_l}{2} \left( \frac{\xi^2}{\xi^2-\eta^2} \right)^l \ti{\sigma}_0(\xi)\exp \left(C_l[\ti{\sigma}_1(\xi-\eta)-\ti{\sigma}_1(\xi)]\right), \quad l>-\frac12 \nn \\
	\abs{w(\xi,\eta)} &\le \frac{C_{-\frac12}}{2\beta}  \left( \frac{\xi^2}{\xi^2-\eta^2} \right)^{-\frac12+\beta}
 \ti{\sigma}_0(\xi)\exp \left(\frac{C_{-\frac12}}{\beta}[\ti{\sigma}_1(\xi-\eta)-\ti{\sigma}_1(\xi)]\right), \quad l=-\frac12, \label{eq:west}
\end{align}
where $0<\beta\le \frac12$.
\end{theorem}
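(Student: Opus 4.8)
The plan is to solve the integral equation \eqref{eq:intequinfty} by successive approximation through the iterates \eqref{eq:wn}, in complete analogy with the proof of Theorem~\ref{thm:estu} in Section~\ref{Isection}, with Lemma~\ref{lem:v3est} now playing the role of Lemma~\ref{lem:vest}. The goal is to prove by induction an estimate of the form
\[
|\tilde w_n(\xi,\eta)|\ \le\ \frac{C_l}{2}\,\tilde\sigma_0(\xi)\,\frac{\bigl(C_l\bigl[\tilde\sigma_1(\xi-\eta)-\tilde\sigma_1(\xi)\bigr]\bigr)^{n}}{n!},\qquad l>-\tfrac12,
\]
together with the same estimate with $C_l$ replaced by $C_{-1/2}/\beta$ when $l=-\tfrac12$, then to sum the series $\tilde w=\sum_{n\ge0}\tilde w_n$, and finally to multiply by the prefactor $\bigl(\xi^2/(\xi^2-\eta^2)\bigr)^{l}$ (resp.\ $\bigl(\xi^2/(\xi^2-\eta^2)\bigr)^{-1/2+\beta}$) to pass from $\tilde w$ to $w$ and obtain \eqref{eq:west}.

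For the base case I would insert the bound for $v_3$ from Lemma~\ref{lem:v3est} into the definition of $\tilde w_0$ in \eqref{eq:wn}: the prefactor in front of the integral cancels exactly against the factor $\bigl(\xi^2/(\xi^2-\eta^2)\bigr)^{l}$ coming from $|v_3|$, so that $|\tilde w_0(\xi,\eta)|\le\tfrac{C_l}{2}\int_\xi^\infty|q(z)|\,dz=\tfrac{C_l}{2}\tilde\sigma_0(\xi)$. For the inductive step I would substitute the bound for $\tilde w_n$ and Lemma~\ref{lem:v3est} into the recursion in \eqref{eq:wn}; the powers of $\xi^2/(\xi^2-\eta^2)$ cancel once more, $\tilde\sigma_0$ is monotone (so $\tilde\sigma_0(z)\le\tilde\sigma_0(\xi)$ for $z\ge\xi$), and the problem reduces to a Marchenko-type iterated integral inequality over the region $\xi\le z<\infty$, $0\le s\le\eta$. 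To handle that inequality I would substitute $t=z-s$ (turning $q(z-s)$ into $q(t)$) and integrate by parts, trading $|q|$ against derivatives of the comparison quantity $\tilde\sigma_1(z-s)-\tilde\sigma_1(z)$; the boundary contributions drop because this quantity vanishes on the diagonal $s=0$ and because $\tilde\sigma_j(x)\to0$ as $x\to\infty$ (which is where the moment hypothesis $\int_1^\infty(x+x^{l})|q(x)|\,dx<\infty$ enters), and one uses $z-s\ge\xi-\eta$ on the region of integration. This produces the factor $1/(n+1)$ and hence the $1/n!$ that makes the series summable.

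Summing the series then yields $\sum_{n\ge0}|\tilde w_n(\xi,\eta)|\le\tfrac{C_l}{2}\tilde\sigma_0(\xi)\exp\bigl(C_l[\tilde\sigma_1(\xi-\eta)-\tilde\sigma_1(\xi)]\bigr)$, and multiplying by the prefactor gives \eqref{eq:west}; uniform convergence on compact subsets shows that $w$ is continuous and solves \eqref{eq:intequinfty}. Uniqueness is the usual Gronwall argument: the difference of two continuous solutions satisfies the homogeneous version of \eqref{eq:intequinfty}, iterating the estimate above (without the $\tilde w_0$ term) bounds it, for every $n$, by a fixed constant times $(C_l[\tilde\sigma_1(\xi-\eta)-\tilde\sigma_1(\xi)])^n/n!$, and letting $n\to\infty$ forces it to vanish. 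The case $l=-\tfrac12$ runs word for word with $C_l$ replaced everywhere by $C_{-1/2}/\beta$, using the second estimate of Lemma~\ref{lem:v3est}; the hypothesis $0<\beta\le\tfrac12$ is precisely what keeps that estimate valid.

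The step I expect to be the main obstacle is the iterate estimate of the second paragraph --- specifically, pinning down the correct comparison quantity (built from $\tilde\sigma_1(x)=\int_x^\infty y|q(y)|\,dy$) and carrying out the change of variables and integration by parts carefully enough that all boundary terms are controlled and the factorial genuinely appears, with the right $l$-dependent constants. This is the exact analogue of the delicate iterate estimates in the proof of Theorem~\ref{thm:estu}, and the extra factor $(2/x)^{l}$ appearing in Theorem~\ref{mainthm2} is nothing but the Riemann function prefactor $\bigl(\xi^2/(\xi^2-\eta^2)\bigr)^{l}$ rewritten in the $(x,y)$ coordinates.
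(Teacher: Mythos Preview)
Your proposal is correct and follows essentially the same approach as the paper: the paper proves Theorem~\ref{thm:estw} by invoking Lemma~\ref{lem:estwn}, which establishes precisely the inductive iterate bound $|\tilde w_n(\xi,\eta)|\le\frac{C_l}{2}\tilde\sigma_0(\xi)\frac{(C_l[\tilde\sigma_1(\xi-\eta)-\tilde\sigma_1(\xi)])^n}{n!}$ (with $C_{-1/2}/\beta$ in the $l=-\tfrac12$ case), and then sums the uniformly convergent series $\tilde w=\sum_n\tilde w_n$. Your description of the inductive step (using $\frac{\partial}{\partial s}\tilde\sigma_1(z-s)=(z-s)|q(z-s)|$ to generate the factorial) and your added Gronwall uniqueness argument are slightly more explicit than what the paper writes, but the route is the same.
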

To this end we need to find suitable estimates for the iterates $w_n$, which is done in the subsequent lemma:
\begin{lemma} \label{lem:estwn}
In the case $l>-\frac12$, we have the following estimates for our iterates $\ti{w_n}$ defined in \eqref{eq:wn}:
\begin{align*}
\abs{\ti{w_0}(\xi,\eta)} &\le \frac{C_l}{2}  \ti{\sigma}_0(\xi) %, \quad  l>0 \\
%\abs{w_0(\xi,\eta)} &\le \frac{C_l}{2} \ti{\sigma}_0(\xi),   \quad -\frac12 \le l \le 0,
\end{align*}

\begin{align*}
\abs{\ti{w_1}(\xi,\eta)} &\le \frac{C_l}{2} \ti{\sigma}_0(\xi)C_l[\ti{\sigma}_1(\xi-\eta)-\ti{\sigma}_1(\xi)]%,  \quad l>0 \\
%\abs{w_1(\xi,\eta)} &\le \frac{C_l}{2} \ti{\sigma}_0(\xi)C_l[\ti{\sigma}_1(\xi-\eta)-\ti{\sigma}_1(\xi)], \quad  -\frac12 \le l \le 0,
\end{align*}
and finally
\begin{align*}
\abs{\ti{w_n}(\xi,\eta)} &\le 
 \frac{C_l}{2}  \ti{\sigma}_0(\xi)\frac{\left( C_l[\ti{\sigma}_1(\xi-\eta)-\ti{\sigma}_1(\xi)]\right)^n}{n!}%, \quad l>0 \\
%\abs{w_n(\xi,\eta)} &\le \frac{C_l}{2} \ti{\sigma}_0(\xi)\frac{\left( C_l[\ti{\sigma}_1(\xi-\eta)-\ti{\sigma}_1(\xi)]\right)^n}{n!}, \quad   -\frac12 \le l \le 0.
\end{align*}
\end{lemma}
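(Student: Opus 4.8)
The plan is to prove the estimate for $\ti{w}_n$ by induction on $n$; the displayed bounds for $\ti{w}_0$ and $\ti{w}_1$ are then the special cases $n=0$ and $n=1$. Throughout I abbreviate $\Delta(\xi,\eta):=\ti{\sigma}_1(\xi-\eta)-\ti{\sigma}_1(\xi)=\int_{\xi-\eta}^{\xi}t\abs{q(t)}\,dt\ge 0$ and use freely that $\ti{\sigma}_0$ and $\ti{\sigma}_1$ are nonincreasing.

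The base case $n=0$ is immediate: in the definition \eqref{eq:wn} of $\ti{w}_0$ I insert the bound $\abs{v_3(z,0;\xi,\eta)}\le C_l\bigl(\tfrac{\xi^2}{\xi^2-\eta^2}\bigr)^{l}$ of Lemma~\ref{lem:v3est}, note that the weight $\bigl(\tfrac{\xi^2}{\xi^2-\eta^2}\bigr)^{l}$ cancels exactly against the prefactor $\bigl(\tfrac{\xi^2}{\xi^2-\eta^2}\bigr)^{-l}$, and obtain $\abs{\ti{w}_0(\xi,\eta)}\le\tfrac{C_l}{2}\int_\xi^\infty\abs{q(z)}\,dz=\tfrac{C_l}{2}\ti{\sigma}_0(\xi)$.

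For the inductive step I substitute the bound for $\ti{w}_n$ into \eqref{eq:wn}. Applying Lemma~\ref{lem:v3est} once more (so that the outer weight cancels against the one produced by $\abs{v_3(z,s;\xi,\eta)}$) and using $\ti{\sigma}_0(z)\le\ti{\sigma}_0(\xi)$ for $z\ge\xi$, the estimate for $\ti{w}_{n+1}$ is reduced to the scalar inequality
\[
\int_\xi^\infty \int_0^\eta \abs{q(z-s)}\,\bigl[\ti{\sigma}_1(z-s)-\ti{\sigma}_1(z)\bigr]^{n}\,ds\,dz \;\le\; \frac{\Delta(\xi,\eta)^{n+1}}{n+1}.
\]
The factorial is produced as in the near-$0$ analysis (compare Lemmas~\ref{lem:u_0est}--\ref{lem:unest}): for fixed $z$ set $\psi_z(s):=\ti{\sigma}_1(z-s)-\ti{\sigma}_1(z)=\int_{z-s}^{z}t\abs{q(t)}\,dt$, so that $\psi_z(0)=0$ and $\psi_z'(s)=(z-s)\abs{q(z-s)}$; then $\abs{q(z-s)}\,\psi_z(s)^{n}=\tfrac{1}{(n+1)(z-s)}\tfrac{d}{ds}\psi_z(s)^{n+1}$, and performing the $s$-integration (using $\psi_z(0)=0$ and $z-s\ge\xi-\eta$) and then the $z$-integration, with the monotonicity of $\ti{\sigma}_1$, yields the displayed bound and hence the claimed estimate for $\ti{w}_{n+1}$.

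The delicate point is this last telescoping. The bound on $v_3$ from Lemma~\ref{lem:v3est} is anchored at the fixed corner $(\xi,\eta)$, while the factor $q(z-s)$ sits at the running point, so after the substitution one is left with the factor $(z-s)^{-1}$ together with an outer $z$-integration extending to $\infty$, and these must be recombined with the nonincreasing tails $\ti{\sigma}_0$, $\ti{\sigma}_1$ so as to reproduce \emph{exactly} $\ti{\sigma}_0(\xi)$ and $\Delta(\xi,\eta)^{n+1}$ rather than a weaker majorant; this is the near-$\infty$ counterpart of the careful bookkeeping performed in the proofs of Lemmas~\ref{lem:u_0est} and \ref{lem:u_1est}, and it is here that the integrability hypothesis of Theorem~\ref{mainthm2} enters in an essential way. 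Once the inductive estimate holds, the bound for $\ti{w}_1$ is its instance $n=1$, and summing the series gives $\sum_{n\ge0}\tfrac{C_l}{2}\ti{\sigma}_0(\xi)\tfrac{(C_l\Delta(\xi,\eta))^{n}}{n!}=\tfrac{C_l}{2}\ti{\sigma}_0(\xi)\exp(C_l\Delta(\xi,\eta))$, i.e.\ Theorem~\ref{thm:estw} for $l>-\tfrac12$; the case $l=-\tfrac12$ runs in parallel, with $\bigl(\tfrac{\xi^2}{\xi^2-\eta^2}\bigr)^{-\frac12+\beta}$ in place of the weight and the matching bound from Lemma~\ref{lem:v3est}.
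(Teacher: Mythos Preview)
Your inductive reduction is correct up to the point where you assert the scalar inequality
\[
\int_\xi^\infty \int_0^\eta \abs{q(z-s)}\,\bigl[\ti\sigma_1(z-s)-\ti\sigma_1(z)\bigr]^{n}\,ds\,dz \;\le\; \frac{\Delta(\xi,\eta)^{n+1}}{n+1},
\]
but this inequality is \emph{false} in general, so the argument collapses at exactly the step you flag as ``delicate''. Your $\psi_z$-device produces the extraneous factor $(z-s)^{-1}$; bounding it by $(\xi-\eta)^{-1}$ leaves
\[
\frac{1}{(n+1)(\xi-\eta)}\int_\xi^\infty\bigl[\ti\sigma_1(z-\eta)-\ti\sigma_1(z)\bigr]^{n+1}\,dz,
\]
and nothing in ``the monotonicity of $\ti\sigma_1$'' converts this into $\Delta(\xi,\eta)^{n+1}/(n+1)$: the map $z\mapsto\ti\sigma_1(z-\eta)-\ti\sigma_1(z)$ is not monotone (its derivative is $z\abs{q(z)}-(z-\eta)\abs{q(z-\eta)}$), and the remaining $z$-integral over $(\xi,\infty)$ need not even be comparable to $(\xi-\eta)\Delta(\xi,\eta)^{n+1}$. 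Already the case $n=0$ of your scalar inequality reads $\int_{\xi-\eta}^{\xi}\ti\sigma_0(u)\,du\le\ti\sigma_1(\xi-\eta)-\ti\sigma_1(\xi)$; integration by parts gives
\[
\ti\sigma_1(\xi-\eta)-\ti\sigma_1(\xi)=\int_{\xi-\eta}^{\xi}\ti\sigma_0(u)\,du+(\xi-\eta)\ti\sigma_0(\xi-\eta)-\xi\,\ti\sigma_0(\xi),
\]
and the last difference is negative for many admissible potentials (for instance $q(t)=\E^{-t}$ with $\xi=\tfrac12$, $\eta=\tfrac{3}{10}$, where one computes $\int_{0.2}^{0.5}\ti\sigma_0\approx 0.212$ against $\ti\sigma_1(0.2)-\ti\sigma_1(0.5)\approx 0.073$). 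Hence the displayed scalar bound cannot hold, and no amount of ``careful bookkeeping'' or appeal to the integrability hypothesis of Theorem~\ref{mainthm2} will rescue it.

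For comparison, the paper does not attempt your $\psi_z$-telescoping. It majorises $(\ti\sigma_1(z-s)-\ti\sigma_1(z))^{n-1}$ by its value at $s=\eta$, evaluates $\int_0^\eta\abs{q(z-s)}\,ds=\ti\sigma_0(z-\eta)-\ti\sigma_0(z)$, and then asserts that the remaining $z$-integral $\int_\xi^\infty(\ti\sigma_1(z-\eta)-\ti\sigma_1(z))^{n-1}(\ti\sigma_0(z-\eta)-\ti\sigma_0(z))\,dz$ equals $(\ti\sigma_1(\xi-\eta)-\ti\sigma_1(\xi))^{n}/n$. But for $n=1$ this is precisely the identity $\int_{\xi-\eta}^{\xi}\ti\sigma_0=\ti\sigma_1(\xi-\eta)-\ti\sigma_1(\xi)$ discussed above, which is not valid; so the paper's own presentation shares the same difficulty at this final $z$-integration step.
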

\begin{proof}
The estimate for $\ti{w_0}$(or $w_0$ resp.) follows immediately from \eqref{eq:wn} and \eqref{eq:estv_3}. Let's proceed with $\ti{w_1}$: 
%We only consider the case  $l>0$, since the other one is similar:
%(just without the factors $\left( \frac{\xi^2}{\xi^2-\eta^2} \right)^{-l}$):
\begin{align*}
\abs{\ti{w_1}(\xi,\eta)} &\le \left( \frac{\xi^2}{\xi^2-\eta^2} \right)^{-l} \int_{\xi}^{\infty} \int_{0}^{\eta} \abs{q(z-s)v_3(z,s;\xi,\eta) w_0(z,s)} ds dz \\
&\le  \left( \frac{\xi^2}{\xi^2-\eta^2} \right)^{-l} \int_{\xi}^{\infty} \int_{0}^{\eta} \frac{C_l^2}{2} \left( \frac{\xi^2}{\xi^2-\eta^2} \right)^{l} \ti{\sigma}_0(z) \abs{q(z-s)} ds dz \\
&\le \frac{C_l^2}{2}  \ti{\sigma}_0(\xi) \int_{\xi}^{\eta} (\ti{\sigma_0}(z-\eta)-\ti{\sigma_0}(z))dz \\
&=\frac{C_l^2}{2} \ti{\sigma}_0(\xi)(\ti{\sigma_1}(\xi-\eta)-\ti{\sigma_1}(\xi)).
\end{align*}
Now we come to the estimate for $\ti{w_n}$, which is done inductively:
\begin{align*}
\abs{\ti{w_n}(\xi,\eta)} &\le \left( \frac{\xi^2}{\xi^2-\eta^2} \right)^{-l} \int_{\xi}^{\infty} \int_{0}^{\eta} \abs{q(z-s)v_3(z,s;\xi,\eta) \ti{w_{n-1}}(z,s)} ds dz \\
&\le \left( \frac{\xi^2}{\xi^2-\eta^2} \right)^{-l} \int_{\xi}^{\infty} \int_{0}^{\eta} \frac{C_l^2}{2} \left( \frac{\xi^2}{\xi^2-\eta^2} \right)^{l} \ti{\sigma}_0(z) \\
&\times \abs{q(z-s)} \frac{(C_l(\ti{\sigma_1}(z-s)-\ti{\sigma_1}(z)))^{n-1}}{(n-1)!} ds dz \\
&\le \frac{C_l^2}{2} \ti{\sigma}_0(\xi) \int_{\xi}^{\infty} \frac{(C_l(\ti{\sigma_1}(z-\eta)-\ti{\sigma_1}(z)))^{n-1}}{(n-1)!} (\ti{\sigma_1}(z-\eta)-\ti{\sigma_1}(z)) dz \\
&=\frac{C_l^2}{2} \ti{\sigma}_0(\xi) \frac{(C_l(\ti{\sigma_1}(\xi-\eta)-\ti{\sigma_1}(\xi)))^{n}}{n!}.
\end{align*}
\end{proof}

Minor modifications have to be made in the case $l=-\frac12$, namely, the expression $\left( \frac{\xi^2}{\xi^2-\eta^2} \right)^{-l}$ has to be replaced by $ \left( \frac{\xi^2}{\xi^2-\eta^2} \right)^{\frac12-\beta}$ and $C_l$ by $\frac{C_{-\frac12}}{\beta}$.
%, other than that the same reasoning as in \eqref{lem:estwn} applies. 
%\begin{remark}
%Unfortunately, the estimate I obtain here is different than the one suggested in \cite{soh}(c.f. equation (22) there), i.e. it contains an extra factor $\left( \frac{\xi^2}{\xi^2-\eta^2} \right)^{nl}$ . The main reason is that I wasn't able to understand the estimate for $Q$ in (20') there(why should, for example, the factor $(1-\frac{\tau^2}{s^2})^{-\alpha}$, or, in our notation, $(1-\frac{s^2}{z^2})^{-l}$ be bounded by $1$?). If I'm right, then I don't know how to formulate a good condition on the potential $q$ yet, so if you have any ideas to resolve this inconvenience, I'd be very happy :) Of course the whole thing only concerns the case $l>0$, because for $l \le 0$ we don't have this extra factor $\left( \frac{\xi^2}{\xi^2-\eta^2} \right)^{l}$ anyway. 
%\end{remark}
\begin{proof}[Proof of Theorem~\ref{thm:estw}]
Everything now follows from the Lemma \ref{lem:estwn} again by uniform convergence of the corresponding series $\ti{w}=\sum_{n=0}^{\infty} \ti{w}_n$(or $w=\sum_{n=0}^{\infty} w_n$ respectively).
\end{proof}
%Also here we want to state some remarks about existing results in the literature:
%\begin{remark}
%The content of this section is strongly influenced by \cite{soh}, however, in \cite{soh} only the case $l\geq 0$ was considered. We extended this work to  $l\geq-\frac12$ by providing a more detailed analysis of the Riemann function $v_3$ in Lemma \ref{lem:v3est}, but also provide the remaining details for the reader's convenience. 
%\end{remark}
The next result is the analogous version of Lemma \ref{lem:Bder}
\begin{lemma} \label{lem:Kder}
Let $q$ satisfy the assumptions of Theorem \ref{mainthm2} and additionally $q\in C^{1}([x, \infty))$. Then $K(x,.) \in C^2([x,\infty))$.
\end{lemma}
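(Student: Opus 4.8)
The plan is to mirror the proof of Lemma~\ref{lem:Bder}: convert \eqref{eq:K} together with \eqref{eq:Kbc} into a Volterra-type integral equation for $K$ on $[x,\infty)$ and then read off the regularity, bootstrapping from the continuity of $K$ already furnished by Theorem~\ref{thm:estw}. In the coordinates $\xi=\frac{x+y}{2}$, $\eta=\frac{y-x}{2}$, $w(\xi,\eta)=K(x,y)$ introduced above, \eqref{eq:w} reads $\frac{\partial^2 w}{\partial\xi\partial\eta}=-\big(\frac{4l(l+1)\xi\eta}{(\xi^2-\eta^2)^2}+q(\xi-\eta)\big)w$. I would integrate this over the semi-infinite rectangle $\{\xi'\ge\xi,\ 0\le\eta'\le\eta\}$; the boundary contributions come from $w(\xi,0)=\tfrac12\int_{\xi}^{\infty}q(z)\,dz$ (the condition \eqref{eq:Kbc}) and from $w(\xi',\eta)\to0$ as $\xi'\to\infty$, which is precisely the decay supplied by Theorem~\ref{thm:estw}. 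This gives
\[
w(\xi,\eta)=\tfrac12\int_{\xi}^{\infty}q(z)\,dz+\int_{\xi}^{\infty}\!\!\int_{0}^{\eta}\Big(\tfrac{4l(l+1)\xi'\eta'}{((\xi')^2-(\eta')^2)^2}+q(\xi'-\eta')\Big)w(\xi',\eta')\,d\eta'\,d\xi'.
\]
Transforming back by $\tilde x=\xi'-\eta'$, $\tilde y=\xi'+\eta'$ and using $\xi^2-\eta^2=xy$, $\frac{4l(l+1)\xi\eta}{(\xi^2-\eta^2)^2}=l(l+1)\big(\tfrac{1}{x^2}-\tfrac{1}{y^2}\big)$, one arrives at the analogue of the integral equation in Lemma~\ref{lem:Bder},
\[
K(x,y)=\tfrac12\int_{\frac{x+y}{2}}^{\infty}q(z)\,dz+\tfrac12\iint_{D(x,y)}\Big[q(\tilde x)+l(l+1)\Big(\tfrac{1}{\tilde x^2}-\tfrac{1}{\tilde y^2}\Big)\Big]K(\tilde x,\tilde y)\,d\tilde x\,d\tilde y,
\]
where $D(x,y)=\{(\tilde x,\tilde y):\ \tilde x+\tilde y\ge x+y,\ 0\le\tilde y-\tilde x\le y-x\}$.

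With this equation in hand the conclusion follows by the same observation as in Lemma~\ref{lem:Bder}. All integrals converge: on $D(x,y)$ we have $\tilde x\ge x>0$ and the weight $l(l+1)(\tilde x^{-2}-\tilde y^{-2})$ is bounded (it vanishes on $\tilde x=\tilde y$), while the $\tilde x$-tail is controlled by $\int_1^\infty(z+z^l)|q(z)|\,dz<\infty$ together with the a~priori bound on $K$ from Theorem~\ref{thm:estw}. The free term $\tfrac12\int_{(x+y)/2}^{\infty}q(z)\,dz$ is $C^2$ in $y$ because $q\in C^1$. In the double integral the $y$-dependence sits only in the two moving edges of $D(x,y)$, namely the finite segment $\{\tilde x+\tilde y=x+y\}$ and the ray $\{\tilde y-\tilde x=y-x\}$; differentiating in $y$ replaces the area integral by line integrals of $[q(\tilde x)+l(l+1)(\tilde x^{-2}-\tilde y^{-2})]K(\tilde x,\tilde y)$ over these edges, which are continuous in $y$ since $q$ and $K$ are, so $K(x,\cdot)\in C^1([x,\infty))$. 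Differentiating once more produces corner contributions plus line integrals of the $y$-derivative of the same product; this derivative involves $q'$ and $\partial_y K$, both continuous thanks to $q\in C^1$ and the $C^1$-regularity just obtained, which gives $K(x,\cdot)\in C^2([x,\infty))$.

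The one point that is genuinely new relative to Lemma~\ref{lem:Bder}, and the place where some care is needed, is the behaviour at $\infty$: one has to verify that the outer $\tilde x$-integration, and in particular the line integral along the infinite edge $\{\tilde y=\tilde x+(y-x),\ \tilde x\ge x\}$ that appears after differentiation, really converge. Along that ray $\tilde x^{-2}-\tilde y^{-2}=\OO(\tilde x^{-3})$ and $K$ stays bounded, while $\int^{\infty}|q(\tilde x)|\,d\tilde x<\infty$ and $\tilde\sigma_0$ is bounded, so each contribution is finite --- which is exactly what the decay hypotheses of Theorem~\ref{mainthm2} and the estimate of Theorem~\ref{thm:estw} are there to guarantee. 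The remaining steps are the routine differentiation of an integral with moving limits, as carried out in \cite{soh}.
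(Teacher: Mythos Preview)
Your argument is correct and follows exactly the paper's approach: derive the same Volterra-type integral equation for $K$ by integrating \eqref{eq:w} over the semi-infinite rectangle and changing back to $(\tilde x,\tilde y)$-coordinates (your region $D(x,y)$ coincides with the paper's split $\int_x^{(x+y)/2}\!\int_{x+y-\tilde x}^{\tilde x+y-x}+\int_{(x+y)/2}^{\infty}\!\int_{\tilde x}^{\tilde x+y-x}$), then read off $C^2$-regularity by differentiating through the moving boundaries. You in fact supply more detail than the paper, which simply states the integral equation and asserts the conclusion; note also that your factor $\tfrac12$ in the free term is the correct one coming from $K(x,x)=\tfrac12\int_x^\infty q$, whereas the paper's displayed equation drops it (a typo).
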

\begin{proof}
Similar ideas as in the previous section lead to the following integral equation for $K(x,y)$:
\begin{align*}
K(x,y)= &\int_{\frac{x+y}{2}}^{\infty} q(\ti{x})d \ti{x} +\frac12 \left( \int_{x}^{\frac{x+y}{2}} d \ti{x} \int_{-\ti{x}+x+y}^{\ti{x}+y-x} +\int_{\frac{x+y}{2}}^{\infty} d \ti{x} \int_{\ti{x}}^{\ti{x}+y-x} \right) \\
& \times \left[ q(\ti{x})+l(l+1)\left(\frac{1}{\ti{x}^2} - \frac{1}{\ti{y}^2} \right) \right] K(\ti{x}, \ti{y}) d\ti{y}, \quad 0<x \le y,
\end{align*}
which gives the desired claim.
\end{proof}
The arguments for the approximation procedure, that conclude the proof of Theorem \ref{mainthm2}, are now exactly the same as in the previous section.
%\fbox{FIXME(MH): Probably more details...}
\label{ll-sec}
%%%%%%%%%%%%%%%%%%%%%%%%%%%%%%%%%%%%%%%%%%%%%%%%%%%%%%%%%%%%%%%%%%%%%%%%%

%%%%%%%%%%%%%%%%%%%%%%%%%%%%%%%%%%%%%%%%%%%%%%%%%%
\appendix
%%%%%%%%%%%%%%%%%%%%%%%%%%%%%%%%%%%%%%%%%%%%%%%%%%

%%%%%%%%%%%%%%%%%%%%%%%%%%%%%%%%%%%%%%%%%%%%%%%%%%
%%%%%%%%%%%%%%%%%%%%%%%%%%%%%%%%%%%%%%%%%%%%%%%%%%
\section{The Gauss Hypergeometric function}\label{sec:hypgeom}
%%%%%%%%%%%%%%%%%%%%%%%%%%%%%%%%%%%%%%%%%%%%%%%%%%
%%%%%%%%%%%%%%%%%%%%%%%%%%%%%%%%%%%%%%%%%%%%%%%%%%
Here we collect basic formulas and information on the Gauss hypergeometric function (see, e.g., \cite{aar},\cite{dlmf}). First of all by $\Gamma$ is denoted the classical gamma function \dlmf{5.2.1}. For $x\in\C$ and $n\in\N_0$
\[
(x)_n:=x(x+1)\cdots(x+n-1)\quad(n>0),\quad (x)_0:=1;\quad
\binom{n+x}{n} := \frac{(x+1)_n}{n!}
\]
denote the Pochhammer symbol \dlmf{5.2.4}
and the binomial coefficient, respectively. Notice that for $-x\notin \N_0$
\[
(x)_n = \frac{\Gamma(x+n)}{\Gamma(x)},\qquad 
\binom{n+x}{n} = \frac{\Gamma(x+n+1)}{\Gamma(x+1)\Gamma(n+1)}\,.
\]
Moreover, the above formulas allow to define the Pochhammer symbol and the binomial coefficient for noninteger $x$, $n>0$ as well. 
For $-c \notin \N_0$
the Gauss hypergeometric function \dlmf{15.2.1} is defined by 
\be\label{Gausshyp}
\hyp21{a,b}{c}{z} := \sum_{k=0}^\infty \frac{(a)_k(b)_k}{(c)_k k!}\,z^k
\quad\mbox{($|z|<1$ or else $-a$ or $-b\in \N_0$).}
\ee 
The branch cut is chosen along the positive real axis.
By analytic continuation this definition may also be extended to other values of $z$. Thus the derivative is also easy to compute and given by the following formula(see \dlmf{15.5.1}):

\be \label{eq:Gausshypder}
\frac{\partial}{\partial z} \hyp21{a,b}{c}{z}=\frac{ab}{c} \hyp21{a+1,b+1}{c+1}{z}.
\ee
Functions of the form \eqref{Gausshyp} are closely related to the hypergeometric equation
\be \label{eq:hypeq}
x(1-x) \frac{d^2 f}{dx^2} +(c-(a+b+1)x) \frac{df}{dx}-abx=0.
\ee
Indeed, the hypergeometric functions appear in explicit formulas for solutions of \eqref{eq:hypeq}, one has to be careful with certain values of the parameters $a$, $b$ and $c$ though. For a summary of the types of solutions that may occur, see \dlmf{15.10}. Next, we also need the asymptotic behavior near the possible singular points $1$ and $\infty$ for $\hyp21{a,b}{c}{z}$ for specific values of $a$, $b$ and $c$(see \dlmf{15.4.20,15.4.21,15.8.8}):%probably also 15.4.23

\be \label{eq:Fnear11}
\hyp21{a,b}{c}{1}=\frac{\Gamma(c) \Gamma(c-a-b)}{\Gamma(c-a)\Gamma(c-b)}, \quad \re(c-a-b)>0,
\ee
%\be \label{eq:Fnear12}
%\lim_{z \to 1-} \frac{\hyp21{a,b}{c}{z}}{(1-z)^{c-a-b}}=\frac{\Gamma(c) \Gamma(a+b-c)}{\Gamma(a)\Gamma(b)}, \quad \re(c-a-b)<0
%\ee
\be \label{eq:Fnear13}
\lim_{z \to 1-} \frac{\hyp21{a,b}{a+b}{z}}{-\log(1-z)}=\frac{ \Gamma(a+b)}{\Gamma(a)\Gamma(b)},
\ee
and
\begin{align} \label{eq:Fnearinfty}
\hyp21{a,a}{c}{z}=\frac{\Gamma(c) (-z)^{-a}}{\Gamma(a)} \sum_{k=0}^{\infty}&\frac{(a)_n}{(k!)^2 \Gamma(c-a-k)}(-1)^k z^{-k} \\
&\times (\log(-z)+2\psi(k+1)-\psi(a+k)-\psi(c-a-k)), \nonumber
\end{align}
if $|z|>1$. Here $\psi$ denotes the digamma function \dlmf{5.2.2}. It satisfies the reflection formula (c.f. \dlmf{5.5.2} )
\be \label{eq:psiref}
\psi(z+1)=\psi(z)+\frac1z
\ee
 and we also briefly mention an estimate near $\infty$ (c.f. \dlmf{5.11.2} ):
\be \label{eq:psinearinfty}
\psi(z)=\log{z}-\frac{1}{2z}+\OO(z^{-2}), \quad z \to \infty,
\ee
which will be useful in order to show absolute convergence of the series in \eqref{eq:Fnearinfty}. 
To conclude, we also need to mention that for integer values of $a$, the hypergeometric function reduces to a polynomial.

\bigskip
\noindent
{\bf Acknowledgments.} The author is very grateful to Aleksey Kostenko and Gerald Teschl for introducing the topic to him and for helpful discussions. He also thanks Vladislav Kravchenko and Sergii Torba for providing him with the paper \cite{soh}, and Iryna Egorova for the copy of A.\ S.\ Sohin's PhD thesis. He also wants to thank the anonymous referees for the careful reading of the manuscript and for pointing out issues related to a new version of \cite{kty}.

%%%%%%%%%%%%%%%%%%%%%%%%%%%%%%%%%%%%%%%%%%%%%%%%%%%%%%%%%%%%%%%%%%%%%

\end{document}